\documentclass[a4paper,reqno,10pt]{amsart}

\raggedbottom
\hfuzz3pt
\usepackage{epsf,graphicx,epsfig}
\usepackage{amscd}
\usepackage[all,knot]{xy}
\usepackage{amsmath,latexsym,amssymb,amsthm}
\usepackage[nospace,noadjust]{cite}
\usepackage{textcomp}
\usepackage{dsfont}
\usepackage{setspace,cite}
\usepackage{lscape,fancyhdr,fancybox}
\usepackage{stmaryrd}
\usepackage{tikz}
\usepackage{cancel}
\usetikzlibrary{shapes,arrows,decorations.markings}
\setlength{\unitlength}{0.4in}

\usepackage{graphicx}

\usepackage{color}
\usepackage{url}
\usepackage{enumerate}
\usepackage[mathscr]{euscript}

\setlength{\topmargin}{-0.5in}
\setlength{\textheight}{9.8in}
\setlength{\oddsidemargin}{-0.1in}
\setlength{\evensidemargin}{-.1in}
\setlength{\textwidth}{6.4in}

  \theoremstyle{plain}

\swapnumbers
    \newtheorem{thm}{Theorem}[section]
    \newtheorem{prop}[thm]{Proposition}

    \newtheorem{subsec}[thm]{}
\theoremstyle{definition}
    \newtheorem{defn}[thm]{Definition}
        \newtheorem{remark}[thm]{Remark}
    \newtheorem{exam}[thm]{Example}

\theoremstyle{remark}

\setcounter{tocdepth}{1}

\title{}
\author{}
\date{}
\usepackage{amssymb}

\usepackage{hyperref}

\hypersetup{
	colorlinks,
	citecolor=blue,
	filecolor=black,
	linkcolor=blue,
	urlcolor=black
}

\begin{document}
 
\title[]{Associative-Yamaguti algebras}

\author{Apurba Das}
\address{Department of Mathematics,
Indian Institute of Technology, Kharagpur 721302, West Bengal, India.}
\email{apurbadas348@gmail.com, apurbadas348@maths.iitkgp.ac.in}

\begin{abstract}
In this paper, we first introduce associative-Yamaguti algebras as the associative analogue of Lie-Yamaguti algebras. Associative algebras, reductive associative algebras and associative triple systems of the first kind form subclasses of associative-Yamaguti algebras. Any diassociative algebra canonically provides an associative-Yamaguti algebra structure. We confirm that any associative-Yamaguti algebra admits an enveloping associative algebra (i.e., it can be obtained from a reductive associative algebra). We show that a suitable skew-symmetrization of an associative-Yamaguti algebra gives rise to a Lie-Yamaguti algebra structure. Next, we define the $(2,3)$-cohomology group of an associative-Yamaguti algebra to study formal one-parameter deformations and abelian extensions. Later, we consider Yamaguti multiplications on a nonsymmetric operad as a generalization of associative-Yamaguti algebras. This notion further leads us to introduce dendriform-Yamaguti algebras, which are splitting objects for associative-Yamaguti algebras. Finally, we consider relative Rota-Baxter operators on associative-Yamaguti algebras to establish close relationships with dendriform-Yamaguti algebras.
\end{abstract}

\maketitle

\medskip

\begin{center}
     {\sf 2020 MSC classification:} 17A30, 17A40, 17B60, 16E99, 17A36.

      {\sf Keywords:} Lie-Yamaguti algebras, Associative-Yamaguti algebras, Cohomology, Yamaguti multiplications, Dendriform-Yamaguti algebras, Relative Rota-Baxter operators. 
\end{center}

\thispagestyle{empty}

\tableofcontents

\section{Introduction}
The concept of Lie-Yamaguti algebras can be implicitly traced back to the work of Nomizu \cite{nomizu} in the invariant affine connections on homogeneous spaces. Inspired by his work, K. Yamaguti introduced general Lie triple systems together with their representations and cohomology theory \cite{yamaguti0,yamaguti}. Later, Kinyon and Weinstein \cite{kinyon} observed that general Lie triple systems, which they called {\em Lie-Yamaguti algebras} in their paper, can be constructed from Leibniz algebras. Additionally, they examine the {\em enveloping Lie algebras} of Lie-Yamaguti algebras. Although Lie algebras and Lie triple systems are the first examples of Lie-Yamaguti algebras, this new structure exhibits greater complexity than either Lie algebras or Lie triple systems. This fact was observed in the works \cite{benito0,benito1,takahashi}, where the authors explained irreducible Lie-Yamaguti algebras and some other Lie-Yamaguti algebras with specific enveloping Lie algebras. Since then, Lie-Yamaguti algebras have attracted much attention and have been extensively studied. In \cite{zhang-li}, the authors considered deformations and extensions of Lie-Yamaguti algebras in terms of the cohomology introduced in \cite{yamaguti}. In \cite{sheng-complex}, the authors analyzed product structures and complex structures on Lie-Yamaguti algebras using Nijenhuis operators. On the other hand, Rota-Baxter operators and symplectic structures on Lie-Yamaguti algebras have been studied in \cite{sheng-zhao}. Recently, homomorphisms \cite{mondal-saha}, crossed homomorphisms \cite{sun-chen}, automorphisms \cite{goswami} and nilpotency \cite{hani} of Lie-Yamaguti algebras have also been investigated.

\medskip

Besides Lie triple systems and Lie-Yamaguti algebras, there exist well-known triple systems in the world of associative algebras, Jordan algebras and Leibniz algebras. See \cite{bremner, carlsson, jacobson, lister, meyberg} for more details about associative triple systems and their close connections with Lie, Jordan and Leibniz triple systems. Although {\em associative triple systems} are extensively studied in the literature from various points of view, the notion of an associative-Yamaguti algebra is not yet considered. In this paper, we first propose the definition of an {\em associative-Yamaguti algebra} and provide some interesting examples (cf. Definition \ref{defn-assy} and subsequent examples). Of course, associative algebras, reductive associative algebras and associative triple systems of the first kind are the first examples of associative-Yamaguti algebras. On the other side, diassociative algebra has its origin in the study of Leibniz algebras \cite{loday}. We show that any diassociative algebra yields an associative-Yamaguti algebra structure (cf. Theorem \ref{diass-assy}). We also confirm that any associative-Yamaguti algebra admits an enveloping associative algebra (cf. Theorem \ref{thm-env-ass}). That is, it can be embedded into a bigger reductive associative algebra so that the induced associative-Yamaguti algebra structure coincides with the given one. This concept generalizes the enveloping Lie algebra of a given Lie-Yamaguti algebra \cite{kinyon}. Next, we show that a suitable skew-symmetrization of an associative-Yamaguti algebra gives rise to a Lie-Yamaguti algebra (cf. Theorem \ref{assy-liey}) and this construction is compatible with the well-known construction of a Lie algebra from an associative algebra. In turn, we obtain the commutative diagram given in (\ref{ass-diagram}). This is also compatible with the constructions of an associative-Yamaguti algebra from a diassociative algebra and of a Lie-Yamaguti algebra from a Leibniz algebra. More precisely, the diagram (\ref{diass-diagram}) commutes.

\medskip

Next, we focus on the cohomology of an associative-Yamaguti algebra and its potential applications. We begin with {\em representations} of an associative-Yamaguti algebra and construct the corresponding semidirect product. Subsequently, we define the {\em $(2,3)$-th cohomology group} $\mathcal{H}^{(2,3)} (A, M)$ of an associative-Yamaguti algebra $A$ with coefficients in a given representation $M$. Currently, we don't have the full cochain complex of an associative-Yamaguti algebra that generalizes the $(2,3)$-th cohomology group. Then we consider {\em formal deformations} of an associative-Yamaguti algebra $A$ and show that the corresponding infinitesimals are precisely $(2,3)$-cocycles of $A$ with coefficients in the adjoint representation (cf. Theorem \ref{thm-inf-co}). Moreover, the infinitesimals corresponding to equivalent formal deformations give rise to the same element in the $(2,3)$-th cohomology group $\mathcal{H}^{(2,3)} (A, A)$. Finally, we also consider {\em abelian extensions} of an associative-Yamaguti algebra $A$ by a given representation $M$ and show that the set of all isomorphism classes of such abelian extensions can be classified by the $(2,3)$-th cohomology group $\mathcal{H}^{(2,3)} (A, M)$ (cf. Theorem \ref{thm-abelian}).

\medskip

In \cite{gers-voro}, Gerstenhaber and Voronov considered {\em multiplications} on nonsymmetric operads as a generalization of associative algebras. Such multiplications are found to be very useful in the study of associative algebras and other Loday-type algebras (i.e., those algebras in which the variables follow the same order in all the defining identities). It can be observed that an associative-Yamaguti algebra can be regarded as a Loday-type algebra. Inspired by this, we consider {\em Yamaguti multiplications} on a nonsymmetric operad as a generalization of associative-Yamaguti algebras. More precisely, there is a one-one correspondence between Yamaguti multiplications on the endomorphism operad $\mathrm{End}_A$ and associative-Yamaguti algebra structures on the vector space $A$ (cf. Theorem \ref{thm-ym-assy}). As an interesting Loday-type algebra, the notion of {\em dendriform algebra} \cite{loday} got much attention due to its connections with Rota-Baxter algebras \cite{guo}. In \cite{das-dend}, the present author explicitly wrote the nonsymmetric operad $\mathrm{Dend}_A$ whose multiplications correspond to dendriform algebra structures on the vector space $A$. In the present paper, we define the notion of a \textit{dendriform-Yamaguti algebra} as a Yamaguti multiplication on the nonsymmetric operad $\mathrm{Dend}_A$. We explicitly write the identities satisfied in a given dendriform-Yamaguti algebra (cf. Definition \ref{defn-dendy}). Any dendriform algebras and {\em dendriform triple systems} are examples of dendriform-Yamaguti algebras. We observe that a dendriform-Yamaguti algebra splits an associative-Yamaguti algebra (cf. Theorem \ref{total-assy}). To study dendriform-Yamaguti algebras further, we introduce {\em relative Rota-Baxter operators} on associative-Yamaguti algebras. We show that such an operator induces a dendriform-Yamaguti algebra structure and, conversely, any dendriform-Yamaguti algebra arises in this way (cf. Theorems \ref{thm-relrba-dendy} and \ref{thm-last}).

\medskip

The paper is organized as follows. In Section \ref{sec2}, we recall some preliminaries on Lie-Yamaguti algebras. Then we introduce and study associative-Yamaguti algebras in Section \ref{sec3}. Their relations with diassociative algebras and Lie-Yamaguti algebras are also investigated. In Section \ref{sec4}, we introduce representations and the $(2,3)$-th cohomology group of an associative-Yamaguti algebra. We use this cohomology group in Section \ref{sec5} to study formal deformations and abelian extensions of an associative-Yamaguti algebra. Finally, we discuss Yamaguti multiplications, dendriform-Yamaguti algebras and relative Rota-Baxter operators in Section \ref{sec6}.

\medskip

All vector spaces, linear maps, unadorned tensor products and wedge products are over a field ${\bf k}$ of characteristic $0$.

\section{Some background on Lie-Yamaguti algebras}\label{sec2}

In this section, we recall Lie-Yamaguti algebras, some basic examples and representations of Lie-Yamaguti algebras \cite{jacobson,kinyon,meyberg,yamaguti0,yamaguti}. 

\begin{defn}
    A {\bf Lie-Yamaguti algebra} is a triple $(\mathfrak{g}, [~, ~], \llbracket ~, ~, ~\rrbracket)$ consisting of a vector space $\mathfrak{g}$ equipped with linear maps $[~, ~] : \wedge^2 \mathfrak{g} \rightarrow \mathfrak{g}$ and $\llbracket ~, ~, ~\rrbracket : \wedge^2 \mathfrak{g} \otimes \mathfrak{g} \rightarrow \mathfrak{g}$ that satisfy the following set of identities:
\begin{equation}\label{ly1}\tag{LY1}
    [ [x, y], z] + [[y, z], x] + [[ z, x], y] + \llbracket x, y, z \rrbracket + \llbracket y, z, x \rrbracket + \llbracket z, x, y \rrbracket = 0,
\end{equation}
\begin{equation}\label{ly2}\tag{LY2}
    \llbracket [x, y], z, w \rrbracket + \llbracket [y, z], x , w \rrbracket + \llbracket [z, x], y, w \rrbracket = 0,
\end{equation}
\begin{equation}\label{ly3}\tag{LY3}
   \llbracket x, y , [z, w ] \rrbracket = [ \llbracket x, y, z \rrbracket,  w] + [z, \llbracket x, y, w \rrbracket ],  
\end{equation}
\begin{equation}\label{ly4}\tag{LY4}
   \llbracket x, y, \llbracket z, w, t \rrbracket \rrbracket = \llbracket \llbracket x, y, z \rrbracket, w, t \rrbracket + \llbracket z, \llbracket x, y, w \rrbracket, t \rrbracket + \llbracket z, w, \llbracket x, y, t \rrbracket \rrbracket,  
\end{equation}
for all $x, y, z, w, t \in \mathfrak{g}$.
\end{defn}

Let $(\mathfrak{g}, [~,~], \llbracket ~, ~, ~ \rrbracket)$ and $(\mathfrak{g}', [~,~]', \llbracket ~, ~, ~ \rrbracket')$ be two Lie-Yamaguti algebras. A {\bf homomorphism} of Lie-Yamaguti algebras from $\mathfrak{g}$ to $\mathfrak{g}'$ is a linear map $\varphi : \mathfrak{g} \rightarrow \mathfrak{g}'$ satisfying
\begin{align*}
    \varphi ([x, y]) = [ \varphi (x), \varphi (y)]'  \quad \text{and} \quad \varphi ( \llbracket x, y, z \rrbracket) = \llbracket \varphi (x), \varphi (y), \varphi (z) \rrbracket', \text{ for all } x, y, z \in \mathfrak{g}.
\end{align*}
The collection of all Lie-Yamaguti algebras and homomorphisms between them is a category, denoted by {\bf LieY}.

\begin{exam}\label{exam-lie-liey}
    Let $(\mathfrak{g}, [~,~])$ be a Lie algebra. Then $(\mathfrak{g}, [~,~], \llbracket ~, ~, ~ \rrbracket)$ is a Lie-Yamaguti algebra, where
    \begin{align*}
        \llbracket x, y, z \rrbracket := [ [x, y], z], \text{ for } x, y, z \in \mathfrak{g}.
    \end{align*}
\end{exam}

\begin{exam}
    A {\bf Lie triple system} is a pair $(\mathfrak{g}, \llbracket ~, ~, ~ \rrbracket)$ of a vector space $\mathfrak{g}$ with a linear map  $\llbracket ~, ~, ~\rrbracket : \wedge^2 \mathfrak{g} \otimes \mathfrak{g} \rightarrow \mathfrak{g}$ satisfying
    \begin{align*}
        & \qquad \qquad \llbracket x, y, z \rrbracket + \llbracket y, z, x \rrbracket + \llbracket z, x, y \rrbracket = 0, \\
        & \llbracket x, y, \llbracket z, w, t \rrbracket \rrbracket = \llbracket \llbracket x, y, z \rrbracket, w, t \rrbracket + \llbracket z, \llbracket x, y, w \rrbracket, t \rrbracket + \llbracket z, w, \llbracket x, y, t \rrbracket \rrbracket, 
    \end{align*}
    for all $x, y, z , w, t \in \mathfrak{g}$. It follows that a Lie triple system $(\mathfrak{g}, \llbracket ~, ~, ~ \rrbracket)$ can be regarded as a Lie-Yamaguti algebra with the trivial bracket operation $[~,~]$.
\end{exam}

\begin{exam}\label{exam-leib-liey}
    Let $(\mathfrak{l}, \lfloor ~, ~ \rfloor)$ be a left Leibniz algebra. That is, $\mathfrak{l}$ is a vector space equipped with a linear operation $\lfloor ~, ~ \rfloor : \mathfrak{l} \otimes \mathfrak{l} \rightarrow \mathfrak{l}$ satisfying 
    \begin{align*}
        \lfloor x, \lfloor y, z \rfloor \rfloor = \lfloor \lfloor x, y \rfloor, z \rfloor + \lfloor y, \lfloor x, z \rfloor \rfloor, \text{ for } x, y, z \in \mathfrak{l}.
    \end{align*}
    For a left Leibniz algebra $(\mathfrak{l}, \lfloor ~, ~ \rfloor)$, we define linear operations $[~,~] : \wedge^2 \mathfrak{l} \rightarrow \mathfrak{l}$ and $\llbracket ~, ~, ~ \rrbracket : \wedge^2 \mathfrak{l} \otimes \mathfrak{l} \rightarrow \mathfrak{l}$ by
    \begin{align*}
        [x, y] := \lfloor x, y \rfloor - \lfloor y, x \rfloor ~~~ \text{ and } ~~~ \llbracket x, y, z \rrbracket := - \lfloor \lfloor x, y \rfloor, z \rfloor,
    \end{align*}
    for $x, y, z \in \mathfrak{l}$. Then $(\mathfrak{l}, [~,~], \llbracket ~, ~, ~ \rrbracket)$ is a Lie-Yamaguti algebra. 
\end{exam}

Let $(\mathfrak{g}, [~,~], \llbracket ~, ~, ~ \rrbracket)$ be a Lie-Yamaguti algebra. Recall that \cite{yamaguti} a {\bf representation} of this Lie-Yamaguti algebra is given by a triple $(V, \rho, \nu)$ of a vector space $V$ endowed with linear maps $\rho : \mathfrak{g} \rightarrow \mathrm{End}(V)$ and $\nu : \mathfrak{g} \otimes \mathfrak{g} \rightarrow \mathrm{End}(V)$ that satisfy the following conditions: 
\begin{equation*}
    \nu ([x, y], z) - \nu (x, z) \rho(y) + \nu (y, z) \rho(x) = 0,
\end{equation*}
\begin{equation*}
     \nu (x, [y, z]) - \rho (y) \nu (x, z) + \rho (z) \nu (x, y) = 0,
\end{equation*}
\begin{equation*}
    \rho ( \llbracket  x, y, z \rrbracket) = [ D_{\rho, \mu} (x, y), \rho (z)],
\end{equation*}
\begin{equation*}
    \nu(z, w) \nu (x, y) - \nu (y, w) \nu(x, z) - \nu (x, \llbracket  y, z, w \rrbracket) + D_{\rho, \nu} (y, z) \nu (x, w) = 0,
\end{equation*}
\begin{equation*}
     \nu ( \llbracket x, y, z \rrbracket, w) + \nu (z, \llbracket x, y, w \rrbracket) = [D_{\rho, \nu} (x, y),  \nu(z, w)],
\end{equation*}
for all $x, y, z, w \in \mathfrak{g}$. Here the linear map $D_{\rho, \nu} : \mathfrak{g} \otimes \mathfrak{g} \rightarrow \mathrm{End} (V)$ is given by
\begin{align*}
    D_{\rho, \nu} (x, y):= [\rho(x), \rho (y)] - \rho ([x, y]) - \nu({x, y}) + \nu (y, x), \text{ for } x, y \in \mathfrak{g}.
\end{align*}

The following result is a characterization of representations of a Lie-Yamaguti algebra \cite{zhang-li}.
\begin{prop}\label{prop-zhang}
    Let $(\mathfrak{g}, [~,~], \llbracket ~, ~, ~ \rrbracket)$ be a Lie-Yamaguti algebra. Suppose $V$ is a vector space endowed with linear maps $\rho : \mathfrak{g} \rightarrow \mathrm{End}(V)$ and $\nu : \mathfrak{g} \otimes \mathfrak{g} \rightarrow \mathrm{End}(V)$. Then $(V, \rho, \nu)$ is a representation of the given Lie-Yamaguti algebra if and only if the following operations
    \begin{align*}
        [(x, u), (y, v)]_\ltimes :=~& ([x, y], \rho(x) v - \rho(y) u), \\
        \llbracket (x, u), (y, v), (z, w) \rrbracket_\ltimes :=~& \big( \llbracket x, y, z \rrbracket , D_{\rho, \nu} (x, y) w + \nu (y, z) u - \nu (x, z) v \big),
    \end{align*}
    for $(x, u), (y, v), (z, w) \in \mathfrak{g} \oplus V$, makes the triple $(\mathfrak{g} \oplus V, [~,~]_\ltimes, \llbracket ~, ~, ~ \rrbracket_\ltimes)$ into a Lie-Yamaguti algebra.
\end{prop}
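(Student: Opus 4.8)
The plan is to verify directly that the two operations $[~,~]_\ltimes$ and $\llbracket ~,~,~\rrbracket_\ltimes$ satisfy the four axioms \eqref{ly1}--\eqref{ly4} on $\mathfrak{g}\oplus V$ exactly when the five displayed conditions defining a representation hold. Throughout I would write a general element as $(x,u)$ and exploit that both operations are built ``diagonally'': the $\mathfrak{g}$-component of $[~,~]_\ltimes$ and of $\llbracket~,~,~\rrbracket_\ltimes$ is simply the corresponding operation of $\mathfrak{g}$, independent of the $V$-entries. Consequently, once I expand any of \eqref{ly1}--\eqref{ly4} for the semidirect product and project onto $\mathfrak{g}$, the resulting identity is precisely the same axiom for $(\mathfrak{g},[~,~],\llbracket~,~,~\rrbracket)$, which holds by hypothesis. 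Thus the whole content of the statement lives in the $V$-components, and the proof reduces to analysing those.

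As a preliminary step I would record that $D_{\rho,\nu}$ is skew-symmetric, i.e. $D_{\rho,\nu}(y,x)=-D_{\rho,\nu}(x,y)$, which is immediate from its definition. This guarantees that $[~,~]_\ltimes$ is skew-symmetric and that $\llbracket~,~,~\rrbracket_\ltimes$ is skew-symmetric in its first two slots, so that both operations are genuinely defined on $\wedge^2(\mathfrak{g}\oplus V)$ and on $\wedge^2(\mathfrak{g}\oplus V)\otimes(\mathfrak{g}\oplus V)$, respectively.

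The core computation is then to expand the $V$-component of each axiom and, since the $V$-entries of the distinct arguments are arbitrary and independent, to separate it according to which argument's $V$-entry it is linear in. I expect the matching to come out as follows. The $V$-component of \eqref{ly1} vanishes identically: collecting the coefficient of each $V$-entry and substituting the definition of $D_{\rho,\nu}$, everything cancels, so \eqref{ly1} carries no information beyond that definition. The $V$-component of \eqref{ly2}, read off on the entries of the first three arguments, reproduces the first representation condition $\nu([x,y],z)-\nu(x,z)\rho(y)+\nu(y,z)\rho(x)=0$ (in renamed variables), while the coefficient of the fourth entry is a formal consequence of that condition together with \eqref{ly1} and the third condition. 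Expanding \eqref{ly3} yields the second condition $\nu(x,[y,z])-\rho(y)\nu(x,z)+\rho(z)\nu(x,y)=0$ from the entries of the first two arguments and the third condition $\rho(\llbracket x,y,z\rrbracket)=[D_{\rho,\nu}(x,y),\rho(z)]$ from the entries of the last two. Finally, expanding the five-argument identity \eqref{ly4} produces the fourth condition from the entries of the acting pair and the fifth condition from the entries of the first two members of the acted-on triple; the coefficient of the last entry gives the derivation-type identity $[D_{\rho,\nu}(x,y),D_{\rho,\nu}(z,w)]=D_{\rho,\nu}(\llbracket x,y,z\rrbracket,w)+D_{\rho,\nu}(z,\llbracket x,y,w\rrbracket)$, which I would check is entailed by the third and fifth conditions together with \eqref{ly3} for $\mathfrak{g}$, hence is not an independent requirement. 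For the ``only if'' direction one simply reads the relevant conditions off these coefficients; for the ``if'' direction one substitutes the five conditions and checks that every coefficient, including the redundant ones, vanishes.

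The main obstacle is organisational rather than conceptual: the expansion of \eqref{ly4} is multilinear in five variables and produces the bulk of the terms, so the delicate points are tracking the skew-symmetrisations hidden inside $D_{\rho,\nu}$ and verifying that the ``extra'' coefficients in \eqref{ly2} and \eqref{ly4}---those that do not directly name one of the five axioms---collapse to zero once the axioms and the ambient identities \eqref{ly1} and \eqref{ly3} of $\mathfrak{g}$ are invoked. Establishing those two redundancies is where I would spend the most care, as it is precisely what makes the five conditions both necessary and sufficient.
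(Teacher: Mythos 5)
Your plan is sound, and it is worth noting that the paper itself gives no proof of this proposition --- it is quoted from the reference [zhang-li] --- so the only thing to compare against is the standard argument, which is exactly the direct verification you outline. Your coefficient bookkeeping is correct: the $\mathfrak{g}$-components reduce to the axioms of $\mathfrak{g}$; the $V$-component of \eqref{ly1} vanishes identically by the definition of $D_{\rho,\nu}$; \eqref{ly2} yields the first representation condition from the $V$-entries of its first three arguments; \eqref{ly3} yields the second and third conditions; \eqref{ly4} yields the fourth condition (entries of the acting pair) and the fifth (first two entries of the acted-on triple); and the coefficient of the last entry in \eqref{ly4} is the derivation identity $[D_{\rho,\nu}(x,y),D_{\rho,\nu}(z,w)]=D_{\rho,\nu}(\llbracket x,y,z\rrbracket,w)+D_{\rho,\nu}(z,\llbracket x,y,w\rrbracket)$, which does indeed follow from the third and fifth conditions together with \eqref{ly3} for $\mathfrak{g}$ and the Jacobi identity in $\mathrm{End}(V)$.

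One small inaccuracy: the redundant coefficient in \eqref{ly2} (that of the fourth argument's $V$-entry) is $\sum_{\mathrm{cyc}} D_{\rho,\nu}([x,y],z)=0$, and eliminating the terms $\nu(z,[x,y])$ appearing in its expansion requires the \emph{second} representation condition, not only the first and third together with \eqref{ly1} as you state; no other term in that sum carries $\nu$ with a bracket in its second slot, so condition two cannot be avoided there. This does not affect the equivalence --- in the ``if'' direction all five conditions are available anyway --- but it is the kind of attribution you flagged as delicate, and it should be corrected in a written-out proof.
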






\section{Associative-Yamaguti algebras}\label{sec3}
In this section, we introduce associative-Yamaguti algebras and provide a list of examples. In particular, any diassociative algebra yields an associative-Yamaguti algebra structure. We show that any associative-Yamaguti algebra has an enveloping associative algebra. This implies that any associative-Yamaguti algebra can be obtained from a reductive associative algebra. We end this section by showing that a suitable skew-symmetrization of an associative-Yamaguti algebra gives rise to a Lie-Yamaguti algebra structure.

\begin{defn}\label{defn-assy}
    An {\bf associative-Yamaguti algebra} (or simply an {\bf Yamaguti algebra}) is a quadruple $(A, ~\!  \cdot ~ \! , \{ ~, ~, ~\}, \{ \! \! \{ ~, ~, ~\} \! \! \})$ consisting of a vector space $A$ equipped with linear operations
    \begin{align*}
         \cdot : A \otimes A \rightarrow  A \quad \text{ and } \quad  \{ ~, ~, ~\}, \{ \! \! \{ ~, ~, ~\} \! \! \} : A \otimes A \otimes A \rightarrow A
    \end{align*}
    such that for all $a, b, c, d, e \in A$, the following set of identities are hold:
    \begin{equation}\label{ay1} \tag{AY1}
        (a \cdot b ) \cdot c -  a \cdot (b \cdot c) + \{ a, b, c \} - \{ \! \! \{ a, b, c \} \! \! \} = 0,
    \end{equation}
    \begin{equation}\label{ay2} \tag{AY2}
        \{ a \cdot b, c, d \} = \{ a, b \cdot c, d \},
    \end{equation}
    \begin{equation}\label{ay3} \tag{AY3}
        \{ a, b, c \cdot d \} = \{ a, b, c \} \cdot d,
    \end{equation}
    \begin{equation}\label{ay4} \tag{AY4}
        \{ \! \! \{ a \cdot b, c, d\} \! \! \} = a \cdot \{ \! \! \{ b, c, d \} \! \! \},
    \end{equation}
    \begin{equation}\label{ay5} \tag{AY5}
        \{ \! \! \{ a, b \cdot c, d\} \! \! \} = \{ \! \! \{ a, b, c \cdot d \} \! \! \} ,
    \end{equation}
    \begin{equation}\label{ay6} \tag{AY6}
        a \cdot \{ b, c, d \} = \{ \! \! \{ a, b, c \} \! \! \} \cdot d,
    \end{equation}
    \begin{equation}\label{ay7} \tag{AY7}
         \{ \{ a, b, c \}, d, e \} = \{ a, \{ \! \! \{ b, c, d\} \! \! \}, e \} = \{ a, b, \{ c, d , e \} \},
    \end{equation}
    \begin{equation}\label{ay8} \tag{AY8}
        \{ a, \{ b, c , d \} , e \} = \{ \{ \! \! \{ a, b, c\} \! \! \}, d, e \},
    \end{equation}
    \begin{equation}\label{ay9} \tag{AY9}
        \{ \! \! \{   \{ \! \! \{ a, b, c\} \! \! \}, d, e    \} \! \! \} = \{ \! \! \{ a, \{ b, c, d \}, e \} \! \! \} = \{ \!  \! \{ a, b, \{ \! \! \{ c, d, e\} \! \! \} \} \! \! \},
    \end{equation}
    \begin{equation}\label{ay10} \tag{AY10}
         \{ \! \! \{ a, \{ \! \! \{ b, c, d \} \! \! \}, e \} \! \! \}  = \{ \! \! \{ a, b, \{c , d, e \} \} \! \! \},
    \end{equation}
    \begin{equation}\label{ay11}\tag{AY11}
        \{ a, b, \{ \! \! \{ c, d, e \} \! \! \} \} = \{ \! \! \{ \{ a, b, c \}, d, e \} \! \! \}.
    \end{equation}
\end{defn}

\medskip

Let $(A, ~ \! \cdot ~ \! , \{ ~, ~, ~ \}, \{ \! \! \{ ~, ~ , ~ \} \! \! \})$ be an associative-Yamaguti algebra. Define linear maps $ \sigma,  \tau : A \otimes A \rightarrow \mathrm{End}(A)$ by
\begin{align*}
    \sigma (a, b) = \sigma_{a, b} := \{ a, b, - \} ~ \mathrm{~and~} ~  \tau (a, b) = \tau_{a, b} := \{ \! \! \{ -, a, b \} \! \! \}, \text{ for } a, b \in A.
\end{align*}
Then the identities (\ref{ay1})-(\ref{ay11}) can be rephrased as
\begin{align*}
    & \qquad \qquad \qquad (a \cdot b) \cdot c - a \cdot (b \cdot c) + \sigma_{a, b} (c) - \tau_{b, c} (a) = 0, \\
    & \begin{cases}
        \sigma_{a \cdot b , c} = \sigma_{a, b \cdot c}~ \!, \\
        \sigma_{a , b} (c \cdot d) = \sigma_{a, b} (c) \cdot d~ \!, \\
        \tau_{a, b} (c \cdot d) = c \cdot \tau_{a, b} (d)~ \!,\\
        \tau_{a \cdot b , c} = \tau_{a, b \cdot c}~ \!, \\
        a \cdot \sigma_{b, c} (d)= \tau_{b, c}(a) \cdot d~ \!,
    \end{cases}
    \qquad \quad
    \begin{cases}
        \sigma_{a, b} \sigma_{c, d} = \sigma_{\sigma_{a, b} (c), d} = \sigma_{a, \tau_{c, d} (b)}~ \!,\\
          \sigma_{a, \sigma_{b, c} (d)} = \sigma_{\tau_{b, c} (a) , d} ~ \!, \\
           \tau_{a, b} \tau_{c, d} = \tau_{c, \tau_{a, b} (d)} = \tau_{\sigma_{c, d} (a), b} ~ \!, \\
            \tau_{\tau_{a, b} (c) , d} = \tau_{c, \sigma_{a, b} (d)} ~ \! ,\\
            \sigma_{a, b} \tau_{c, d} = \tau_{c, d} \sigma_{a, b} ~ \!,
    \end{cases}
\end{align*}
for all $a, b, c, d \in A$. Thus, an associative-Yamaguti algebra {\em can be equivalently described} by a quadruple $(A, ~ \! \cdot ~ \!, \sigma, \tau)$ of a vector space $A$ with linear operations $\cdot: A \otimes A \rightarrow A$ and $\sigma, \tau: A \otimes A \rightarrow \mathrm{End}(A)$ that satisfy the 11 conditions given above.

Let $(A, ~ \! \cdot ~ \! , \{ ~, ~, ~ \}, \{ \! \! \{ ~, ~ , ~ \} \! \! \})$ and $(A', ~ \! \cdot' ~ \! , \{ ~, ~, ~ \}', \{ \! \! \{ ~, ~ , ~ \} \! \! \}')$ be two associative-Yamaguti algebras. A {\bf homomorphism} of associative-Yamaguti algebras from $A$ to $A'$ is a linear map $\varphi: A \rightarrow A'$ that preserves the corresponding operations. Associative-Yamaguti algebras and homomorphisms between them form a category, which we denote by ${\bf AssY}.$

\begin{exam}\label{exam-ass-assy}
    Let $(A, ~ \! \cdot ~ \!)$ be an associative algebra. Then the tuple $(A, ~ \! \cdot ~ \! , \{ ~, ~, ~ \}, \{ \! \! \{ ~, ~ , ~ \} \! \! \})$ is an associative-Yamaguti algebra, where
    \begin{align*}
        \{ a, b, c \} = \{ \! \! \{ a, b, c \} \! \! \} = (a \cdot b) \cdot c = a \cdot (b \cdot c), \text{ for all } a, b, c \in A.
    \end{align*}
\end{exam}

\begin{exam}\label{reductive}
     Let $(A, ~ \! \cdot ~ \!)$ be an associative algebra. A {\em reductive decomposition} of $(A, ~ \! \cdot ~ \!)$ is a direct sum decomposition $A= A_0 \oplus A_1$ of the underlying vector space for which $A_0 \cdot A_0 \subset A_0$, $A_0 \cdot A_1  \subset A_1$ and $A_1 \cdot A_0 \subset A_1$. In this case, $(A, ~ \! \cdot ~ \!)$ is called a {\em reductive associative algebra}. For any $a,b, c \in A_1$, we define
     \begin{align*}
         a \bullet b = \mathrm{pr}_{A_1} (a \cdot b), \quad \{a , b, c \} = (\mathrm{pr}_{A_0} (a \cdot b)) \cdot c \quad \text{ and } \quad  \{ \! \! \{ a, b, c \} \! \! \} = a \cdot (\mathrm{pr}_{A_0} (b \cdot c)).
     \end{align*}
     Here $\mathrm{pr}_{A_0}$ and $\mathrm{pr}_{A_1}$ denote the projections onto the subspaces $A_0$ and $A_1$, respectively. Then the quadruple $(A_1, ~ \! \bullet ~ \!, \{ ~, ~ , ~ \}, \{ \! \! \{ ~, ~ , ~ \} \! \! \})$ is an associative-Yamaguti algebra.
\end{exam}

\begin{exam}\label{exam-asst}
    An {\bf associative triple system} (of first kind) \cite{lister,carlsson} is a pair $(A, \{ ~, ~, ~ \})$ of a vector space $A$ with a linear operation $\{ ~, ~ , ~ \} : A \otimes A \otimes A \rightarrow A$ satisfying
    \begin{align}\label{asst-iden}
        \{ \{ a, b, c \}, d, e \} = \{ a, \{ b, c, d \}, e \} = \{a, b , \{ c, d, e \} \}, \text{ for all } a, b, c, d, e \in A.
    \end{align}
    It follows that an associative triple system $(A, \{ ~, ~, ~ \} )$ can be regarded as an associative-Yamaguti algebra $(A, ~ \! \cdot ~ \!, \{ ~, ~ , ~ \}, \{ \! \! \{ ~, ~ , ~ \} \! \! \})$ with trivial $\cdot$ and $ \{ \! \! \{ ~, ~ , ~ \} \! \! \} = \{ ~, ~, ~ \}$.
\end{exam}

\begin{exam}
    Let $(A, ~ \! \cdot ~ \!)$ be an associative algebra. For any $a \otimes a', b \otimes b', c \otimes c' \in A \otimes A$, we define
    \begin{align*}
        &(a \otimes a') \bullet (b \otimes b') = a \cdot a' \cdot b \otimes b' ~ \! + ~ \! a \otimes a' \cdot b \cdot b',\\
        \{ a \otimes a', b \otimes b', c \otimes c'  \} =~& - a \cdot a' \cdot b \cdot b' \cdot c \otimes c' ~~~ \text{ and } ~~~  \{ \! \! \{ a \otimes a', b \otimes b', c \otimes c'  \}  \! \! \} = - a \otimes a' \cdot b \cdot b' \cdot c \cdot c'.
    \end{align*}
    Then $(A \otimes A, ~ \! \bullet ~ \! , \{ ~, ~, ~ \} , \{ \! \! \{ ~, ~ , ~ \} \! \! \})$ is an associative-Yamaguti algebra.
\end{exam}

\begin{exam}
    Let $(A, ~ \! \cdot ~ \!)$ be an associative algebra and $M$ be a bimodule over it. Then the tuple $(A \oplus M, ~ \! \bullet ~ \!, \{ ~, ~ , ~ \}, \{ \! \! \{ ~, ~ , ~ \} \! \! \})$ is an associative-Yamaguti algebra, where for $(a, u), (b, v) , (c, w) \in A \oplus M$,
    \begin{align*}
        & \qquad (a, u) \bullet (b, v) = (2a \cdot b ~ \! , ~ \! a \cdot v + u \cdot b), \\
        \{ (a, u) , (b, v) , (c, w) \} =~& - (a \cdot b \cdot c ~ \! , ~ \! a \cdot b \cdot w) ~~~ \text{ and } ~~~ \{ \! \! \{ (a, u) , (b, v) , (c, w) \} \! \! \} = - (a \cdot b \cdot c ~ \! , ~ \! u \cdot b \cdot c).
    \end{align*}
    Then $(A \oplus M , ~ \! \bullet ~ \! , \{ ~, ~ , ~ \}, \{ \! \! \{ ~, ~ ,~ \} \! \! \})$ is an associative-Yamaguti algebra.
\end{exam}

In the following result, we show that a diassociative algebra canonically gives rise to an associative-Yamaguti algebra. First, recall that a {\bf diassociative algebra} \cite{loday} is a triple $(D, \dashv , \vdash)$ consisting of a vector space $D$ with two linear operations $\dashv, \vdash : D \otimes D \rightarrow D$ that make $(D, \dashv)$ and $(D, \vdash)$ both into associative algebras satisfying additionally
\begin{align}\label{dialg-iden}
    a \dashv (b \dashv c) = a \dashv (b \vdash c), \qquad (a \vdash b) \dashv c = a \vdash (b \dashv c)  \quad \text{ and } \quad (a \vdash b) \vdash c = (a \dashv b) \vdash c,
\end{align}
for all $a, b, c \in D$. Let $(D, \dashv, \vdash)$ and $(D', \dashv', \vdash')$ be two diassociative algebras. A homomorphism from $D$ to $D'$ is a linear map $\varphi : D \rightarrow D'$ satisfying $\varphi (a \dashv b) = \varphi (a) \dashv' \varphi (b)$ and $\varphi (a \vdash b) = \varphi (a) \vdash' \varphi (b)$, for all $a, b \in D$. Let $(D, \dashv, \vdash)$ be a diassociative algebra. Then it has been shown by Loday that the pair $(D, \lfloor ~, ~ \rfloor)$ is a left Leibniz algebra, where
\begin{align}
    \lfloor a, b \rfloor := a \vdash b ~-~ b \dashv a, \text{ for } a, b \in D.
\end{align}

\begin{thm}\label{diass-assy}
    Let $(D, \dashv, \vdash)$ be a diassociative algebra. Then the quadruple $(D, ~\!  \cdot ~ \! , \{ ~, ~, ~\}, \{ \! \! \{ ~, ~, ~\} \! \! \})$ is an associative-Yamaguti algebra, where for $a, b, c \in D,$
    \begin{align}
        a \cdot b :=~& a \dashv b + a \vdash b, \label{diass-assy1}\\
        \{ a, b, c \} :=~& - (a \dashv b) \vdash c = - (a \vdash b) \vdash c = - a \vdash (b \vdash c), \label{diass-assy2}\\
        \{ \! \! \{ a, b, c \} \! \! \} :=~& - a \dashv (b \vdash c) = - a \dashv (b \dashv c) = - (a \dashv b) \dashv c. \label{diass-assy3}
    \end{align} 
    Moreover, if $(D, \dashv, \vdash)$ and $(D', \dashv', \vdash')$ are two diassociative algebras and $\varphi : D \rightarrow D'$ is a homomorphism between them, then $\varphi$ is also a homomorphism between the corresponding associative-Yamaguti algebras.
\end{thm}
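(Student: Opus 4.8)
The plan is to verify the eleven identities (\ref{ay1})--(\ref{ay11}) directly from the five structural relations of a diassociative algebra: associativity of $\dashv$, associativity of $\vdash$, and the three relations in (\ref{dialg-iden}). As a preliminary step I would confirm that the three expressions offered for $\{a,b,c\}$ in (\ref{diass-assy2}) agree --- $(a\dashv b)\vdash c = (a\vdash b)\vdash c$ is the third relation of (\ref{dialg-iden}) and $(a\vdash b)\vdash c = a\vdash(b\vdash c)$ is associativity of $\vdash$ --- and symmetrically for $\{\!\!\{a,b,c\}\!\!\}$ in (\ref{diass-assy3}), using the first relation of (\ref{dialg-iden}) together with associativity of $\dashv$.

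The organizing tool is the coherence (normal-form) principle for diassociative algebras: any iterated product of $x_1,\dots,x_n$ built from $\dashv$ and $\vdash$ depends only on the position of its distinguished ``pointer'' --- the entry reached by descending into the right factor at each outermost $\vdash$ and into the left factor at each outermost $\dashv$ --- and equals $x_1\vdash\cdots\vdash x_i\dashv\cdots\dashv x_n$ when the pointer is $x_i$. I would record this as a short lemma, proved by induction on $n$ using (\ref{dialg-iden}). It makes the two ternary operations transparent: $\{a,b,c\}=-\,a\vdash b\vdash c$ is the normal-form word with pointer $c$, and $\{\!\!\{a,b,c\}\!\!\}=-\,a\dashv b\dashv c$ is the word with pointer $a$.

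Granting the normal form, each axiom becomes a matter of matching pointers and signs. I would first settle (\ref{ay1}) by expanding $(a\cdot b)\cdot c$ and $a\cdot(b\cdot c)$ into four summands apiece; associativity of $\dashv$, associativity of $\vdash$, and the middle relation of (\ref{dialg-iden}) annihilate three pairs, leaving precisely $(a\dashv b)\vdash c-a\dashv(b\vdash c)=-\{a,b,c\}+\{\!\!\{a,b,c\}\!\!\}$. The binary--ternary axioms (\ref{ay2})--(\ref{ay6}) go the same way: after rewriting each $\cdot$ as $\dashv+\vdash$ and pushing operations through (\ref{dialg-iden}) and associativity, both sides collapse to one normal-form word --- for instance (\ref{ay3}) reduces to $a\vdash b\vdash(c\cdot d)=(a\vdash b\vdash c)\cdot d$, while the ``middle-shift'' axioms (\ref{ay2}) and (\ref{ay5}) each produce the same word on both sides, carrying a matching coefficient $2$.

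The five ternary--ternary identities (\ref{ay7})--(\ref{ay11}) are then handled by composing normal forms: each inner bracket is first written as a pure $\vdash$- or $\dashv$-word, its leading sign cancels the outer sign, and the outer operation is absorbed via (\ref{dialg-iden}) and associativity into a single fivefold product. Thus all three members of (\ref{ay7}) equal $a\vdash b\vdash c\vdash d\vdash e$, all three of (\ref{ay9}) equal $a\dashv b\dashv c\dashv d\dashv e$, and the mixed identities (\ref{ay8}), (\ref{ay10}), (\ref{ay11}) coincide once the pointer is tracked through the inner-then-outer composition. This tracking is the only place I anticipate genuine bookkeeping: when a pure $\dashv$-word is inserted into a $\vdash$-bracket (or vice versa), one must apply (\ref{dialg-iden}) to migrate the factors to the correct side before the pointer stabilises, as in (\ref{ay11}) where $b\vdash(c\dashv d\dashv e)=(b\vdash c)\dashv d\dashv e$ is required. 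Finally, the functoriality assertion is immediate: a diassociative homomorphism $\varphi$ preserves $\dashv$ and $\vdash$, hence it preserves $\cdot=\dashv+\vdash$ and both ternary operations, all of which are built solely from $\dashv$ and $\vdash$.
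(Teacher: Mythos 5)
Your proposal is correct, but it takes a genuinely different route from the paper's own proof. The paper verifies (\ref{ay1})--(\ref{ay11}) by brute force: each identity is expanded via (\ref{diass-assy1})--(\ref{diass-assy3}) and massaged term by term using associativity of $\dashv$, $\vdash$ and the relations (\ref{dialg-iden}), with the five-variable identities (\ref{ay7})--(\ref{ay11}) each handled by its own ad hoc chain of rewritings. You instead front-load all the work into one coherence lemma (essentially Loday's normal-form description of free diassociative algebras): every parenthesized $\dashv,\vdash$-monomial equals $x_1\vdash\cdots\vdash x_i\dashv\cdots\dashv x_n$, where $x_i$ is the pointer. Granting that lemma, each axiom reduces to matching pointers and signs, and your bookkeeping is accurate: both sides of (\ref{ay2}) and (\ref{ay5}) collapse to twice a single normal-form word, each of (\ref{ay3}), (\ref{ay4}), (\ref{ay6}) splits into two distinct pointers that match on the two sides, all three members of (\ref{ay7}) equal $a\vdash b\vdash c\vdash d\vdash e$, all three of (\ref{ay9}) equal $a\dashv b\dashv c\dashv d\dashv e$, and (\ref{ay8}), (\ref{ay10}), (\ref{ay11}) come out right once the pointer of the inserted inner word is propagated through the outer bracket. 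What your route buys is conceptual economy: the eleven identities stop being separate computations and become instances of one principle, which also explains why the signs in (\ref{diass-assy2}), (\ref{diass-assy3}) cooperate; it scales painlessly to the fivefold products where the paper's computations are heaviest. What it costs is that the coherence lemma itself must be proved carefully --- your induction on word length using (\ref{dialg-iden}) does go through, and the statement is standard (it is how Loday computes the free dialgebra), but it is a genuine piece of work the paper avoids by staying entirely self-contained and elementary. The functoriality assertion is handled identically in both arguments.
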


\begin{proof}
    For any $a, b, c \in D$, we see that
    \begin{align*}
        &(a \cdot b ) \cdot c - a \cdot ( b  \cdot c) + \{ a, b, c \} - \{ \! \! \{ a, b, c \} \! \! \}\\
        &= (a \dashv b + a \vdash b) \dashv c ~ +~  (a \dashv b + a \vdash b) \vdash c ~-~ a \dashv (b \dashv c + b \vdash c) ~-~ a \vdash (b \dashv c + b \vdash c) \\
        & \qquad \qquad ~-~ (a \dashv b) \vdash c ~+~ a \dashv (b \vdash c) ~=~ 0.
    \end{align*}
    Hence, the identity (\ref{ay1}) follows. Next, for any $a, b, c, d \in D$, we observe that
    \begin{align*}
        \{ a \cdot b, c, d \} = - \big(( a \dashv b + a \vdash b ) \dashv c \big) \vdash d =~& - \big( (a \dashv b) \dashv c + (a \vdash b)\dashv c \big) \vdash d \\
        =~& - ( a \vdash (b \vdash c) + a \vdash (b \dashv c)) \vdash d \quad (\text{by }(\ref{dialg-iden}))\\
        =~& - (a \vdash (b \cdot c)) \vdash d = \{ a, b \cdot c ,  d\},
    \end{align*}
    \begin{align*}
        \{ a, b, c \cdot d \} = - a \vdash \big(  b \vdash (c \dashv d + c \vdash d)   \big) =~& - a \vdash ( (b \vdash c) \dashv d) ~-~ a \vdash ( (b \vdash c) \vdash d) \\
        =~& - (a \vdash (b \vdash c)) \dashv d ~-~ (a \vdash (b \vdash c)) \vdash d \\
       =~& \{ a, b, c \} \dashv d  + \{ a, b, c \} \vdash d = \{ a, b, c \} \cdot d,
    \end{align*}
    \begin{align*}
        \{ \! \! \{ a \cdot b , c, d \} \! \! \} = - \big( (a \dashv b + a \vdash b) \dashv c   \big) \dashv d =~& - \big( a \dashv (b \dashv c) ~+~ a \vdash (b \dashv c)  \big) \dashv d \\
        =~& - a \dashv ( (b \dashv c) \dashv d) - a \vdash ( (b \dashv c) \dashv d) \\
        =~& a \dashv \{ \! \! \{ b, c, d \} \! \! \} ~+~ a \vdash \{ \! \! \{ b, c, d \} \! \! \} = a \cdot \{ \! \! \{ b, c, d \} \! \! \},
    \end{align*}
    \begin{align*}
         \{ \! \! \{ a, b \cdot c, d \} \! \! \} = - a \dashv \big(  (b \dashv c + b \vdash c) \dashv d  \big) =~& - a \dashv \big(  b \dashv (c \vdash d) ~+~ b \vdash (c \dashv d)  \big) \\
         =~& - a \dashv \big(   b \vdash (c \vdash d) ~+~ b \vdash (c \dashv d) \big) \\
         =~& - a \dashv (b \vdash (c \cdot d)) = \{ \! \! \{ a, b, c \cdot d \} \! \! \},
    \end{align*}
    \begin{align*}
        a \cdot \{ b, c, d \} = a \dashv \{ b, c, d \} + a \vdash \{ b, c, d \}  =~& - a \dashv ( (b \vdash c) \vdash d) - a \vdash ( (b \vdash c) \vdash d) \\
        =~& - (a \dashv (b \vdash c)) \dashv d ~-~ (a \dashv (b \vdash c)) \vdash d \quad (\text{by } (\ref{dialg-iden})) \\
        =~& \{ \! \! \{ a, b, c \} \! \! \} \dashv d ~+~ \{ \! \! \{ a, b, c \} \! \! \} \vdash d = \{ \! \! \{ a, b, c \} \! \! \} \cdot d.
    \end{align*}
    This verifies the identities (\ref{ay2})-(\ref{ay6}). Finally, for any $a, b, c, d, e \in D$,
    \begin{align*}
        \{ \{ a, b, c \}, d, e \} =  ( ( (a \vdash b) \vdash c) \vdash d) \vdash e = \begin{cases}
           = (a \vdash b) \vdash ( (c \vdash d) \vdash e) = \{ a, b , \{ c, d, e \} \}, \\\\
= ((a \vdash b) \vdash (c \vdash d)) \vdash e \\
= (( a \vdash b) \dashv (c \vdash d) ) \vdash e  \\
= \big( a \vdash (b \dashv ( c \vdash d)) \big) \vdash e \\
= - (a \vdash \{ \! \! \{ b, c, d \} \! \! \}) \vdash e = \{ a, \{ \! \! \{ b, c, d \} \! \! \} , e \},
        \end{cases}
    \end{align*}
    \begin{align*}
        \{ a, \{ b, c, d \} , e \} = \big( a \vdash (( b \vdash c) \vdash d)    \big) \vdash e =~& \big( (a \vdash  (b \vdash c)) \vdash d   \big) \vdash c \\
        =~& \big( (a \dashv (b \vdash c)) \vdash d   \big) \vdash e \\
        =~& - ( \{ \! \! \{ a, b, c \} \! \! \} \vdash d) \vdash e = \{ \{ \! \! \{ a, b, c \} \! \! \}, d, e \}, 
    \end{align*}
    \begin{align*}
        \{ \! \! \{     \{ \! \! \{ a, b, c \} \! \! \}, d, e \} \! \! \} = (a \dashv (b \dashv c)) \dashv (d \dashv e) = \begin{cases}
            = a \dashv \big(b \dashv (c \dashv (d \dashv e)) \big) = \{ \! \! \{ a, b , \{ \! \! \{ c, d, e \} \! \! \} \} \! \! \}, \\\\
            = a \dashv (  (b \dashv c) \dashv (d \dashv e) ) \\
            = a \dashv ( (b \dashv c) \vdash (d \dashv e)) \\
            = a \dashv \big(  (( b \dashv c) \vdash d) \dashv e \big) \\
            = - a \dashv (\{ b, c, d \} \dashv e) = \{ \! \! \{ a, \{ b, c, d \}, e \} \! \! \},
        \end{cases}
    \end{align*}
    \begin{align*}
         \{ \! \! \{ a, \{ \! \! \{ b, c, d \} \! \! \}, e \} \! \! \} = a \dashv \big( (b \dashv (c \dashv d)) \dashv e    \big) =~& a \dashv (    b \dashv (( c \dashv d ) \dashv e) ) \\
         =~& a \dashv ( b \dashv (( c \dashv d) \vdash e) ) \\
         =~& - a \dashv (b \dashv \{ c, d , e \} ) = \{ \! \! \{ a, b, \{ c, d, e \} \} \! \! \}
    \end{align*}
    \begin{align*}
         \{ a, b, \{ \! \! \{ c, d, e \} \! \! \} \} = (a \dashv b) \vdash ((c \dashv d) \dashv ) =~& (( a \dashv b) \vdash (c \dashv d) ) \dashv e \\
         =~& \big(  (( a \dashv b) \vdash c) \dashv d    \big) \dashv e \\
         =~& - ( \{ a, b, c \} \dashv d) \dashv e = \{ \! \! \{ \{ a, b, c \} , d , e \} \! \! \}.
    \end{align*}
    Hence the identities (\ref{ay7})-(\ref{ay11}) also hold. This proves the first part. The verification of the second part is simple.
\end{proof}

As a consequence of the above result, we obtain a functor $\mathcal{F} : {\bf Diass} \rightarrow {\bf AssY}$ from the category ${\bf Diass}$ of diassociative algebras to the category ${\bf AssY}$ of associative-Yamaguti algebras.

\begin{exam}
    Let $(A, ~ \! \cdot ~ \! )$ be an associative algebra. A linear map $P : A \rightarrow A$ is said to be an {\em averaging operator} on $(A, ~ \! \cdot ~ \!)$ if
    \begin{align*}
         P(a) \cdot P(b) = P (P(a) \cdot b) = P (a \cdot P(b)), \text{ for all } a, b \in A.
    \end{align*}
    In this case, the triple $(A, \dashv_P, \vdash_P)$ is a diassociative algebra, where $a \dashv_P b = a \cdot P(b)$ and $a \vdash_P b = P(a) \cdot b$, for all $a, b \in A$. As a result, the tuple $(A, ~ \! \bullet ~ \! , \{ ~, ~ , ~ \}, \{ \! \! \{ ~, ~, ~ \} \! \! \})$ is an associative-Yamaguti algebra, where
    \begin{align*}
        a \bullet b = P(a) \cdot b + a \cdot P(b), \quad \{ a, b, c \} = - P(a) \cdot P(b) \cdot c ~~~ \text{ and } ~~~ \{ \! \! \{ a, b, c \} \! \! \} = - a \cdot P(b) \cdot P(c), ~ \text{ for } a, b, c \in A. 
    \end{align*}
\end{exam}

\medskip

In the following, we aim to show that an associative-Yamaguti algebra always admits an enveloping associative algebra. To show this, we begin with the following result.

\begin{prop}
    Let $(A, ~ \! \cdot ~ \! , \{ ~, ~, ~ \}, \{ \! \! \{ ~, ~, ~ \} \! \! \})$ be an associative-Yamaguti algebra. Suppose $(B, *)$ is any associative algebra with linear maps $\triangleright : B \otimes A \rightarrow A$ and $\triangleleft : A \otimes B \rightarrow A$ that make $A$ into a bimodule over the associative algebra $(B, *)$, satisfying additionally
    \begin{align}\label{b-algebra}
        \xi \triangleright (a \cdot b) = ( \xi \triangleright a ) \cdot b,  \quad (a \cdot b ) \triangleleft \xi = a \cdot (b \triangleleft \xi), \quad a \cdot (\xi \triangleright b ) = (a \triangleleft \xi) \cdot b,
    \end{align}
    for all $a, b \in A$ and $\xi \in B$. Also, let $\Delta : A \otimes A \rightarrow B$ be a linear map that satisfies
    \begin{equation}\label{delta-cond1}
        (\Delta (a, b)) \triangleright c= \{ a, b, c \}, \quad a \triangleleft (\Delta (b, c)) = \{ \! \! \{ a, b, c \} \! \! \},
    \end{equation}
    \begin{equation}\label{delta-cond2}
        \xi * \Delta (a, b)= \Delta (\xi \triangleright a, b), \quad \Delta (a, b) * \xi = \Delta (a, b \triangleleft \xi), \quad \Delta (a, \xi \triangleright b ) = \Delta (a \triangleleft \xi, b),
    \end{equation}
    \begin{equation}\label{delta-cond3}
        \Delta (a \cdot b , c) = \Delta (a, b \cdot c),
    \end{equation}
    for all $a, b , c \in A$ and $\xi \in B$. Then $( B \oplus A, ~ \! \oast ~ \! )$ is a reductive associative algebra, where
    \begin{align}\label{env-product}
        (\xi, a) \oast (\eta,b) = \big(    \xi * \eta +\Delta (a, b) ~ \! , ~ \! \xi \triangleright b + a \triangleleft \eta + a \cdot b \big),
    \end{align}
    for $(\xi, a) , (\eta,b) \in B \oplus A$. Moreover, the induced associative-Yamaguti algebra structure on $A$ coincides with the prescribed one.
\end{prop}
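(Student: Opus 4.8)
The plan is to verify three things in turn: that the product $\oast$ is associative, that the splitting $B \oplus A = (B \oplus 0) \oplus (0 \oplus A)$ is a reductive decomposition, and that the induced associative-Yamaguti structure recovers the prescribed operations. For the reductive decomposition I would set $A_0 = B \oplus 0$ and $A_1 = 0 \oplus A$ and simply read off (\ref{env-product}). Since $\Delta$ is linear we have $\Delta(0,0) = \Delta(0,b) = \Delta(a,0) = 0$, hence $(\xi,0) \oast (\eta,0) = (\xi * \eta, 0) \in A_0$, while $(\xi,0) \oast (0,b) = (0, \xi \triangleright b)$ and $(0,a) \oast (\eta, 0) = (0, a \triangleleft \eta)$ both lie in $A_1$. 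This gives $A_0 \cdot A_0 \subset A_0$, $A_0 \cdot A_1 \subset A_1$ and $A_1 \cdot A_0 \subset A_1$, which are exactly the reductivity conditions of Example \ref{reductive}.

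The substance of the proof is associativity. I would expand $\big((\xi,a) \oast (\eta,b)\big) \oast (\zeta,c)$ and $(\xi,a) \oast \big((\eta,b) \oast (\zeta,c)\big)$ and compare their $B$-components and $A$-components separately. On the $B$-side, after distributing, each expression is built from $\xi * \eta * \zeta$ together with the $\Delta$-terms; associativity of $*$ handles the first, and the matching of the $\Delta$-terms is precisely what (\ref{delta-cond2}) supplies (all three of its relations, to move $\xi$ into $\Delta(\xi\triangleright b, c)=\xi*\Delta(b,c)$, to move $\zeta$ into $\Delta(a,b)*\zeta=\Delta(a,b\triangleleft\zeta)$, and to rewrite $\Delta(a\triangleleft\eta,c)=\Delta(a,\eta\triangleright c)$), while (\ref{delta-cond3}) turns $\Delta(a \cdot b, c)$ into $\Delta(a, b\cdot c)$. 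On the $A$-side, I would use the three bimodule axioms for $(\triangleright, \triangleleft)$ to re-associate the mixed $B$–$A$ terms, the three relations of (\ref{b-algebra}) to commute $\cdot$ past the two actions, and the first pair of relations in (\ref{delta-cond1}) to convert $\Delta(a,b) \triangleright c$ and $a \triangleleft \Delta(b,c)$ into $\{a,b,c\}$ and $\{\!\!\{a,b,c\}\!\!\}$ respectively. After these rewritings, each side carries eight terms, six of which match identically; the left side then retains $\{a,b,c\} + (a\cdot b)\cdot c$ while the right side retains $\{\!\!\{a,b,c\}\!\!\} + a \cdot (b \cdot c)$, and the equality of these leftovers is exactly identity (\ref{ay1}). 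Hence $\oast$ is associative.

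Finally, to check that the induced structure on $A_1 \cong A$ agrees with the given one, I would apply the formulas of Example \ref{reductive} to the decomposition above. Computing $(0,a) \oast (0,b) = (\Delta(a,b), a\cdot b)$, its $A_1$-part gives $a \bullet b = a \cdot b$, and its $A_0$-part is $(\Delta(a,b),0)$. Then $\big(\mathrm{pr}_{A_0}((0,a)\oast(0,b))\big) \oast (0,c) = (0, \Delta(a,b) \triangleright c) = (0, \{a,b,c\})$ and $(0,a) \oast \big(\mathrm{pr}_{A_0}((0,b)\oast(0,c))\big) = (0, a \triangleleft \Delta(b,c)) = (0, \{\!\!\{a,b,c\}\!\!\})$, once more by (\ref{delta-cond1}). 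Thus the induced triple products coincide with $\{~,~,~\}$ and $\{\!\!\{~,~,~\}\!\!\}$.

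The main obstacle is purely organizational: the $A$-component of the associativity check produces on each side eight terms mixing $*$, $\triangleright$, $\triangleleft$ and $\cdot$, and one must apply the correct hypothesis to each term to see the clean cancellation down to (\ref{ay1}). It is worth noting that, somewhat surprisingly, only (\ref{ay1}) among the eleven defining identities is invoked in this argument; the remaining identities (\ref{ay2})--(\ref{ay11}) are not needed for this direction, since the structural hypotheses (\ref{b-algebra})--(\ref{delta-cond3}) already encode the required compatibilities.
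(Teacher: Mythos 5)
Your proposal is correct and takes essentially the same route as the paper: expand both sides of the associativity of $\oast$, match the $B$- and $A$-components term by term using the bimodule axioms together with (\ref{b-algebra})--(\ref{delta-cond3}), observe that the leftover pair of terms is exactly identity (\ref{ay1}), and then read both the reductivity and the coincidence of the induced structure directly off the formula (\ref{env-product}). If anything, your write-up is more explicit than the paper's (which merely asserts that the two expansions agree), and your remark that only (\ref{ay1}) among the eleven defining identities is needed is accurate.
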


\begin{proof}
    For any $(\xi, a) , (\eta , b) , (\zeta, c) \in B \oplus A$, we observe that
    \begin{align}\label{env1}
        &((\xi, a) \oast (\eta, b) ) \oast (\zeta , c) \nonumber \\
        &= \big(    \xi * \eta +\Delta (a, b) ~ \! , ~ \! \xi \triangleright b + a \triangleleft \eta + a \cdot b \big) \oast (\zeta, c) \nonumber \\
        &= \big( (\xi \ast \eta) \ast \zeta   + \Delta (a, b) * \zeta + \Delta (\xi \triangleright b , c) + \Delta (a \triangleleft \eta, c) + \Delta (a \cdot b, c) ~ \! , ~ \! (\xi * \eta) \triangleright c + (\Delta (a, b)) \triangleright c  \nonumber  \\
       & \qquad \qquad + (\xi \triangleright b) \triangleleft \zeta + (a \triangleleft \eta) \triangleleft \zeta + (a \cdot b) \triangleleft \zeta + (\xi \triangleright b) \cdot c + (a \triangleleft \eta) \cdot c + (a \cdot b) \cdot c \big). 
    \end{align}
    On the other hand,
    \begin{align}\label{env2}
             &(\xi, a) \oast ((\eta, b)  \oast (\zeta , c) ) \nonumber \\
             &= (\xi, a) \oast \big(  \eta * \zeta + \Delta (b, c) ~ \!, ~ \! \eta \triangleright c + b  \triangleleft \zeta + b \cdot  c \big) \nonumber \\
             &= \big( \xi * (\eta * \zeta) + \xi * \Delta (b, c) + \Delta (a, \eta \triangleright c) + \Delta (a, b \triangleleft \zeta) + \Delta (a, b \cdot c) ~ \!, ~ \! \xi \triangleright (\eta \triangleright c) + \xi \triangleright (b \triangleleft \zeta)   \nonumber \\
             & \qquad \qquad + \xi \triangleright (b \cdot c) + a \triangleleft (\eta * \zeta) + a \triangleleft ( \Delta (b, c)) + a \cdot (\eta \triangleright c) + a \cdot (b \triangleleft \zeta) + a \cdot (b \cdot c) \big). 
    \end{align}
    By applying the assumptions provided in the statement, one can observe that the expressions in (\ref{env1}) and (\ref{env2}) are identical. Hence $(B \oplus A, \oast)$ is an associative algebra. Further, it follows from the expression (\ref{env-product}) that 
    \begin{align*}
        (\xi, 0) \oast (\eta, 0) \in B \oplus \{ 0 \}, \quad (\xi, 0) \oast (0, a) \in \{ 0 \} \oplus A ~~~~ \text{ and } ~~~~ (0, a) \oast (\xi, 0) \in \{ 0 \} \oplus A,
    \end{align*}
    for any $\xi, \eta \in B$ and $a \in A$. This shows that $(B \oplus A, \oast)$ is a reductive algebra. Finally, the last part is easy to verify.
\end{proof}

Let $(A, ~ \! \cdot ~ \!, \{ ~, ~, ~ \}, \{ \! \! \{ ~, ~, ~ \} \! \! \})$ be an associative-Yamaguti algebra. Let $(B, *)$ be any associative algebra with linear maps $\triangleright : B \otimes A \rightarrow A$, $\triangleleft : A \otimes B \rightarrow A$ that make $A$ into a bimodule over the associative algebra $(B, *)$, satisfying additionally (\ref{b-algebra}). Also, let $\Delta : A \otimes A \rightarrow B$ be a linear map satisfying the conditions given in (\ref{delta-cond1})-(\ref{delta-cond3}). Then the associative algebra $(B \oplus A , \oast)$ constructed in the above proposition is called an {\bf enveloping associative algebra} of the associative-Yamaguti algebra $(A, ~ \! \cdot ~ \!, \{ ~, ~, ~ \}, \{ \! \! \{ ~, ~, ~ \} \! \! \})$.

\medskip

The next result shows that any associative-Yamaguti algebra admits an enveloping associative algebra. The proof of this result is straightforward and follows from the axioms of an associative-Yamaguti algebra. In particular, this shows that any associative-Yamaguti algebra is always induced by a reductive associative algebra in the sense of Example \ref{reductive}.

\begin{thm}\label{thm-env-ass}
    Let $(A, ~ \! \cdot ~ \!, \{ ~,~,~\}, \{ \! \! \{ ~, ~, ~ \} \! \! \})$ be an associative-Yamaguti algebra. Suppose $\mathcal{M} (A)$ is the vector subspace of $\mathrm{End}(A) \oplus \mathrm{End}(A)$ spanned by the set $\{  (\sigma_{a, b} , \tau_{a, b}) ~ \! | ~ \! a, b \in A    \}$.
\begin{itemize}
    \item[(i)] Then $( \mathcal{M}(A), ~ \! * ~ \!)$ is an associative algebra, where
    \begin{align*}
        (\sigma_{a, b} , \tau_{a, b}) * (\sigma_{c, d} , \tau_{c, d}) = ( \sigma_{ \{ a, b, c \} ,  d }, \tau_{ \{ a, b, c \} ,  d } ), \text{ for }  (\sigma_{a, b} , \tau_{a, b}) , (\sigma_{c, d} , \tau_{c, d}) \in \mathcal{M}(A).
    \end{align*}
    \item[(ii)] We define linear maps $\triangleright : \mathcal{M} (A) \otimes A \rightarrow A$ and $\triangleleft : A \otimes \mathcal{M} (A) \rightarrow A$ by
    \begin{align*}
        (\sigma_{a, b} , \tau_{a, b})  \triangleright c = \sigma_{a, b} (c) = \{ a, b, c \} ~~~~ \text{ and } ~~~~ c \triangleleft (\sigma_{a, b} , \tau_{a, b}) = \tau_{a, b } (c) = \{ \! \! \{ c, a, b \} \! \! \},
    \end{align*}
    for $(\sigma_{a, b} , \tau_{a, b}) \in \mathcal{M} (A)$ and $c \in A$. With the above action maps, $A$ is a bimodule over the associative algebra $(\mathcal{M} (A), ~ \! * ~ \!)$ and the identities in (\ref{b-algebra}) hold.
    \item[(iii)] Also, we define a map $\Delta : A \otimes A \rightarrow \mathcal{M} (A)$ by $\Delta (a, b) = (\sigma_{a, b} , \tau_{a, b})$, for $a, b \in A$. Then $\Delta$ satisfy the identities given in (\ref{delta-cond1})-(\ref{delta-cond3}). 
\end{itemize}
As a consequence, $(\mathcal{M} (A) \oplus A, ~ \! \oast ~ \!)$ is a reductive associtaive algebra, where
\begin{align*}
    (\sigma_{a, b} , \tau_{a, b}, x) \oast (\sigma_{c, d} , \tau_{c, d}, y) = \big( \sigma_{ \{ a, b, c \} ,  d } + \sigma_{x, y} ~ \! , ~ \! \tau_{ \{ a, b, c \} ,  d } + \tau_{x, y} ~ \!, ~ \! \{ a, b, y \} + \{ \! \! \{ x, c, d \} \! \! \} + x \cdot y \big),
\end{align*}
for $(\sigma_{a, b} , \tau_{a, b}, x) , (\sigma_{c, d} , \tau_{c, d}, y) \in \mathcal{M}(A) \oplus A$. Further, the induced associative-Yamaguti algebra structure on $A$ coincides with the prescribed one.
\end{thm}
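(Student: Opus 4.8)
The plan is to verify items (i)--(iii) and then feed them into the preceding Proposition with $B=\mathcal{M}(A)$: reductivity of $(\mathcal{M}(A)\oplus A,\oast)$ and the coincidence of the induced structure are then guaranteed by that Proposition, modulo identifying the reductive decomposition explicitly in the last step. Throughout I would use the operator reformulation of an associative-Yamaguti algebra recorded just after Definition \ref{defn-assy}, since every axiom I must verify is phrased most transparently in terms of $\sigma$ and $\tau$.

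The crux, and the step I expect to be the main obstacle, is the well-definedness of the product $*$ in item (i): an element of $\mathcal{M}(A)$ is a pair of endomorphisms, a single pair $(\sigma_{a,b},\tau_{a,b})$ may be realized by many choices of $a,b$ (indeed by many linear combinations), yet the defining formula for $*$ is written through the auxiliary vector $\{a,b,c\}$. I would remove this difficulty by rewriting the product purely through composition of operators. Using $\sigma_{a,b}\sigma_{c,d}=\sigma_{\sigma_{a,b}(c),d}=\sigma_{\{a,b,c\},d}$ for the first slot, and specializing the $\tau$-relation $\tau_{a,b}\tau_{c,d}=\tau_{\sigma_{c,d}(a),b}$ under the substitution $(a,b,c,d)\mapsto(c,d,a,b)$ to obtain $\tau_{\{a,b,c\},d}=\tau_{c,d}\tau_{a,b}$ for the second slot, the product becomes
\begin{align*}
(\sigma_{a,b},\tau_{a,b})*(\sigma_{c,d},\tau_{c,d})=(\sigma_{a,b}\circ\sigma_{c,d},\ \tau_{c,d}\circ\tau_{a,b}).
\end{align*}
The right-hand side depends only on the two endomorphism pairs, so $*$ is well-defined and bilinear, its value is again of the form $(\sigma_{\{a,b,c\},d},\tau_{\{a,b,c\},d})$ so that $\mathcal{M}(A)$ is closed, and in this guise $\mathcal{M}(A)$ is visibly a subalgebra of $\mathrm{End}(A)\oplus\mathrm{End}(A)^{\mathrm{op}}$, whence associativity of $*$ is automatic. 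This settles item (i).

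For item (ii), the maps $\triangleright$ and $\triangleleft$ act through the first and second components respectively, hence are well-defined; after substituting the operator form of $*$, the left module axiom, the right module axiom and the bimodule compatibility unwind to $\{\{a,b,c\},d,e\}=\{a,b,\{c,d,e\}\}$ (from (\ref{ay7})), to $\{\!\!\{\{\!\!\{e,a,b\}\!\!\},c,d\}\!\!\}=\{\!\!\{e,\{a,b,c\},d\}\!\!\}$ (from (\ref{ay9})), and to $\sigma_{a,b}\tau_{c,d}=\tau_{c,d}\sigma_{a,b}$ (from (\ref{ay11})), while the three relations of (\ref{b-algebra}) are exactly (\ref{ay3}), (\ref{ay4}) and (\ref{ay6}) read through $\sigma$ and $\tau$. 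For item (iii), both equalities of (\ref{delta-cond1}) hold by the definitions of $\triangleright$, $\triangleleft$, $\Delta$; in (\ref{delta-cond2}) the first equality holds on the nose by the definitions of $*$ and $\Delta$, and the remaining two reduce component by component to the $\sigma$- and $\tau$-relations of the reformulation (the second to $\sigma_{\{a,b,c\},d}=\sigma_{a,\{\!\!\{b,c,d\}\!\!\}}$ together with its $\tau$-analogue, the third to $\sigma_{a,\sigma_{b,c}(d)}=\sigma_{\tau_{b,c}(a),d}$ and $\tau_{\tau_{a,b}(c),d}=\tau_{c,\sigma_{a,b}(d)}$); and (\ref{delta-cond3}) is precisely (\ref{ay2}) and (\ref{ay5}). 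None of these require more than inserting the definitions and quoting a single displayed identity.

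It remains to identify the induced structure. I would take the reductive decomposition $\mathcal{M}(A)\oplus A$ with $A_0=\mathcal{M}(A)\oplus 0$ and $A_1=0\oplus A$, identify $A$ with $A_1$ via $a\mapsto(0,a)$, and run the recipe of Example \ref{reductive}. Since $(0,a)\oast(0,b)=(\Delta(a,b),a\cdot b)=((\sigma_{a,b},\tau_{a,b}),a\cdot b)$, the $A_1$-projection returns $a\cdot b$; multiplying $\mathrm{pr}_{A_0}$ of this on the right by $(0,c)$ gives $(0,\sigma_{a,b}(c))=(0,\{a,b,c\})$, and combining $(0,a)$ on the left with $\mathrm{pr}_{A_0}\big((0,b)\oast(0,c)\big)$ gives $(0,\tau_{b,c}(a))=(0,\{\!\!\{a,b,c\}\!\!\})$. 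Hence the induced operations are exactly $\cdot$, $\{~,~,~\}$ and $\{\!\!\{~,~,~\}\!\!\}$, completing the argument.
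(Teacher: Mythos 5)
Your proposal is correct and takes essentially the approach the paper intends: the paper omits the verification as ``straightforward,'' the point being exactly what you do --- check (i)--(iii) via the operator reformulation ($\sigma$, $\tau$ and the identities (\ref{ay2})--(\ref{ay11})) and feed the result into the preceding Proposition, whose conclusion already supplies both reductivity of $(\mathcal{M}(A)\oplus A,\oast)$ and the coincidence of the induced structure. Your one genuine addition is the rewriting of $*$ as $(\sigma_{a,b},\tau_{a,b})*(\sigma_{c,d},\tau_{c,d})=(\sigma_{a,b}\circ\sigma_{c,d},\ \tau_{c,d}\circ\tau_{a,b})$, which settles the well-definedness of the product on linear combinations of generators (a point the paper's formula, written through representatives $a,b,c,d$, silently glosses over) and makes associativity automatic by exhibiting $\mathcal{M}(A)$ as a subalgebra of $\mathrm{End}(A)\oplus\mathrm{End}(A)^{\mathrm{op}}$.
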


In the following result, we show that a suitable skew-symmetrization of an associative-Yamaguti algebra gives rise to a Lie-Yamaguti algebra structure. More precisely, we have the following.

\begin{thm}\label{assy-liey}
    Let $(A, ~\!  \cdot ~ \! , \{ ~, ~, ~\}, \{ \! \! \{ ~, ~, ~\} \! \! \})$ be an associative-Yamaguti algebra. Then $(A, [~,~], \llbracket ~, ~, ~ \rrbracket)$ is a Lie-Yamaguti algebra, where
\begin{align*}
    [a, b ] = a \cdot b  - b \cdot a \quad  \text{ and }  \quad \llbracket a, b, c \rrbracket = \{ a, b, c \}- \{ b, a, c \} - \{ \! \! \{ c, a, b \} \! \! \} + \{ \! \! \{ c, b, a \} \! \! \}, \text{ for } a, b, c \in A.
\end{align*}
Moreover, if $(A, ~\!  \cdot ~ \! , \{ ~, ~, ~\}, \{ \! \! \{ ~, ~, ~\} \! \! \})$ and $(A', ~\!  \cdot' ~ \! , \{ ~, ~, ~\}', \{ \! \! \{ ~, ~, ~\} \! \! \}')$ are two associative-Yamaguti algebras and $\varphi : A \rightarrow A'$ is a homomorphism between them, then $\varphi$ is also a homomorphism between the corresponding Lie-Yamaguti algebras.
\end{thm}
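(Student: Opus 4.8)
The plan is to avoid a head-on verification of (\ref{ly1})--(\ref{ly4}) and instead deduce the result from the enveloping-algebra machinery of Theorem~\ref{thm-env-ass}. By that theorem, the given associative-Yamaguti algebra $A$ sits inside the reductive associative algebra $(E, \oast)$ with $E = \mathcal{M}(A) \oplus A$, in such a way that the associative-Yamaguti structure induced on the subspace $A$ (in the sense of Example~\ref{reductive}, with $A_0 = \mathcal{M}(A)$ and $A_1 = A$) is precisely the prescribed one. First I would record the elementary fact that the commutator $[x, y]_E := x \oast y - y \oast x$ makes $(E, [~, ~]_E)$ a Lie algebra, and that $E = \mathcal{M}(A) \oplus A$ is a \emph{reductive decomposition} of this Lie algebra: the inclusions $\mathcal{M}(A) \oast \mathcal{M}(A) \subseteq \mathcal{M}(A)$, $\mathcal{M}(A) \oast A \subseteq A$ and $A \oast \mathcal{M}(A) \subseteq A$ immediately yield $[\mathcal{M}(A), \mathcal{M}(A)]_E \subseteq \mathcal{M}(A)$ and $[\mathcal{M}(A), A]_E \subseteq A$.

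Next I would invoke the classical Nomizu--Yamaguti construction \cite{nomizu, yamaguti, kinyon}, which is the Lie-theoretic analogue of Example~\ref{reductive}: a reductive Lie algebra $\mathfrak{g} = \mathfrak{h} \oplus \mathfrak{m}$ induces a Lie-Yamaguti structure on $\mathfrak{m}$ via $[X, Y]_{\mathfrak{m}} = \mathrm{pr}_{\mathfrak{m}} [X, Y]_{\mathfrak{g}}$ and $\llbracket X, Y, Z \rrbracket = [\mathrm{pr}_{\mathfrak{h}} [X, Y]_{\mathfrak{g}}, Z]_{\mathfrak{g}}$. Applying this with $\mathfrak{h} = \mathcal{M}(A)$ and $\mathfrak{m} = A$ already endows $A$ with a Lie-Yamaguti structure, so the crux of the argument is the identification of this induced structure with the one in the statement. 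Using the explicit product~(\ref{env-product}) together with~(\ref{delta-cond1}), a short computation gives $\mathrm{pr}_A [a, b]_E = a \cdot b - b \cdot a$ and $\mathrm{pr}_{\mathcal{M}(A)} [a, b]_E = \Delta(a, b) - \Delta(b, a)$; feeding the latter into the ternary bracket and applying~(\ref{delta-cond1}) once more yields $[\mathrm{pr}_{\mathcal{M}(A)} [a, b]_E, c]_E = \{ a, b, c \} - \{ b, a, c \} - \{\!\{ c, a, b \}\!\} + \{\!\{ c, b, a \}\!\}$. These coincide with the operations $[~, ~]$ and $\llbracket ~, ~, ~ \rrbracket$ of the theorem, so $(A, [~, ~], \llbracket ~, ~, ~ \rrbracket)$ is a Lie-Yamaguti algebra. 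The functoriality statement is then immediate: since $[~, ~]$ is the commutator of $\cdot$ and $\llbracket ~, ~, ~ \rrbracket$ is a fixed linear combination of $\{ ~, ~, ~ \}$ and $\{\!\{ ~, ~, ~ \}\!\}$, any $\varphi$ preserving the three associative-Yamaguti operations preserves both by linearity.

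The main obstacle is not computational but one of self-containment: the Nomizu--Yamaguti reductive construction is classical yet is not proved in the present excerpt. To keep the argument wholly internal, one would instead verify (\ref{ly1})--(\ref{ly4}) directly from the axioms (\ref{ay1})--(\ref{ay11}). In that route (\ref{ly1}) is obtained by skew-symmetrizing the associator identity~(\ref{ay1}) cyclically, the mixed identities~(\ref{ay2})--(\ref{ay6}) control the interaction of $[~, ~]$ with $\llbracket ~, ~, ~ \rrbracket$ needed for (\ref{ly2}) and (\ref{ly3}), and the pure triple-product identities~(\ref{ay7})--(\ref{ay11}) handle (\ref{ly4}). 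The genuinely laborious case is the fundamental identity~(\ref{ly4}): each nested bracket expands into many terms that must be reorganized using~(\ref{ay7})--(\ref{ay11}) together with the symmetry $\sigma_{a,b}\tau_{c,d} = \tau_{c,d}\sigma_{a,b}$. The enveloping-algebra route above is preferable precisely because it replaces this extensive bookkeeping with the single clean matching computation of the previous paragraph.
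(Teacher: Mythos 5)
Your proposal is correct, but it takes a genuinely different route from the paper. The paper proves Theorem \ref{assy-liey} by brute force: it verifies (\ref{ly1})--(\ref{ly4}) directly from the axioms, with exactly the attribution you sketch in your closing paragraph --- (\ref{ay1}) gives (\ref{ly1}), (\ref{ay2}) and (\ref{ay5}) give (\ref{ly2}), (\ref{ay3}), (\ref{ay4}) and (\ref{ay6}) give (\ref{ly3}), and (\ref{ay7})--(\ref{ay11}) give (\ref{ly4}) after a long expansion. Your route instead embeds $A$ into the enveloping algebra $E = \mathcal{M}(A) \oplus A$ of Theorem \ref{thm-env-ass} (which precedes the present theorem in the paper, so there is no circularity), passes to the commutator Lie algebra, notes that $E = \mathcal{M}(A) \oplus A$ is a reductive decomposition of that Lie algebra, and invokes the classical Nomizu--Yamaguti construction; your matching computations are right, since $(0,a) \oast (0,b) = (\Delta(a,b), a \cdot b)$ gives $\mathrm{pr}_A [a,b]_E = a \cdot b - b \cdot a$ and $\mathrm{pr}_{\mathcal{M}(A)}[a,b]_E = \Delta(a,b) - \Delta(b,a)$, while $[(\xi,0),(0,c)]_E = (0, \xi \triangleright c - c \triangleleft \xi)$ together with (\ref{delta-cond1}) yields precisely $\{a,b,c\} - \{b,a,c\} - \{ \! \! \{ c,a,b \} \! \! \} + \{ \! \! \{ c,b,a \} \! \! \}$. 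What your approach buys is a short, conceptual argument that also explains the compatibility diagram (\ref{ass-diagram}) and ties the theorem back to the geometric origin of Lie--Yamaguti algebras. What it costs is self-containment, in two respects: first, the fact that a reductive Lie algebra decomposition $\mathfrak{g} = \mathfrak{h} \oplus \mathfrak{m}$ induces a Lie--Yamaguti structure on $\mathfrak{m}$ is classical but is nowhere proved (or even stated) in this paper, so it enters as an external citation; second, the work is partly relocated rather than eliminated, because Theorem \ref{thm-env-ass} itself is stated in the paper with its proof declared ``straightforward'' and omitted, and verifying associativity of $\oast$ there encodes essentially the same eleven axioms that the paper's direct proof consumes. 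The functoriality clause is handled the same way in both arguments and is fine as you state it.
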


\begin{proof}
    It is easy to see that the bracket $[~,~]$ is skew-symmetric and the operation $\llbracket ~, ~, ~ \rrbracket$ is skew-symmetric on the first two inputs. Let $a,b, c \in A$ be arbitrary. Then we have
    \begin{align*}
       & [[a, b ], c] + [[b, c], a] + [[c, a], b] + \llbracket a, b, c \rrbracket + \llbracket b, c, a \rrbracket + \llbracket c, a, b \rrbracket \\
       & = (a \cdot b - b \cdot a) \cdot c - c \cdot (a \cdot b - b \cdot a) + (b \cdot c - c \cdot b) \cdot a - a \cdot (b \cdot c - c \cdot b) \\ 
       & \quad + (c \cdot a - a \cdot c) \cdot b - b \cdot (c \cdot a - a \cdot c) 
        + \{ a, b, c \} - \{ b, a, c \} - \{ \! \! \{ c, a, b \} \! \! \} + \{ \! \! \{ c, b, a    \} \! \! \} \\
        & \quad + \{ b, c, a \} - \{ c, b, a \} - \{ \! \! \{ a, b, c \} \! \! \} + \{ \! \! \{     a, c, b \} \! \! \} + \{ c, a, b \} - \{ a, c, b \} - \{ \! \! \{ b, c, a    \} \! \! \} + \{ \! \! \{ b, a, c    \} \! \! \} \\
        &= \big(   (a \cdot b) \cdot c - a \cdot (b \cdot c) + \{ a, b, c \} - \{ \! \! \{    a, b, c \} \! \! \}  \big) + \big(  (b \cdot c) \cdot a - b \cdot (c \cdot a) + \{ b, c, a \} - \{ \! \! \{ b, c, a    \} \! \! \}   \big) \\
        & \quad + \big(  (c \cdot a) \cdot b - c \cdot (a \cdot b) + \{ c, a, b \} - \{ \! \! \{     c, a, b \} \! \! \}  \big) - \big(  (b \cdot a) \cdot c - b  \cdot (a \cdot c) + \{ b, a, c \} - \{ \! \! \{  b, a, c   \} \! \! \}  \big) \\
       & \quad - \big(  (a \cdot c) \cdot b - a \cdot (c \cdot b) + \{ a, c, b \} - \{ \! \! \{     a, c, b \} \! \! \}  \big) - \big(  (c \cdot b) \cdot a - c \cdot (b \cdot a) + \{ c, b, a \} - \{ \! \! \{  c, b, a    \} \! \! \}     \big) \\
        &= 0 \qquad (\text{by } (\text{\ref{ay1}}))
    \end{align*}
    which proves the identity (\ref{ly1}). For any $a, b, c, d \in A$, we also have
    \begin{align*}
       & \llbracket [a, b], c, d \rrbracket + \llbracket [b, c],a, d \rrbracket + \llbracket [c, a], b, d \rrbracket \\
        &= \{ a \cdot b - b \cdot a , c, d \} - \{ c, a\cdot b - b \cdot a, d \} - \{ \! \! \{  d, a \cdot b - b \cdot a, c   \} \! \! \} + \{ \! \! \{  d, c, a \cdot b - b \cdot a   \} \! \! \} \\
       & \quad + \{ b \cdot c - c \cdot b, a, d \} - \{ a, b \cdot c - c \cdot b , d \} - \{ \! \! \{  d, b \cdot c - c \cdot b, a   \} \! \! \} + \{ \! \! \{ d, a, b \cdot c - c \cdot b    \} \! \! \} \\
       & \quad + \{ c \cdot a - a \cdot c, b, d \} - \{ b, c \cdot a - a \cdot c, d \} - \{ \! \! \{ d, c \cdot a - a \cdot c, b    \} \! \! \} + \{ \! \! \{  d, b, c \cdot a- a \cdot c   \} \! \! \} \\
       & = \big(  \{ a \cdot b , c, d \} - \{ a, b \cdot c, d \}  \big) - \big(  \{ b \cdot a, c, d \} - \{ b, a \cdot c, d \}  \big) + \big(  \{ b \cdot c, a, d \} - \{ b, c \cdot a, d \}  \big) \\
       & \quad - \big(  \{ c \cdot b, a, d \} - \{ c, b \cdot a , d \} \big) + \big(  \{ c \cdot a, b, d \} - \{ c, a \cdot b , d \} \big) - \big(   \{ a \cdot c, b, d \} - \{ a, c \cdot b, d \}   \big) \\
       & \quad - \big(    \{ \! \! \{ d, a \cdot b, c    \} \! \! \} - \{ \! \! \{ d, a, b \cdot c    \} \! \! \}   \big) + \big(   \{ \! \! \{  d, b \cdot a, c   \} \! \! \} - \{ \! \! \{ d, b, a \cdot c    \} \! \! \}  \big) - \big(  \{ \! \! \{ d, b \cdot c, a    \} \! \! \} - \{ \! \! \{ d, b, c \cdot a    \} \! \! \}  \big) \\
       & \quad + \big(   \{ \! \! \{ d, c \cdot b, a    \} \! \! \}   - \{ \! \! \{  d, c, b \cdot a   \} \! \! \} \big) - \big(   \{ \! \! \{ d, c \cdot a, b    \} \! \! \} - \{ \! \! \{  d, c, a \cdot b   \} \! \! \}  \big) + \big(   \{ \! \! \{ d, a \cdot c, b    \} \! \! \}  - \{ \! \! \{ d , a, c \cdot b    \} \! \! \} \big) \\
       & = 0 \qquad (\text{by } (\text{\ref{ay2}}) \text{ and } (\text{\ref{ay5}})).
    \end{align*}
    Hence, the identity (\ref{ly2}) also follows. Moreover, we see that
    \begin{align*}
        &\llbracket a, b, [c, d ] \rrbracket - [ \llbracket a, b, c \rrbracket , d] - [c, \llbracket a, b, d \rrbracket ] \\
        &= \{ a, b, c \cdot d - d \cdot c \} - \{ b, a, c \cdot d - d \cdot c \} - \{ \! \! \{ c \cdot d - d \cdot c, a, b   \} \! \! \} + \{ \! \! \{ c \cdot d - d \cdot c, b, a   \} \! \! \} \\
        & \quad- \{ a, b, c \} \cdot d + \{ b, a, c \} \cdot d + \{ \! \! \{  c, a, b  \} \! \! \} \cdot d - \{ \! \! \{  c, b, a  \} \! \! \} \cdot d + d \cdot \{ a, b, c \} - d \cdot \{ b, a, c \} \\
        & \quad - d \cdot \{ \! \! \{ c, a, b   \} \! \! \} + d \cdot \{ \! \! \{  c, b, a  \} \! \! \} - c \cdot \{ a, b, d \} + c \cdot \{ b, a, d \} + c \cdot \{ \! \! \{ d, a, b   \} \! \! \} - c \cdot \{ \! \! \{ d, b, a   \} \! \! \} \\
       & \quad + \{a, b, d \} \cdot c - \{ b, a, d \} \cdot c - \{ \! \! \{ d, a, b \} \! \! \} \cdot c + \{ \! \! \{ d, b, a   \} \! \! \} \cdot c \\
       & = \big(  \{ a, b, c \cdot d \} - \{ a, b, c \} \cdot d    \big) - \big( \{ a, b, d \cdot c \} - \{ a, b, d \} \cdot c    \big) - \big(   \{ b, a, c \cdot d \} - \{ b, a, c \} \cdot d  \big) \\
       & \quad + \big(  \{ b, a, d \cdot c \} - \{ b, a, d \} \cdot c  \big) - \big(   \{ \! \! \{   c \cdot d, a, b \} \! \! \} - c \cdot \{ \! \! \{ d, a, b   \} \! \! \} \big) + \big(   \{ \! \! \{ d \cdot c, a, b   \} \! \! \}  - d \cdot \{ \! \! \{ c, a, b   \} \! \! \}  \big) \\
        & \quad + \big(    \{ \! \! \{  c \cdot d, b, a  \} \! \! \}  - c \cdot \{ \! \! \{ d, b, a   \} \! \! \}  \big) - \big(  \{ \! \! \{ d \cdot c, b, a   \} \! \! \} - d \cdot \{ \! \! \{  c, b, a  \} \! \! \}  \big) - \big(  c \cdot \{  a, b, d  \}  - \{ \! \! \{ c, a, b   \} \! \! \} \cdot d  \big) \\
       & \quad + \big(  c \cdot \{ b, a, d \} - \{ \! \! \{ c, b, a   \} \! \! \} \cdot d   \big) + \big( d \cdot \{ a, b, c   \} - \{ \! \! \{ d, a, b   \} \! \! \} \cdot c \big) - \big(   d \cdot \{ b, a, c \} - \{ \! \! \{ d, b, a   \} \! \! \} \cdot c \big) \\
        &= 0 \qquad (\text{by } (\text{\ref{ay3}}), (\text{\ref{ay4}}) \text{ and } (\text{\ref{ay6}})),
    \end{align*}
    that verifies (\ref{ly3}). Finally, for any $a, b, c, d, e \in A$, we see that
    \begin{align*}
        &\llbracket a, b, \llbracket c, d, e \rrbracket \rrbracket - \llbracket \llbracket a, b, c \rrbracket , d, e \rrbracket - \llbracket c, \llbracket a, b, d \rrbracket , e \rrbracket - \llbracket c, d , \llbracket a, b, e \rrbracket \rrbracket \\
        &= \big\{ a, b, \{ c, d, e \} - \{ d, c, e \} - \{ \! \! \{ e, c, d \} \! \! \} + \{ \! \! \{ e, d, c    \} \! \! \} \big\} - \big\{ b, a , \{ c, d, e \} - \{ d, c, e \} - \{ \! \! \{ e, c, d \} \! \! \} + \{ \! \! \{ e, d, c    \} \! \! \} \big\} \\
        & \quad - \{ \! \! \{ \{ c, d, e \} - \{ d, c, e \} - \{ \! \! \{ e, c, d \} \! \! \} + \{ \! \! \! \{ e, d, c    \} \! \! \}, a, b \} \! \! \}  + \{ \! \! \{    \{ c, d, e \} - \{ d, c, e \} - \{ \! \! \{ e, c, d \} \! \! \} + \{ \! \! \{ e, d, c    \} \! \! \}, b, a \} \! \! \} \\
        & \quad - \big\{   \{a, b, c \} - \{ b, a, c \}- \{ \! \! \{ c, a, b \} \! \! \}+ \{ \! \! \{ c, b, a \} \! \! \}, d, e   \big\} + \big\{  d,  \{a, b, c \} - \{ b, a, c \}- \{ \! \! \{ c, a, b \} \! \! \}+ \{ \! \! \{ c, b, a \} \! \! \}, e \big\} \\
        & \quad + \{ \! \! \{ e,  \{a, b, c \} - \{ b, a, c \}- \{ \! \! \{ c, a, b \} \! \! \}+ \{ \! \! \{ c, b, a \} \! \! \}, d \} \! \! \} - \{ \! \! \{ e, d,  \{a, b, c \} - \{ b, a, c \}- \{ \! \! \{ c, a, b \} \! \! \}+ \{ \! \! \{ c, b, a \} \! \! \} \} \! \! \} \\
        & \quad - \big\{       c, \{ a, b, d \} - \{ b, a, d \} - \{ \! \! \{ d, a, b \} \! \! \} + \{ \! \! \{ d, b, a \} \! \! \}, e \big\} + \big\{    \{ a, b, d \} - \{ b, a, d \} - \{ \! \! \{ d, a, b \} \! \! \} + \{ \! \! \{ d, b, a \} \! \! \}, c, e  \big\} \\
        & \quad + \{ \! \! \{ e, c,  \{ a, b, d \} - \{ b, a, d \} - \{ \! \! \{ d, a, b \} \! \! \} + \{ \! \! \{ d, b, a \} \! \! \}  \} \! \! \} - \{ \! \! \{ e,  \{ a, b, d \} - \{ b, a, d \} - \{ \! \! \{ d, a, b \} \! \! \} + \{ \! \! \{ d, b, a \} \! \! \}, c \} \! \! \} \\
        & \quad - \big\{     c, d, \{ a, b, e \} - \{ b, a, e \} - \{ \! \! \{ e, a, b \} \! \! \} + \{ \! \! \{ e, b, a \} \! \! \} \big\} + \big\{  d, c,  \{ a, b, e \} - \{ b, a, e \} - \{ \! \! \{ e, a, b \} \! \! \} + \{ \! \! \{ e, b, a \} \! \! \}  \big\} \\
        & \quad + \{ \! \! \{ \{ a, b, e \} - \{ b, a, e \} - \{ \! \! \{ e, a, b \} \! \! \} + \{ \! \! \{ e, b, a \} \! \! \}, c, d \} \! \! \} - \{ \! \! \{ \{ a, b, e \} - \{ b, a, e \} - \{ \! \! \{ e, a, b \} \! \! \} + \{ \! \! \{ e, b, a \} \! \! \}, d, c \} \! \! \} \\
        &=  \big(  \{ a, b, \{ c, d, e \} \} - \{ \{ a, b, c \}, d, e \}  \big) - \big(   \{ a, b , \{ d, c, e \} \} - \{ \{ a, b, d \} , c, e \}   \big) \\ 
        & \qquad  - \big(   \{ a, b,   \{ \! \! \{  e,c, d    \} \! \! \} \} -  \{ \! \! \{  \{ a, b, e \}, c, d    \} \! \! \}    \big)  + \big(   \{ a, b,   \{ \! \! \{  e, d , c   \} \! \! \} \} -  \{ \! \! \{  \{ a, b, e \}, d, c    \} \! \! \}    \big) \\
        & \qquad    - \big(  \{ b, a , \{ c, d, e \} \} - \{ \{ b, a, c \}, d, e \}   \big) +  \big(  \{ b, a , \{  d, c, e \} \} - \{ \{ b, a, d \}, c, e \}     \big) \\
        & \qquad   + \big(  \{ b, a, \{ \! \! \{ e, c, d \} \! \! \} \} - \{ \! \! \{ \{ b, a, e \}, c, d \} \! \! \}    \big) - \big(  \{ b, a, \{ \! \! \{ e,  d, c \} \! \! \} \} - \{ \! \! \{ \{ b, a, e \},  d, c \} \! \! \}    \big) \\
        & \qquad   - \big(   \{ \! \! \{  \{ c, d, e \}, a, b   \} \! \! \}  - \{ c, d ,  \{ \! \! \{   e, a, b  \} \! \! \} \}  \big) + \big(   \{ \! \! \{ \{ d, c, e \}, a, b    \} \! \! \}  - \{ d, c,  \{ \! \! \{ e, a, b    \} \! \! \} \}    \big) \\
        & \qquad   + \big(    \{ \! \! \{      \{ \! \! \{  e, c, d   \} \! \! \} , a, b   \} \! \! \}   -  \{ \! \! \{  e, c,  \{ \! \! \{  d, a, b   \} \! \! \}     \} \! \! \}   \big) - \big(   \{ \! \! \{    \{ \! \! \{  e, d, c   \} \! \! \}, a, b    \} \! \! \} -  \{ \! \! \{ e, d,  \{ \! \! \{ c, a, b    \} \! \! \}     \} \! \! \}   \big) \\
        & \qquad  + \big(    \{ \! \! \{  \{c, d, e \}, b, a   \} \! \! \}  - \{ c, d,   \{ \! \! \{e, b, a   \} \! \! \}   \}  \big)  - \big(   \{ \! \! \{  \{ d, c, e \}, b, a   \} \! \! \}  - \{ d, c,  \{ \! \! \{  e, b, a   \} \! \! \} \}  \big) \\
        & \qquad   - \big(   \{ \! \! \{    \{ \! \! \{   e, c, d  \} \! \! \} , b, a    \} \! \! \} -  \{ \! \! \{  e, c,  \{ \! \! \{  d, b, a   \} \! \! \}    \} \! \! \}     \big) + \big(    \{ \! \! \{    \{ \! \! \{  e, d, c   \} \! \! \} , b, a   \} \! \! \}    -  \{ \! \! \{  e, d,  \{ \! \! \{  c, b, a   \} \! \! \}    \} \! \! \} \big) \\
        & \qquad   + \big(  \{    \{ \! \! \{  c, a, b   \} \! \! \} , d, e \} - \{ c, \{ a, b, d \}, e \}   \big) -  \big(  \{  \{ \! \! \{  c, b, a   \} \! \! \}, d, e \} - \{ c,  \{  b, a, d    \} , e \}   \big) \\
        & \qquad   + \big( \{ d, \{ a, b, c \}, e \} - \{  \{ \! \! \{  d, a, b   \} \! \! \} , c, e \}   \big) - \big( \{ d, \{ b, a, c \}, e \} - \{   \{ \! \! \{  d, b, a   \} \! \! \} , c, e \} \big) \\
        & \qquad   - \big(   \{ d,  \{ \! \! \{ c, a, b    \} \! \! \}, e \} - \{ d, c, \{ a, b, e \} \}    \big) + \big(  \{ d,  \{ \! \! \{  c, b, a   \} \! \! \} , e \} - \{ d, c, \{ b, a, e \}  \}    \big)  \\
        & \qquad   + \big(   \{ \! \! \{ e, \{ a, b, c \}, d    \} \! \! \}  -  \{ \! \! \{   \{ \! \! \{  e, a, b   \} \! \! \} , c, d   \} \! \! \}    \big) - \big(   \{ \! \! \{  e, \{ b, a, c \}, d   \} \! \! \}   -  \{ \! \! \{   \{ \! \! \{  e, b, a   \} \! \! \}    , c, d \} \! \! \}    \big) \\
        & \qquad   - \big(  \{ \! \! \{  e,  \{ \! \! \{  c, a, b     \} \! \! \} , d     \} \! \! \} -  \{ \! \! \{   e, c , \{ a, b, d \}    \} \! \! \}    \big) + \big(   \{ \! \! \{    e,  \{ \! \! \{ c, b, a      \} \! \! \} , d   \} \! \! \} -  \{ \! \! \{   e, c, \{ b, a, d \}    \} \! \! \}   \big) \\
       &  \qquad  - \big(  \{ \! \! \{  e, d, \{ a, b, c \}     \} \! \! \}   -   \{ \! \! \{  e,  \{ \! \! \{   d, a, b    \} \! \! \} , c     \} \! \! \}    \big) + \big(    \{ \! \! \{  e, d,  \{  b, a, c     \}      \} \! \! \}  -  \{ \! \! \{ e,  \{ \! \! \{   d, b, a    \} \! \! \} , c      \} \! \! \}     \big) \\
      &  \qquad+ \big( \{ c,  \{ \! \! \{ d, a, b      \} \! \! \} , e \} - \{ c, d, \{ a, b, e \} \}    \big) - \big(  \{ c,  \{ \! \! \{  d, b, a     \} \! \! \}, e \} - \{ c, d, \{ b, a, e \} \}   \big) \\
       &  \qquad  - \big(  \{ \! \! \{ e, \{ a, b, d \}, c      \} \! \! \}   -  \{ \! \! \{      \{ \! \! \{  e, a, b     \} \! \! \} , d, c    \} \! \! \}   \big) + \big(   \{ \! \! \{  e, \{ b, a, d \}, c     \} \! \! \}  -  \{ \! \! \{    \{ \! \! \{  e, b, a     \} \! \! \} , d, c     \} \! \! \}    \big) \\
       & = 0 \quad (\text{by using } (\text{\ref{ay7}}), (\text{\ref{ay8}}), (\text{\ref{ay9}}), (\text{\ref{ay10}}) \text{ and } (\text{\ref{ay11}})).
    \end{align*}
    This verifies the identity (\ref{ly4}). Hence, $(A, [~,~], \llbracket ~, ~ , ~ \rrbracket)$ is a Lie-Yamaguti algebra.

    For the second part, we observe that
    \begin{align*}
        \varphi ([a, b]) = \varphi (a \cdot b - b \cdot a) = \varphi (a) \cdot' \varphi (b) - \varphi (b) \cdot' \varphi (a) = [\varphi (a), \varphi (b)]',
    \end{align*}
    \begin{align*}
        \varphi (\llbracket a, b, c \rrbracket) =~& \varphi ( \{ a, b, c \} - \{ b, a, c \} - \{ \! \! \{ c, a, b \} \! \! \} + \{ \! \! \{ c, b, a \} \! \! \} ) \\
        =~& \{ \varphi (a), \varphi( b), \varphi(c) \}' - \{\varphi( b), \varphi( a), \varphi(c) \}' - \{ \! \! \{ \varphi(c), \varphi(a), \varphi(b) \} \! \! \}' + \{ \! \! \{ \varphi (c),\varphi( b), \varphi(a) \} \! \! \}' \\
        =~& \llbracket \varphi (a), \varphi (b), \varphi (c) \rrbracket',
    \end{align*}
    for all $a, b, c \in A$. This proves the desired result.
\end{proof}

The above result shows the existence of a functor $\mathcal{S} : {\bf AssY} \rightarrow {\bf LieY}$ from the category of associative-Yamaguti algebras to the category of Lie-Yamaguti algebras. 

\begin{remark}
    \begin{itemize}
        \item[(i)] The above construction of a Lie-Yamaguti algebra from an associative-Yamaguti algebra is compatible with the well-known `skew-summetrization' construction of a Lie algebra from an associative algebra. More precisely, the following diagram is commutative:
\begin{align}\label{ass-diagram}
   \xymatrix{
     \mathrm{associative ~algebra} \ar[rr]^{\text{Example ~}\ref{exam-ass-assy}} \ar[d]_{\text{skew-symmetrization}} & & \text{associative-Yamaguti ~ algebra}  \ar[d]^{\text{Theorem ~}\ref{assy-liey}} \\
    \mathrm{Lie ~ algebra }  \ar[rr]_{\text{Example ~}\ref{exam-lie-liey}} & & \text{Lie-Yamaguti ~algebra}. 
    }
\end{align}
\item [(ii)] It follows that the construction of an associative-Yamaguti algebra from a diassociative algebra given in Theorem \ref{diass-assy} is natural in the sense that it is also compatible with the construction of a Lie-Yamaguti algebra from a (left) Leibniz algebra. More precisely, the following diagram is also commutative:
\begin{align}\label{diass-diagram}
   \xymatrix{
     \mathrm{diassociative ~algebra} \ar[rr]^{\text{Theorem ~}\ref{diass-assy}} \ar[d]_{\lfloor a, b \rfloor = a ~ \! \vdash ~ \! b - b ~ \! \dashv ~ \! a} & & \text{associative-Yamaguti ~ algebra}  \ar[d]^{\text{Theorem ~}\ref{assy-liey}} \\
    \mathrm{Leibniz ~ algebra }  \ar[rr]_{\text{Example ~}\ref{exam-leib-liey}} & & \text{Lie-Yamaguti ~algebra}. 
    }
\end{align}
    \end{itemize}
\end{remark}


In a particular case of the above theorem, we obtain the following well-known result \cite{meyberg}.

\begin{prop}
    Let $(A, \{ ~, ~ , ~ \})$ be an associative triple system. Then the pair $(A, \llbracket ~, ~ , ~ \rrbracket)$ is a Lie triple system, where
    \begin{align*}
        \llbracket a, b, c \rrbracket := \{ a, b, c \} - \{ b, a , c \} - \{ c, a, b \} + \{ c, b, a \}, \text{ for  } a, b, c \in A.
    \end{align*}
\end{prop}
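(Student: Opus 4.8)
The plan is to recognize this proposition as a direct specialization of Theorem \ref{assy-liey}, combined with the two examples of Section \ref{sec2} and Example \ref{exam-asst}. Essentially no new computation is needed; the work lies entirely in unwinding the definitions correctly so that the general machinery already established collapses onto the stated formula.

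First I would use Example \ref{exam-asst} to regard the given associative triple system $(A, \{ ~, ~, ~ \})$ as an associative-Yamaguti algebra $(A, ~ \! \cdot ~ \!, \{ ~, ~, ~ \}, \{ \! \! \{ ~, ~, ~ \} \! \! \})$ in which the binary multiplication $\cdot$ is trivial and $\{ \! \! \{ ~, ~, ~ \} \! \! \} = \{ ~, ~, ~ \}$. The compatibility here is exactly the content of that example: the associative triple identity (\ref{asst-iden}) is precisely what the axioms (\ref{ay7})--(\ref{ay11}) reduce to once $\cdot = 0$ and the two ternary operations coincide, while (\ref{ay1})--(\ref{ay6}) become vacuous.

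Next I would apply Theorem \ref{assy-liey} to this associative-Yamaguti algebra, obtaining a Lie-Yamaguti algebra $(A, [~,~], \llbracket ~, ~, ~ \rrbracket)$. Since $\cdot$ is trivial, the induced binary bracket $[a, b] = a \cdot b - b \cdot a$ vanishes identically. Since $\{ \! \! \{ ~, ~, ~ \} \! \! \} = \{ ~, ~, ~ \}$, the induced ternary operation
\begin{align*}
    \llbracket a, b, c \rrbracket = \{ a, b, c \} - \{ b, a, c \} - \{ \! \! \{ c, a, b \} \! \! \} + \{ \! \! \{ c, b, a \} \! \! \}
\end{align*}
collapses to exactly $\{ a, b, c \} - \{ b, a, c \} - \{ c, a, b \} + \{ c, b, a \}$, which is the bracket asserted in the statement.

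Finally I would invoke the Lie triple system example of Section \ref{sec2}, which records that a Lie-Yamaguti algebra whose binary bracket is trivial is the same thing as a Lie triple system. Concretely, with $[~,~] = 0$ the axioms (\ref{ly2}) and (\ref{ly3}) hold trivially, since every term on either side contains a binary bracket and hence vanishes; the axiom (\ref{ly1}) reduces to the cyclic identity $\llbracket a, b, c \rrbracket + \llbracket b, c, a \rrbracket + \llbracket c, a, b \rrbracket = 0$; and (\ref{ly4}) is precisely the fundamental derivation identity of a Lie triple system. Thus $(A, \llbracket ~, ~, ~ \rrbracket)$ is a Lie triple system, as claimed. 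There is no genuine obstacle: the only point demanding any attention is the bookkeeping in the first step, namely checking that the chosen structure truly satisfies (\ref{ay1})--(\ref{ay11}), but this is exactly Example \ref{exam-asst} and so may be cited rather than reproved. A fully self-contained alternative would verify the two Lie triple system axioms directly from (\ref{asst-iden}), but that merely reproduces a sub-case of the computation already carried out in Theorem \ref{assy-liey}.
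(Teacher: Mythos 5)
Your proposal is correct and matches the paper's intent exactly: the paper states this proposition as ``a particular case of the above theorem,'' i.e., it is obtained by viewing the associative triple system as an associative-Yamaguti algebra via Example \ref{exam-asst}, applying Theorem \ref{assy-liey}, and noting that a Lie-Yamaguti algebra with vanishing binary bracket is precisely a Lie triple system. Your unwinding of the definitions (including the check that (\ref{ly2}) and (\ref{ly3}) trivialize and that (\ref{ly1}), (\ref{ly4}) become the two Lie triple system axioms) is exactly the implicit argument the paper relies on.
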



\section{Representations and the (2,3)-th cohomology of associative-Yamaguti algebras}\label{sec4}

In this section, we first define representations of an associative-Yamaguti algebra and provide some examples. Then we introduce the $(2,3)$-th cohomology group of an associative-Yamaguti algebra with coefficients in a representation. Some applications of this cohomology group are discussed in the next section.

Let $(A, ~ \! \cdot ~ \!, \{ ~, ~ , ~ \}, \{ \! \! \{ ~, ~, ~ \} \! \! \})$ be an associative-Yamaguti algebra and $M$ be any vector space (not necessarily having any additional structure). Let $\mathcal{A}^{k, 1}$ be the direct sum of all possible $(k+1)$ tensor powers of $A$ and $M$ in which $A$ appears $k$ times (and thus $M$ appears exactly once). For example,
\begin{align*}
    \mathcal{A}^{1,1} = (A \otimes M) \oplus (M \otimes A) ~~~~ \text{ and } ~~~~ \mathcal{A}^{2,1} = (A \otimes A \otimes M) \oplus (A \otimes M \otimes A) \oplus (M \otimes A \otimes A) ~~ \mathrm{etc.}
\end{align*}
Thus, a linear map $\cdot : \mathcal{A}^{1,1} \rightarrow M$ can be decomposed into two linear maps (both being denoted by the same notation) $\cdot : A \otimes M \rightarrow M$ and $\cdot : M \otimes A \rightarrow M$. Similarly, a linear map $\{ ~, ~ , ~ \} :\mathcal{A}^{2,1} \rightarrow M$ can be decomposed into three linear maps (all being denoted by the same notation)
\begin{align*}
    \{ ~, ~ , ~ \} : A \otimes A \otimes M \rightarrow M, \quad \{ ~, ~ , ~ \} : A \otimes M \otimes A \rightarrow M ~~~~ \text{ and } ~~~~ \{ ~, ~, ~ \} : M \otimes A \otimes A \rightarrow M.
\end{align*}

\begin{defn}
    Let $(A, ~ \! \cdot ~ \!, \{ ~, ~ , ~ \}, \{ \! \! \{ ~, ~, ~ \} \! \! \})$ be an associative-Yamaguti algebra. A {\bf representation} of this associative-Yamaguti algebra is a vector space $M$ equipped with linear maps (also denoted by the same notations as the structure operations of $A$)
    \begin{align}\label{aya-rep}
        \cdot : \mathcal{A}^{1,1} \rightarrow M, \quad \{ ~, ~, ~ \} : \mathcal{A}^{2,1} \rightarrow M ~~~~ \text{ and } ~~~~ \{ \! \! \{ ~, ~, ~ \} \! \! \} : \mathcal{A}^{2,1} \rightarrow M
    \end{align}
    that satisfy the identities (\text{\ref{ay1}})-(\text{\ref{ay11}}) when exactly one of the variables be in $M$ and others in $A$.
\end{defn}

It follows that the operations given in (\ref{aya-rep}) satisfy a list of 58 identities (3 identities obtained from (\text{\ref{ay1}}), 4 identities obtained from each (\text{\ref{ay2}})-(\text{\ref{ay6}}), 10 identities obtained from each (\text{\ref{ay7}}), (\text{\ref{ay9}}), and 5 identities obtained from each (\text{\ref{ay8}}), (\text{\ref{ay10}}), (\text{\ref{ay11}})).

The following result can characterize representations of an associative-Yamaguti algebra.

\begin{prop}\label{prop-assy-semid}
    Let $(A, ~ \! \cdot ~ \!, \{ ~, ~ , ~ \}, \{ \! \! \{ ~, ~, ~ \} \! \! \})$ be an associative-Yamaguti algebra and $M$ be a vector space equipped with linear maps as of (\ref{aya-rep}). For any $(a, u), (b, v), (c, w) \in A \oplus M$, we define
    \begin{align*}
        (a, u) \cdot_\ltimes (b, v) :=~& ( a \cdot b ~ \!, ~ \! a \cdot v + u \cdot b), \\
        \{ (a, u) , (b, v) , (c, w) \}_\ltimes :=~& (\{ a, b, c \} ~ \!, ~ \! \{ a, b, w \} + \{ a, v, c \} + \{u, b, c \} ),\\
         \! \! \! \{ (a, u) , (b, v) , (c, w) \} \! \! \}_\ltimes :=~& (\{ \! \! \{ a, b, c \} \! \! \} ~ \!, ~ \! \{ \! \! \{ a, b, w \} \! \! \} + \{ \! \! \{ a, v, c\} \! \! \} + \{ \! \! \{u, b, c \} \! \! \} ).
    \end{align*}
    Then $(A \oplus M, ~ \! \cdot_\ltimes ~ \!, \{ ~, ~ , ~ \}_\ltimes, \{ \! \! \{ ~, ~, ~ \} \! \! \}_\ltimes)$ is an associative-Yamaguti algebra if and only if the space $M$ with the maps (\ref{aya-rep}) is a representation of $(A, ~ \! \cdot ~ \!, \{ ~, ~ ,~ \}, \{ \! \! \{ ~, ~, ~ \} \! \! \})$.
\end{prop}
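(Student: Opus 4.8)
The plan is to exploit the fact that both the structure operations of $A \oplus M$ and the defining identities of an associative-Yamaguti algebra are of Loday type: in each term the variables occur in a fixed order, so the direct-sum decomposition $A \oplus M$ is respected slot by slot. Concretely, for each of the eleven axioms (\ref{ay1})--(\ref{ay11}) I would substitute elements $(a,u),(b,v),(c,w),(d,x),(e,y) \in A \oplus M$ and expand using the formulas for $\cdot_\ltimes$, $\{~,~,~\}_\ltimes$ and $\{\!\!\{~,~,~\}\!\!\}_\ltimes$. Each resulting expression lands in $A \oplus M$, and I would compare the two components separately.

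For the $A$-component, observe that the operations $\cdot_\ltimes$, $\{~,~,~\}_\ltimes$, $\{\!\!\{~,~,~\}\!\!\}_\ltimes$ reduce on the first coordinate to $\cdot$, $\{~,~,~\}$, $\{\!\!\{~,~,~\}\!\!\}$ applied to $a,b,c,\dots$ alone. Hence the $A$-component of each axiom for $A \oplus M$ is precisely the corresponding identity (\ref{ay1})--(\ref{ay11}) in $A$, which holds because $A$ is an associative-Yamaguti algebra. Thus the $A$-components impose no further condition and are automatically satisfied; the entire content of the statement lies in the $M$-components.

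For the $M$-component, the key structural observation is that, by the very definition of the semidirect-product operations, every term produced in an $M$-valued expression contains exactly one factor coming from $M$ (one of $u,v,w,x,y$) with all remaining factors in $A$. Consequently the $M$-component of each axiom is multilinear in the distinguished inputs, and it vanishes for all choices of $u,v,w,\dots$ if and only if it vanishes in each case where precisely one input slot is taken in $M$ and the rest set to zero in $A$. But each such specialized identity is exactly one of the relations obtained from (\ref{ay1})--(\ref{ay11}) by placing a single variable in $M$ and the others in $A$; that is, one of the $58$ defining relations of a representation listed after Definition of (\ref{aya-rep}).

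Putting these together, $(A \oplus M, \cdot_\ltimes, \{~,~,~\}_\ltimes, \{\!\!\{~,~,~\}\!\!\}_\ltimes)$ satisfies all of (\ref{ay1})--(\ref{ay11}) if and only if $M$ with the maps (\ref{aya-rep}) satisfies all $58$ representation identities, which yields both implications simultaneously. The only laborious part of the argument is the explicit bookkeeping: expanding each of the eleven axioms on $A \oplus M$ and matching the $M$-component term by term against the corresponding representation identities. I expect this matching to be routine, since each axiom merely distributes the single $M$-slot over its available positions, but it is the step requiring care, as one must track which of the three decomposed maps (with $M$ in the first, second or third argument) each term invokes.
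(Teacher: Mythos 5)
Your proposal is correct and takes essentially the approach the paper intends: the paper states this proposition without proof, treating it as a routine verification, and your argument is exactly that verification with a clean organizing principle. The three observations you isolate --- that the $A$-components of the axioms hold automatically, that every $M$-valued term contains exactly one $M$-factor because the semidirect-product operations are truncated (square-zero) on $M$, and that setting all but one $M$-variable to zero decomposes each $M$-component into precisely the $58$ one-variable-in-$M$ identities defining a representation --- supply both directions of the equivalence and constitute the bookkeeping the paper leaves implicit.
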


\begin{exam}
    Let $(A, ~ \! \cdot ~ \!)$ be an associative algebra and $(M, ~ \! \cdot ~ \! )$ be a bimodule over it. Define linear maps $\{ ~, ~ , ~ \} , \{ \! \! \{ ~, ~, ~ \} \! \! \} : \mathcal{A}^{2,1} \rightarrow M$ by
    \begin{align*}
        \{ a, b, u \} =~& \{ \! \! \{ a, b, u \} \! \! \} = (a \cdot b) \cdot u = a \cdot (b \cdot u), \qquad \{ a, u, b \} = \{ \! \! \{ a, u, b \} \! \! \} = (a \cdot u) \cdot b = a \cdot (u \cdot b), \\
       & \text{ and } \{ u, a, b \} = \{ \! \! \{ u, a, b \} \! \! \} = (u \cdot a) \cdot b = u \cdot (a \cdot b), \text{ for } a, b \in A, u \in M.
    \end{align*}
    Then the quadruple $(M, ~ \! \cdot ~ \!, \{ ~, ~, ~ \}, \{ \! \! \{ ~, ~, ~ \} \! \! \})$ is a representation of the associative-Yamaguti algebra $(A, ~ \! \cdot ~ \!, \{ ~, ~, ~ \}, \{ \! \! \{ ~, ~, ~ \} \! \! \})$ given in Example \ref{exam-ass-assy}.
\end{exam}

\begin{exam}
    Let $(A = A_0 \oplus A_1, ~ \! \cdot ~ \!)$ be a reductive associative algebra. A {\em reductive bimodule} over it, is a bimodule $(M = M_0 \oplus M_1, ~ \! \cdot ~ \!)$ over the associative algebra $(A = A_0 \oplus A_1, ~ \! \cdot ~ \!)$ for which
    \begin{align*}
        A_0 \cdot M_0 \subset M_0, \quad M_0 \cdot A_0 \subset M_0, \quad A_0 \cdot M_1 \subset M_1, \quad A_1 \cdot M_0 \subset M_1, \quad M_1 \cdot A_0 \subset M_1, \quad M_0 \cdot A_1 \subset M_1.
    \end{align*}
    For any $a, b \in A_1$ and $u \in M_1$, we define
    \begin{align*}
        &a \bullet u = \mathrm{pr}_{M_1} (a \cdot u), \quad u \bullet a = \mathrm{pr}_{M_1} (u \cdot a), \\
        &\{ a, b, u \} = (\mathrm{pr}_{A_0} (a \cdot b)) \cdot u, \qquad \{ a, u, b \} = (\mathrm{pr}_{M_0} (a \cdot u)) \cdot b, \qquad \{ u, a, b \} = (\mathrm{pr}_{M_0} (u \cdot a) )\cdot b,\\
       & \{ \! \! \{ a, b, u   \} \! \! \} =  a \cdot ( \mathrm{pr}_{M_0} (b \cdot u)), \qquad  \{ \! \! \{ a, u, b   \} \! \! \} =  a \cdot (\mathrm{pr}_{M_0} (u \cdot b)), \qquad  \{ \! \! \{ u, a, b   \} \! \! \} = u \cdot (\mathrm{pr}_{A_0} (a \cdot b)).
    \end{align*}
    Then the quadruple $(M_1, ~ \! \bullet ~ \! , \{ ~, ~, ~ \}, \{ \! \! \{ ~, ~, ~ \} \! \! \})$ is a representation of the associative-Yamaguti algebra $(A_1, ~ \! \bullet ~ \!, \{ ~, ~, ~ \}, \{ \! \! \{ ~, ~, ~ \} \! \! \})$ given in Example \ref{reductive}.
\end{exam}

\begin{exam}
    Let $(A, ~ \! \cdot ~ \!, \{ ~, ~, ~ \}, \{ \! \! \{ ~, ~, ~ \} \! \! \})$ be any associative-Yamaguti algebra. Then the quadruple $(A, ~ \! \cdot_{ad} ~ \! , \{ ~, ~, ~ \}_{ad}, \{ \! \! \{ ~, ~ ,~ \} \! \! \}_{ad})$ is obviously a representation, where
    \begin{align*}
        a \cdot_{ad} b = a \cdot b, \quad \{ a, b, c \}_{ad} = \{ a, b, c \}, \quad \{ \! \! \{ a, b, c \} \! \! \}_{ad} = \{ \! \! \{ a, b, c \} \! \! \}, \text{ for } a, b, c \in A.
    \end{align*}
    This is called the {\em adjoint representation} or the {\em regular representation}.
\end{exam}

\begin{exam}
    Let $(A, ~ \! \cdot ~ \!, \{ ~, ~, ~ \}, \{ \! \! \{ ~, ~, ~ \} \! \! \})$ and $(A', ~ \! \cdot' ~ \!, \{ ~, ~, ~ \}', \{ \! \! \{ ~, ~, ~ \} \! \! \}')$ be two associative-Yamaguti algebras, and $\varphi : A \rightarrow A'$ be a homomorphism between them. Then the tuple $(A', ~ \! \cdot_\varphi ~ \! , \{ ~, ~ , ~ \}_\varphi, \{ \! \! \{ ~, ~, ~ \} \! \! \}_\varphi)$ is a representation of the associative-Yamaguti algebra $(A, ~ \! \cdot ~ \!, \{ ~, ~, ~ \}, \{ \! \! \{ ~, ~, ~ \} \! \! \})$, where for $a, b \in A$ and $x \in A'$,
    \begin{align*}
        &a \cdot_\varphi x = \varphi (a) \cdot' x, \quad x \cdot_\varphi a = x \cdot' \varphi (a), \\
        &\{ a, b, x \}_\varphi = \{ \varphi (a), \varphi (b), x \}', \quad \{ a, x, b \}_\varphi = \{ \varphi (a), x, \varphi (b) \}', \quad \{ x, a, b \}_\varphi = \{ x, \varphi (a), \varphi (b) \}',\\
        &\{ \! \! \{ a, b, x \} \! \! \}_\varphi = \{ \! \! \{ \varphi (a), \varphi (b), x  \} \! \! \}', \quad \{ \! \! \{ a, x, b  \} \! \! \}_\varphi = \{ \! \! \{ \varphi (a), x, \varphi (b)  \} \! \! \}', \quad \{ \! \! \{ x, a, b  \} \! \! \}_\varphi = \{ \! \! \{ x, \varphi (a), \varphi (b)  \} \! \! \}'.
    \end{align*}
\end{exam}

\begin{exam}
    Let $(A, \{ ~, ~, ~ \})$ be an associative triple system. Recall that \cite{carlsson} a {\em representation} of $(A, \{ ~, ~, ~ \})$ is a vector space $M$ endowed with a linear map (denoted by the same notation) $\{ ~, ~ , ~ \} : \mathcal{A}^{2,1} \rightarrow M$ satisfying total $10$ identities, which are obtained from the two identities given in (\ref{asst-iden}) by letting exactly one of the variables $a, b, c, d, e$ be in $M$ and others in $A$. A representation of the associative triple system $(A, \{ ~, ~, ~ \})$ can be seen as a representation of the corresponding associative-Yamaguti algebra $(A, ~ \! \cdot ~ \! , \{ ~, ~, ~ \}, \{ ~, ~ , ~ \})$ given in Example \ref{exam-asst}.
\end{exam}

\begin{exam}
    Let $(D, \dashv, \vdash)$ be a diassociative algebra. Recall that \cite{loday,frab} a {\em representation} of $(D, \dashv, \vdash)$ is a vector space $M$ equipped with four linear maps
    \begin{align*}
        \dashv ~: D \otimes M \rightarrow M, \quad \dashv~ : M \otimes D \rightarrow M, \quad \vdash~ : D \otimes M \rightarrow M, \quad \vdash~ : M \otimes D \rightarrow M
    \end{align*}
    so that the first two operations make $M$ into a bimodule over the associative algebra $(D, \dashv)$ and the last two operations make $M$ into a bimodule over the associative algebra $(D, \vdash)$ satisfying additionally the identities in (\ref{dialg-iden}) when exactly one of the variable be in $M$ and others in $D$. In this case, we define maps
    \begin{align*}
        &a \cdot u = a \dashv u + a \vdash u, \qquad u \cdot a = u \dashv a + u \vdash a, \\
        &\{ a, b, u \} = - (a \vdash b) \vdash u, \quad \{ a, u, b \} = - (a \vdash u) \vdash b, \quad \{ u, a, b \} = - (u \vdash a) \vdash b, \\
        & \{ \! \! \{  a, b, u  \} \! \! \} = - ( a \dashv b ) \dashv u,  \quad  \{ \! \! \{ a, u, b   \} \! \! \} = - (a  \dashv  u) \dashv b, \quad  \{ \! \! \{ u, a, b   \} \! \! \} = - ( u \dashv  a) \dashv  b,  
    \end{align*}
    for $a, b \in D$ and $u \in M$. Then the quadruple $(M, ~ \! \cdot ~ \! , \{ ~, ~, ~ \}, \{ \! \! \{ ~, ~, ~ \} \! \! \})$ is a representation of the associative-Yamaguti algebra $(D, ~  \! \cdot ~ \!, \{ ~, ~, ~ \}, \{ \! \! \{ ~, ~, ~ \} \! \! \})$ obtained in Theorem \ref{diass-assy}.
\end{exam}

\begin{prop}
    Let $(A, ~ \! \cdot ~ \!, \{ ~, ~,  ~ \}, \{ \! \! \{ ~, ~, ~ \} \! \! \})$ be an associative-Yamaguti algebra and $(M, ~ \! \cdot ~ \! , \{ ~,~ , ~ \}, \{ \! \! \{ ~, ~, ~ \} \! \! \})$ be a representation of it. We define linear maps $\rho : A \rightarrow \mathrm{End}(M)$ and $\nu : A \otimes A \rightarrow \mathrm{End}(M)$ by
    \begin{align}\label{rho-mu}
        \rho (a) u := a \cdot u - u \cdot a  \quad \text{ and } \quad
        \nu(a, b) u := \{ u, a, b \} - \{ a, u, b \} - \{ \! \! \{ b, u, a \} \! \! \} + \{ \! \! \{ b, a, u \} \! \! \},
    \end{align}
    for $a, b \in A$ and $u \in M$. Then the triple $(M, \rho, \nu)$ is a representation of the Lie-Yamaguti algebra $(A, [~,~], \llbracket ~, ~, ~ \rrbracket)$ given in Theorem \ref{assy-liey}.
\end{prop}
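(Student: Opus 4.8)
The plan is to avoid a head-on verification of the five defining identities of a Lie-Yamaguti representation, and instead to chain together the three structural results already available: the semidirect-product characterizations of Proposition~\ref{prop-assy-semid} and Proposition~\ref{prop-zhang}, together with the skew-symmetrization functor of Theorem~\ref{assy-liey}. Since $(M, ~ \! \cdot ~ \!, \{ ~, ~, ~ \}, \{ \! \! \{ ~, ~, ~ \} \! \! \})$ is a representation of the associative-Yamaguti algebra $A$, Proposition~\ref{prop-assy-semid} tells us that the semidirect product $(A \oplus M, ~ \! \cdot_\ltimes ~ \!, \{ ~, ~, ~ \}_\ltimes, \{ \! \! \{ ~, ~, ~ \} \! \! \}_\ltimes)$ is itself an associative-Yamaguti algebra. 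Applying Theorem~\ref{assy-liey} to this algebra endows $A \oplus M$ with a Lie-Yamaguti algebra structure. The goal is to show that this structure is \emph{exactly} the semidirect product Lie-Yamaguti algebra built from $(A, [~,~], \llbracket ~, ~, ~ \rrbracket)$ and the triple $(M, \rho, \nu)$; once this is done, Proposition~\ref{prop-zhang} immediately gives that $(M, \rho, \nu)$ is a representation.

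First I would compute the skew-symmetrized bracket on $A \oplus M$. Using the formula of Theorem~\ref{assy-liey} and the definition of $\cdot_\ltimes$, one finds
\begin{align*}
    [(a, u), (b, v)] = (a, u) \cdot_\ltimes (b, v) - (b, v) \cdot_\ltimes (a, u) = \big( [a, b] ~ \!, ~ \! a \cdot v - v \cdot a - b \cdot u + u \cdot b \big),
\end{align*}
and the $M$-component is read off as $\rho(a) v - \rho(b) u$ directly from the definition of $\rho$. This is precisely the binary operation $[~,~]_\ltimes$ of the semidirect product attached to $(\rho, \nu)$.

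Next I would expand the skew-symmetrized ternary operation $\llbracket (a,u),(b,v),(c,w) \rrbracket$ using the four-term formula $\llbracket a,b,c \rrbracket = \{ a,b,c \} - \{ b,a,c \} - \{ \! \! \{ c,a,b \} \! \! \} + \{ \! \! \{ c,b,a \} \! \! \}$ of Theorem~\ref{assy-liey}, together with the definitions of $\{ ~, ~, ~ \}_\ltimes$ and $\{ \! \! \{ ~, ~, ~ \} \! \! \}_\ltimes$. The first component is $\llbracket a, b, c \rrbracket$, and the $M$-component splits according to which of the three slots carries the $M$-entry. The terms with the $M$-entry in the first slot collapse to $\nu(b, c) u$ and those in the second slot to $-\nu(a, c) v$, both immediately from the definition of $\nu$; the terms with the $M$-entry in the third slot assemble into some operator $D'(a, b)$ applied to $w$. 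Thus the ternary has exactly the shape $\big( \llbracket a, b, c \rrbracket, D'(a,b) w + \nu(b,c) u - \nu(a,c) v \big)$ of the semidirect product $\llbracket ~, ~, ~ \rrbracket_\ltimes$, and the only point still to be settled is the identity $D'(a, b) = D_{\rho, \nu}(a, b)$.

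The verification of $D'(a,b) = D_{\rho, \nu}(a, b)$ is the one genuinely computational step, and it is where I expect the main difficulty to lie, since a direct expansion of $D_{\rho,\nu}(a,b)w = [\rho(a),\rho(b)]w - \rho([a,b])w - \nu(a,b)w + \nu(b,a)w$ against the third-slot terms would require invoking the representation versions of the identities (\ref{ay1})--(\ref{ay11}). I would sidestep this by a short-cut: because $A \oplus M$ is \emph{already} known to be a Lie-Yamaguti algebra by Theorem~\ref{assy-liey}, evaluating the identity (\ref{ly1}) on the triple $(a,0),(b,0),(c,w)$ and reading off the $M$-component forces, after the binary contributions $\rho([a,b]) - [\rho(a),\rho(b)]$ are accounted for, the relation $D'(a,b) = [\rho(a),\rho(b)] - \rho([a,b]) + \nu(b,a) - \nu(a,b) = D_{\rho,\nu}(a,b)$. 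With all operations now matched, the Lie-Yamaguti structure produced by skew-symmetrization coincides with the candidate semidirect product, and Proposition~\ref{prop-zhang} completes the proof.
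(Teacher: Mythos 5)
Your proposal is correct, and its overall architecture is exactly that of the paper's own proof: build the semidirect product associative-Yamaguti algebra via Proposition \ref{prop-assy-semid}, skew-symmetrize it via Theorem \ref{assy-liey}, identify the resulting operations with the semidirect-product formulas of Proposition \ref{prop-zhang}, and then invoke that proposition. The one genuine point of divergence is how the identification $D'(a,b)=D_{\rho,\nu}(a,b)$ is settled. The paper buries this step in the phrase ``by expanding the right-hand sides \dots it is not hard to see,'' which, unpacked, is a direct computation: one must rewrite $[\rho(a),\rho(b)]w-\rho([a,b])w-\nu(a,b)w+\nu(b,a)w$ using the representation versions of (\ref{ay1}) (six instances, with $w$ placed in each slot and $a,b$ in both orders) and watch everything cancel down to $\{a,b,w\}-\{b,a,w\}-\{ \! \! \{ w,a,b \} \! \! \}+\{ \! \! \{ w,b,a \} \! \! \}$. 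Your shortcut avoids this entirely: since $A\oplus M$ is already known to be a Lie-Yamaguti algebra, evaluating (\ref{ly1}) on $(a,0),(b,0),(c,w)$ and reading the $M$-component gives $\big(\rho([a,b])-[\rho(a),\rho(b)]\big)w+D'(a,b)w+\nu(a,b)w-\nu(b,a)w=0$, which forces $D'(a,b)=[\rho(a),\rho(b)]-\rho([a,b])-\nu(a,b)+\nu(b,a)=D_{\rho,\nu}(a,b)$. I checked the sign bookkeeping and it is right; the inputs you need for this step (the $u$- and $v$-slot coefficients being $\nu(b,c)$ and $-\nu(a,c)$, and the bracket being $\rho$) are indeed immediate from the definitions. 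So your variant replaces the only nontrivial computation in the paper's argument by a structural one-line deduction, at no cost in rigor.
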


\begin{proof}
    Since $(M, ~ \! \cdot ~ \! , \{ ~,~ , ~ \}, \{ \! \! \{ ~, ~, ~ \} \! \! \})$ is a representation of $(A, ~ \! \cdot ~ \!, \{ ~, ~ ,~ \}, \{ \! \! \{ ~, ~, ~ \} \! \! \})$, one may construct the semidirect product associative-Yamaguti algebra $(A \oplus M, ~ \! \cdot_\ltimes ~ \! , \{ ~, ~, ~ \}_\ltimes, \{ \! \! \{ ~, ~, ~ \} \! \! \}_\ltimes)$ given in Proposition \ref{prop-assy-semid}. Hence by following Theorem \ref{assy-liey}, one have a Lie-Yamaguti algebra $(A \oplus M, [~,~]_\ltimes , \llbracket ~, ~ , ~ \rrbracket_\ltimes)$, where
    \begin{align*}
        [ (a, u), (b, v)]_\ltimes :=~& (a, u) \cdot_\ltimes (b, v) ~ - ~ (b, v) \cdot_\ltimes (a, u), \\
        \llbracket (a, u), (b, v) , (c, w) \rrbracket_\ltimes :=~& \{ (a, u), (b, v) , (c, w) \}_\ltimes - \{  (b, v), (a, u), (c, w) \}_\ltimes \\
        & - \{ \! \! \{ (c, w), (a, u), (b, v)    \} \! \! \}_\ltimes ~+~ \{ \! \! \{  (c, w), (b, v), (a, u)   \} \! \! \}_\ltimes,
    \end{align*}
    for $(a, u), (b, v), (c, w) \in A \oplus M$. By expanding the right-hand sides of the above expressions, it is not hard to see that
    \begin{align*}
        [ (a, u), (b, v)]_\ltimes =~& ( [a, b], \rho (a) v - \rho (b) u),\\
        \llbracket (a, u), (b, v) , (c, w) \rrbracket_\ltimes =~& ( \llbracket a, b, c \rrbracket, D_{\rho, \nu} (a, b) w + \nu (b, c) u - \nu(a, c) v),
    \end{align*}
    where the maps $\rho$ and $\nu$ (and hence $D_{\rho, \nu}$) are given in (\ref{rho-mu}). Hence by Proposition \ref{prop-zhang}, one obtains that $(M, \rho, \nu)$ is a representation of the Lie-Yamaguti algebra $(A, [~,~], \llbracket ~, ~, ~ \rrbracket)$.
\end{proof}



In the following, we define the $(2,3)$-th cohomology group of an associative-Yamaguti algebra with coefficients in a representation. However, the full cochain complex of an associative-Yamaguti algebra has yet to be discovered.

\begin{defn}
    Let $(A, ~ \! \cdot ~ \! , \{ ~, ~ , ~ \}, \{ \! \! \{ ~, ~, ~ \} \! \! \})$ be an associative-Yamaguti algebra and $(M, ~ \! \cdot ~ \!, \{ ~, ~ ,~ \}, \{ \! \! \{ ~, ~, ~ \} \! \! \})$ be a representation. A {\bf (2,3)-cocycle} of $A$ with coefficients in $M$ is a triple $(\mu, F, G)$ consisting of linear maps $\mu : A^{\otimes 2} \rightarrow M$ and $F, G : A^{\otimes 3} \rightarrow M$ that satisfy the following list of identities:
    \begin{equation}
        \mu (a, b ) \cdot c + \mu (a \cdot b , c) - a \cdot \mu (b, c) - \mu (a, b \cdot c) + F (a, b, c) - G (a, b, c) = 0,       
    \end{equation}
    \begin{equation}
  \{ \mu (a, b) , c, d \} + F (a \cdot b, c, d) = \{ a, \mu (b, c), d \} + F (a, b \cdot c, d),      
    \end{equation}
    \begin{equation}
      \{ a, b, \mu (c, d) \} + F (a, b, c \cdot d) = \mu (\{ a, b, c \} , d) + F (a, b, c) \cdot d,  
    \end{equation}
    \begin{equation}
      \{ \! \! \{ \mu (a, b), c, d \} \! \! \} + G (a \cdot b, c, d) = \mu (a, \{ \! \! \{ b, c, d \} \! \! \}) + a \cdot G (b, c, d),  
    \end{equation}
    \begin{equation}
        \{ \! \! \{ a, \mu (b, c) , d \} \! \! \} + G (a, b \cdot c, d) = \{ \! \! \{ a, b, \mu (c, d) \} \! \! \} + G (a, b, c \cdot d),
    \end{equation}
    \begin{equation}
       \mu (a, \{ b, c, d \}) + a \cdot F (b, c, d) = \mu ( \{ \! \! \{ a, b, c \} \! \! \}, d) + G (a, b, c) \cdot d, 
    \end{equation}
    \begin{equation}
          \{ F (a, b, c ) , d, e \} + F (\{ a, b, c \}, d, e) = \{ a, G (b, c, d), e \} + F (a, \{ \! \! \{ b, c, d \} \! \! \} , e) = \{a, b, F( c, d, e ) \} + F (a, b, \{ c, d, e \}),
    \end{equation}
    \begin{equation}
         \{ a, F (b, c, d ), e \} + F  (a, \{b, c, d \}, e) = \{ G (a, b, c ), d,e \} + F (\{ \! \! \{ a, b, c \} \! \! \}, d,e ),
    \end{equation}
    \begin{equation}
       \{ \! \! \{  G (a, b, c), d, e       \} \! \! \} + G ( \{ \! \! \{  a, b, c       \} \! \! \}, d, e) =  \{ \! \! \{  a, F (b, c, d), e       \} \! \! \} + G (a, \{ b, c, d \} , e ) =  \{ \! \! \{  a, b, G( c, d, e)       \} \! \! \} + G (a, b,  \{ \! \! \{   c, ,d ,e      \} \! \! \}), 
    \end{equation}
    \begin{equation}
        \{ \! \! \{    a, G (b, c, d) , e    \} \! \! \} + G (a,  \{ \! \! \{  b, c, d       \} \! \! \}, e) =  \{ \! \! \{ a, b, F (c, d, e)        \} \! \! \} + G (a,b, \{c, d, e \}),
    \end{equation}
    \begin{equation}
        \{ a, b, G (c, d, e) \} + F (a, b,  \{ \! \! \{  c, d,e        \} \! \! \}) =  \{ \! \! \{   F (a, b, c), d, e      \} \! \! \} + G (\{ a, b, c \}, d, e),
    \end{equation}
    for all $a, b, c, d, e \in A$. The space of all $(2,3)$-cocycles is an abelian group under componentwise addition. We denote this space by $\mathcal{Z}^ {(2,3)} (A, M)$.
\end{defn}

The next result provides a characterization of $(2,3)$-cocycles, and the proof is straightforward.

\begin{prop}\label{twisted semid}
    Let $(A, ~ \! \cdot ~ \! , \{ ~, ~ , ~ \}, \{ \! \! \{ ~, ~, ~ \} \! \! \})$ be an associative-Yamaguti algebra and $(M, ~ \! \cdot ~ \!, \{ ~, ~ , ~ \}, \{ \! \! \{ ~, ~, ~ \} \! \! \})$ be a representation of it. Suppose there are linear maps $\mu : A^{\otimes 2} \rightarrow M$ and $F, G : A^{\otimes 3} \rightarrow M$. For any $(a, u) , (b, v) , (c, w) \in A \oplus M$, we define
    \begin{align}
        (a, u) \cdot_{\ltimes_\mu} (b, v) :=~& \big( a \cdot b ~ \!, ~ \! a \cdot v + u \cdot b + \mu (a, b) \big), \label{twisted-mu}\\
        \{ (a, u), (b, v), (c, w ) \}_{\ltimes_F} :=~& \big( \{ a, b, c \} ~ \!, ~ \! \{ a, b, w \} +  \{ a, v, c\}  +  \{u, b, c \} + F (a, b, c) \big),\\ 
        \{ \! \! \{ (a, u), (b, v), (c, w )  \} \! \! \}_{\ltimes_G} :=~&  \big(\{ \! \! \{ a, b, c \} \! \! \} ~ \!, ~ \! \{ \! \! \{ a, b, w \} \! \! \} + \{ \! \! \{ a, v, c\} \! \! \} + \{ \! \! \{u, b, c \} \! \! \} + G (a, b, c) \big).
    \end{align}
    Then $(\mu, F, G)$ is a $(2,3)$-cocycle if and only if the triple $(A \oplus M, ~ \! \cdot_{\ltimes_\mu}  ~ \!, \{ ~, ~, ~ \}_{\ltimes_F} ,   \{ \! \! \{ ~, ~, ~  \} \! \! \}_{\ltimes_G}  )$ is an associative-Yamaguti algebra.
\end{prop}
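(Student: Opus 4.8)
The plan is to verify the eleven associative-Yamaguti axioms (\ref{ay1})--(\ref{ay11}) directly for the quadruple $(A \oplus M, \cdot_{\ltimes_\mu}, \{~,~,~\}_{\ltimes_F}, \{\!\!\{~,~,~\}\!\!\}_{\ltimes_G})$ and to read off, in each case, precisely one of the listed $(2,3)$-cocycle conditions. The organizing principle is that the present structure is a deformation of the semidirect product of Proposition \ref{prop-assy-semid} by the triple $(\mu, F, G)$: setting $\mu = F = G = 0$ in (\ref{twisted-mu}) and the two following displays recovers exactly the operations $\cdot_\ltimes, \{~,~,~\}_\ltimes, \{\!\!\{~,~,~\}\!\!\}_\ltimes$, which form an associative-Yamaguti algebra because $M$ is a representation.

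First I would observe that for any axiom, the $A$-component of both sides is simply that axiom for $A$, which holds since $A$ is an associative-Yamaguti algebra; so only the $M$-component requires attention. Next comes the key bookkeeping observation: because each structure map is \emph{linear} in a single $M$-slot, and because $\mu, F, G$ accept inputs only from $A$, expanding the $M$-component of any axiom on inputs $(a,u),(b,v),(c,w),\dots$ produces only two kinds of terms --- \emph{representation terms}, which are linear in exactly one of $u,v,w,\dots$ and contain no $\mu, F, G$, and \emph{cocycle terms}, which contain exactly one factor of $\mu, F$ or $G$ and no $M$-variable. There are no mixed terms and no terms of higher order: feeding the $M$-output of an inner operation (itself of these two types) into the single $M$-slot of an outer operation never combines an $M$-variable with a $\mu/F/G$ factor, while the fresh deformation term contributed by the outer operation is always applied to pure $A$-arguments.

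By multilinearity the $M$-component of each axiom therefore vanishes for all inputs if and only if its representation part and its cocycle part vanish separately. The representation part is exactly the $M$-component of the corresponding axiom for the undeformed semidirect product, hence vanishes by Proposition \ref{prop-assy-semid}. Setting all $M$-variables to zero isolates the cocycle part, and a short computation shows that axiom (\ref{ay1}) yields the first displayed cocycle identity, axioms (\ref{ay2})--(\ref{ay6}) yield the next five, and (\ref{ay7})--(\ref{ay11}) yield the remaining five (the multi-equation axioms (\ref{ay7}) and (\ref{ay9}) producing the correspondingly longer cocycle relations). For instance, taking $u=v=w=0$ in (\ref{ay1}) gives $\mu(a,b)\cdot c + \mu(a\cdot b,c) - a\cdot\mu(b,c) - \mu(a,b\cdot c) + F(a,b,c) - G(a,b,c)=0$, which is precisely the first condition.

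The computation is entirely routine; the only real work is careful bookkeeping across all eleven axioms, and the main obstacle is simply to confirm that the term-separation above is exhaustive --- that is, that each axiom contributes one and only one cocycle relation and that no cross terms are overlooked. Granting this, the equivalence follows, establishing that $(A \oplus M, \cdot_{\ltimes_\mu}, \{~,~,~\}_{\ltimes_F}, \{\!\!\{~,~,~\}\!\!\}_{\ltimes_G})$ is an associative-Yamaguti algebra if and only if $(\mu,F,G)\in\mathcal{Z}^{(2,3)}(A,M)$.
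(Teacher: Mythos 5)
Your proposal is correct and takes essentially the same route as the paper, which offers no written argument beyond declaring the proof straightforward (i.e., a direct verification of the eleven axioms on $A \oplus M$): your organization of that verification — noting the $A$-component of each axiom holds automatically, splitting the $M$-component by multilinearity into a representation part (which vanishes by Proposition \ref{prop-assy-semid}) and a cocycle part containing exactly one factor of $\mu$, $F$ or $G$ (isolated by setting the $M$-variables to zero), with no cross terms since $\mu, F, G$ only ever receive pure $A$-inputs — is precisely the efficient way to carry out that check. Your sample computation for (\ref{ay1}) recovers the first cocycle identity, and the same bookkeeping matches the remaining axioms to the remaining ten cocycle conditions, so the equivalence follows.
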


Let $f : A \rightarrow M$ be a linear map. Then we define linear maps $\mu_f : A^{\otimes 2} \rightarrow M$ and $F_f, G_f : A^{\otimes 3} \rightarrow M$ by
\begin{align}\label{cob-cocycle}
\begin{cases}
    \mu_f (a, b) := f(a) \cdot b + a \cdot f(b) - f (a \cdot b),\\
    F_f (a, b, c ) := \{ f(a), b, c \} + \{ a, f(b) , c \} + \{ a, b, f(c) \} - f (\{ a, b, c \}),\\
    G_f (a, b, c) := \{ \! \! \{  f(a) , b, c   \} \! \! \} +  \{ \! \! \{ a, f(b) , c    \} \! \! \} +  \{ \! \! \{  a, b, f(c)   \} \! \! \} - f ( \{ \! \! \{ a, b, c    \} \! \! \}),
\end{cases}
\end{align}
for $a, b, c \in A$. A linear map $f : A \rightarrow M$ is said to be a {\bf derivation} on $A$ with values in $M$ if the triple $(\mu_f, F_f , G_f)$ is trivial. In general, we have the following.
\begin{prop}
    For any linear map $f : A \rightarrow M$, the triple $(\mu_f, F_f, G_f)$ is a $(2,3)$-cocycle.
\end{prop}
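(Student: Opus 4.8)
The plan is to derive the eleven cocycle identities in one stroke from Proposition~\ref{twisted semid}, using the graph of $f$ as an isomorphism of the underlying vector space $A \oplus M$. The idea is that the triple $(\mu_f, F_f, G_f)$ records exactly the correction terms produced by transporting the semidirect product structure along this graph, so that no direct manipulation of the cocycle equations is needed.

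Concretely, I would first introduce the linear isomorphism $\phi : A \oplus M \rightarrow A \oplus M$, $\phi(a, u) = (a, u + f(a))$, with inverse $\phi^{-1}(a, u) = (a, u - f(a))$. Since $(M, ~ \! \cdot ~ \!, \{ ~, ~, ~ \}, \{ \! \! \{ ~, ~, ~ \} \! \! \})$ is a representation, Proposition~\ref{prop-assy-semid} guarantees that the semidirect product $(A \oplus M, ~ \! \cdot_\ltimes ~ \!, \{ ~, ~, ~ \}_\ltimes, \{ \! \! \{ ~, ~, ~ \} \! \! \}_\ltimes)$ is an associative-Yamaguti algebra. I would then transport this structure along $\phi$, setting $x \cdot' y := \phi^{-1}\big( \phi(x) \cdot_\ltimes \phi(y) \big)$ and defining the two ternary operations analogously. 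Because $\phi$ is a linear isomorphism and all the defining relations (\ref{ay1})--(\ref{ay11}) are multilinear, the transported quadruple is again an associative-Yamaguti algebra.

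The crux is the computation identifying this transported structure with the twisted one of Proposition~\ref{twisted semid} for $(\mu, F, G) = (\mu_f, F_f, G_f)$. For the binary operation a direct expansion gives
\begin{align*}
    \phi^{-1}\big( \phi(a, u) \cdot_\ltimes \phi(b, v) \big) = \big( a \cdot b ~ \!, ~ \! a \cdot v + u \cdot b + \big( f(a) \cdot b + a \cdot f(b) - f(a \cdot b) \big) \big),
\end{align*}
and the bracketed term is precisely $\mu_f(a, b)$ by (\ref{cob-cocycle}); the same unfolding of $\{ ~, ~, ~ \}_\ltimes$ and $\{ \! \! \{ ~, ~, ~ \} \! \! \}_\ltimes$ through $\phi$ collects the terms $\{ f(a), b, c \} + \{ a, f(b), c \} + \{ a, b, f(c) \} - f(\{ a, b, c \})$ and its double-bracket analogue, which are exactly $F_f(a, b, c)$ and $G_f(a, b, c)$. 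Thus the transported operations coincide with $\cdot_{\ltimes_{\mu_f}}$, $\{ ~, ~, ~ \}_{\ltimes_{F_f}}$ and $\{ \! \! \{ ~, ~, ~ \} \! \! \}_{\ltimes_{G_f}}$.

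Having exhibited $(A \oplus M, ~ \! \cdot_{\ltimes_{\mu_f}} ~ \!, \{ ~, ~, ~ \}_{\ltimes_{F_f}}, \{ \! \! \{ ~, ~, ~ \} \! \! \}_{\ltimes_{G_f}})$ as an associative-Yamaguti algebra, I would invoke the ``only if'' direction of Proposition~\ref{twisted semid} to conclude that $(\mu_f, F_f, G_f)$ is a $(2,3)$-cocycle. The only genuine work is the bookkeeping in the transport step, which merely redistributes the $f$-correction terms; this conceptual route sidesteps the otherwise formidable obstacle of checking the eleven cocycle identities one by one against the fifty-eight representation axioms.
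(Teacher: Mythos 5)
Your proof is correct, but it takes a genuinely different route from the paper's. The paper argues by direct computation: it expands $\mu_f$, $F_f$, $G_f$ inside each cocycle identity and regroups terms so that each identity collapses to the corresponding axiom among (\ref{ay1})--(\ref{ay11}) evaluated with one argument replaced by an $f$-image (it writes out the first six identities explicitly and declares the remaining five similar). You instead transport the semidirect product structure of Proposition \ref{prop-assy-semid} along the shear $\phi(a,u) = (a, u + f(a))$, identify the transported operations with the twisted ones $\cdot_{\ltimes_{\mu_f}}$, $\{ ~, ~, ~ \}_{\ltimes_{F_f}}$, $\{ \! \! \{ ~, ~, ~ \} \! \! \}_{\ltimes_{G_f}}$ of Proposition \ref{twisted semid}, and then invoke that proposition. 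This is sound: transport along a linear bijection preserves all the multilinear identities, your identification computation is exactly right, and there is no circularity since Proposition \ref{twisted semid} precedes the statement and is proved independently of it. What your approach buys is uniformity --- all eleven identities (equivalently all fifty-eight representation-level conditions) are handled at one stroke --- and it is in fact the same device the paper itself deploys later, in the proof of Theorem \ref{thm-abelian}, where the map (\ref{phi-map}) shows that cohomologous cocycles yield isomorphic abelian extensions; your proposition is essentially the special case that $(\mu_f, F_f, G_f)$ is cohomologous to $(0,0,0)$. What it costs is self-containedness: the computational content is pushed into Proposition \ref{twisted semid}, whose proof the paper omits as ``straightforward,'' so an auditor of your argument must still perform a verification of comparable size there, whereas the paper's proof works directly from the axioms. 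Two cosmetic slips, neither affecting correctness: the map $\phi$ is not ``the graph of $f$'' (that is the subspace $\{ (a, f(a)) ~ | ~ a \in A \}$) but the shear along it; and the direction of Proposition \ref{twisted semid} you need --- associative-Yamaguti structure implies cocycle --- is its ``if'' direction, not the ``only if'' direction.
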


\begin{proof}
Let $a, b, c, d, e \in A$ be arbitrary. Then by using the definitions of $\mu_f$, $F_f$ and $G_f$, we simply get
\begin{align*}
    &\mu_f (a, b) \cdot c + \mu_f (a \cdot b, c) - a \cdot \mu_f (b, c) - \mu_f (a, b \cdot c) + F_f (a, b, c) - G_f (a, b, c) \\
    &= \big(  (f(a) \cdot b) \cdot c - f(a) \cdot (b \cdot c) + \{ f(a), b, c \} - \{ \! \! \{ f(a) , b , c \} \! \! \}  \big) + \big(    (a \cdot f(b)) \cdot c - a \cdot (f(b) \cdot c)  \\
    & \qquad + \{ a, f(b), c \} - \{ \! \! \{ a, f(b) , c \} \! \! \}  \big) + \big(   (a \cdot b) \cdot f(c) - a \cdot (b \cdot f(c)) + \{ a, b, f(c) \} - \{ \! \! \{ a, b, f(c) \} \! \! \}   \big) \\
    & \qquad - f \big(   (a \cdot b) \cdot c - a \cdot (b \cdot c) + \{ a, b, c \} - \{ \! \! \{ a, b, c \} \! \! \}   \big) \stackrel{(\text{\ref{ay1}})}{=} 0,
\end{align*}
\begin{align*}
    &\{ \mu_f(a, b) , c, d \} + F_f (a \cdot b , c, d) - \{ a, \mu_f (b, c) , d \} - F_f (a, b \cdot c, d) \\
   & = \big(   \{  f( a) \cdot b, c, d \} - \{ f(a), b \cdot c, d \}  \big) + \big(   \{   a \cdot f( b), c, d \} - \{ a, f(b) \cdot c, d \}  \big) + \big(   \{   a \cdot b, f(c), d \} - \{ a, b \cdot f(c), d \}  \big) \\
    & \qquad \qquad + \big(   \{   a \cdot b, c, f(d) \} - \{ a, b \cdot c, f( d) \}  \big) - f \big(   \{   a \cdot b, c, d \} - \{ a, b \cdot c, d \}  \big) \stackrel{(\text{\ref{ay2}})}{=} 0,
\end{align*}
\begin{align*}
    &\{ a, b, \mu_f (c, d) \} + F_f (a, b, c\cdot d) - \mu_f (\{ a, b, c \}, d) - F_f (a, b, c) \cdot d \\
    &= \big(  \{ f(a), b, c \cdot d \} - \{ f(a), b, c \} \cdot d   \big) + \big(  \{ a, f(b), c \cdot d \} - \{ a, f(b), c \} \cdot d   \big) + \big(  \{ a, b, f(c) \cdot d \} - \{ a, b, f(c) \} \cdot d   \big) \\
    & \qquad \qquad + \big(  \{ a, b, c \cdot f(d) \} - \{ a, b, c \} \cdot f(d)   \big) - f \big(  \{ a, b, c \cdot d \} - \{ a, b, c \} \cdot d   \big) \stackrel{(\text{\ref{ay3}})}{=} 0,
\end{align*}
\begin{align*}
    &\{ \! \! \{  \mu_f (a, b) , c, d \} \! \! \} + G_f (a \cdot b, c, d ) - \mu_f (a, \{ \! \! \{ b, c, d \} \! \! \}) - a \cdot G_f (b, c, d ) \\
    &= \big(    \{ \! \! \{ f(a) \cdot b, c, d  \} \! \! \} -  f(a) \cdot  \{ \! \! \{ b, c, d \} \! \! \} \big) + \big(    \{ \! \! \{ a \cdot f(b), c, d  \} \! \! \} -  a \cdot  \{ \! \! \{ f(b), c, d \} \! \! \} \big)  \\
    & \qquad + \big(    \{ \! \! \{ a \cdot b, f(c), d  \} \! \! \} -  a \cdot  \{ \! \! \{ b, f( c), d \} \! \! \} \big) + \big(    \{ \! \! \{ a \cdot b, c, f(d)  \} \! \! \} -  a \cdot  \{ \! \! \{ b, c, f(d) \} \! \! \} \big) \\
    & \qquad \qquad - f \big(    \{ \! \! \{ a \cdot b, c, d  \} \! \! \} -  a \cdot  \{ \! \! \{ b, c, d \} \! \! \} \big) \stackrel{(\text{\ref{ay4}})}{=} 0,
\end{align*}
\begin{align*}
    &\{ \! \! \{ a, \mu_f (b, c), d \} \! \! \} + G_f (a, b \cdot c, d) - \{ \! \! \{a, b, \mu_f (c, d) \} \! \! \} - G_f (a, b, c \cdot d) \\
    &= \big(  \{ \! \! \{ f(a), b \cdot c , d \}\! \! \} - \{ \! \! \{ f(a), b, c \cdot d \} \! \! \} \big) + \big(  \{ \! \! \{ a, f(b) \cdot c , d \}\! \! \} - \{ \! \! \{ a, f(b), c \cdot d \} \! \! \} \big)\\
    & \qquad + \big(  \{ \! \! \{ a, b \cdot f(c) , d \}\! \! \} - \{ \! \! \{ a, b, f(c) \cdot d \} \! \! \} \big) + \big(  \{ \! \! \{ a, b \cdot c , f(d) \}\! \! \} - \{ \! \! \{ a, b, c \cdot f(d) \} \! \! \} \big) \\
    & \qquad \qquad - f \big(  \{ \! \! \{ a, b \cdot c , d \}\! \! \} - \{ \! \! \{ a, b, c \cdot d \} \! \! \} \big) \stackrel{(\text{\ref{ay5}})}{=} 0,
\end{align*}
\begin{align*}
    &\mu_f (a, \{ b, c, d \}) + a \cdot F_f (b, c, d) - \mu_f (\{ \! \! \{ a, b, c \} \! \! \}, d) - G_f (a, b, c) \cdot d \\
    & = \big(   f(a) \cdot \{ b, c, d \} - \{ \! \! \{ f(a), b, c \} \! \! \} \cdot d  \big) + \big(   a \cdot \{ f(b), c, d \} - \{ \! \! \{ a, f(b), c \} \! \! \} \cdot d  \big) + \big(   a \cdot \{ b, f(c), d \} - \{ \! \! \{ a, b, f(c) \} \! \! \} \cdot d  \big) \\
    & \qquad + \big(   a \cdot \{ b, c, f(d) \} - \{ \! \! \{ a, b, c \} \! \! \} \cdot f(d)  \big) - f \big(   a \cdot \{ b, c, d \} - \{ \! \! \{ a, b, c \} \! \! \} \cdot d  \big) \stackrel{(\text{\ref{ay6}})}{=} 0.
\end{align*}
Thus, we have verified the first 6 identities of a $(2,3)$-cocycle. The remaining identities can be proved similarly.
\end{proof}


A $(2,3)$-cocycle $(\mu, F, G)$ is said to be a {\bf $(2,3)$-coboundary} if there exists a linear map $f: A \rightarrow M$ such that $(\mu, F, G) = (\mu_f, F_f, G_f)$. The space of all $(2,3)$-coboundaries is denoted by the notation $\mathcal{B}^{(2,3)} (A, M)$. Then the quotient space
\begin{align*}
    \mathcal{H}^{(2,3)}(A, M) := \frac{ \mathcal{Z}^{(2,3)} (A, M)  }{ \mathcal{B}^{(2,3)} (A, M) }
\end{align*}
is said to be the {\bf $(2,3)$-th cohomology group} of the associative-Yamaguti algebra $A$ with coefficients in the representation $M.$

\section{Deformations and abelian extensions of associative-Yamaguti algebras}\label{sec5}

In this section, we study formal deformations and abelian extensions of an associative-Yamaguti algebra in terms of its $(2,3)$-th cohomology group.

\subsection{Formal deformations} Let $(A, ~ \! \cdot ~ \!, \{ ~, ~ , ~ \}, \{ \! \! \{ ~, ~ , ~ \} \! \! \})$ be an associative-Yamaguti algebra. Consider the space $A[[t]]$ of all formal power series in $t$ with coefficients from $A$. Then $A[[t]]$ is obviously a ${\bf k} [[t]]$-module. A {\bf formal $1$-parameter deformation} or simply a {\em formal deformation} of $(A, ~ \! \cdot ~ \!, \{ ~, ~ , ~ \}, \{ \! \! \{ ~, ~ , ~ \} \! \! \})$ consists of ${\bf k} [[t]]$-linear operations 
\begin{align*}
   & \mu_t : A[[t]] \otimes_{ {\bf k} [[t]]} A[[t]] \rightarrow A[[t]], \quad \mu_t (a, b ) = a \cdot b + t \mu_1 (a, b) + t^2 \mu_2 (a, b) + \cdots, \\
   & F_t : A[[t]] \otimes_{ {\bf k} [[t]]} A[[t]] \otimes_{ {\bf k} [[t]]} A[[t]]   \rightarrow A[[t]], \quad F_t (a, b, c ) = \{ a, b, c \} + t F_1 (a, b, c) + t^2 F_2 (a, b, c) + \cdots, \\
   & G_t : A[[t]] \otimes_{ {\bf k} [[t]]} A[[t]] \otimes_{ {\bf k} [[t]]} A[[t]]   \rightarrow A[[t]], \quad G_t (a, b, c ) = \{ \! \! \{ a , b, c \} \! \! \} + t G_1 (a, b, c) + t^2 G_2 (a, b, c) + \cdots ,
\end{align*}
(where $\mu_i : A^{\otimes 2} \rightarrow A$, $F_i : A^{\otimes 3} \rightarrow A$ and $G_i : A^{\otimes 3} \rightarrow A$ are linear maps for all $i$) that makes the tuple $(A[[t]], \mu_t, F_t, G_t)$ into an associative-Yamaguti algebra over the ring ${\bf k} [[t]]$.

\medskip

Let $(A[[t]], \mu_t, F_t, G_t)$ and $(A[[t]], \mu'_t, F'_t, G'_t)$ be two formal deformations of a given associative-Yamaguti algebra $(A, ~ \! \cdot ~ \! , \{ ~, ~, ~ \}, \{ \! \! \{ ~, ~, ~ \} \! \! \})$. They are said to be {\bf equivalent} if there exists a ${\bf k}[[t]]$-linear map 
\begin{align}\label{phi-t}
    \varphi_t : A[[t]] \rightarrow A[[t]] ~\text{ of the form }~ \varphi_t (a) = a + t \varphi_1 (a) + t^2 \varphi_2 (a) + \cdots 
\end{align}
(where $\varphi_i : A \rightarrow A$ are linear maps for all $i$)  that is a homomorphism of associative-Yamaguti algebras from $(A[[t]], \mu_t, F_t, G_t)$ to $(A[[t]], \mu'_t, F'_t, G'_t)$.

\begin{thm}\label{thm-inf-co}
    Let $(A, ~ \! \cdot ~ \! , \{ ~, ~ , ~ \}, \{ \! \! \{ ~, ~, ~ \} \! \! \})$ be an associative-Yamaguti algebra and $(A[[t]], \mu_t, F_t, G_t)$ a formal deformation of it. Then $(\mu_1, F_1, G_1)$ is a $(2,3)$-cocycle of the associative-Yamaguti algebra $(A, ~ \! \cdot ~ \! , \{ ~, ~ , ~ \}, \{ \! \! \{ ~, ~, ~ \} \! \! \})$ with coefficients in the adjoint representation. 
\end{thm}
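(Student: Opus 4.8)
The plan is to obtain the cocycle conditions as the first-order part of the deformation equations, and to package this conceptually through the twisted semidirect product of Proposition \ref{twisted semid}. By definition, a formal deformation $(A[[t]], \mu_t, F_t, G_t)$ is an associative-Yamaguti algebra over the ring $\mathbf{k}[[t]]$, so the eleven identities (\ref{ay1})--(\ref{ay11}) hold verbatim with $\cdot$, $\{~,~,~\}$, $\{\!\!\{~,~,~\}\!\!\}$ replaced by $\mu_t$, $F_t$, $G_t$. First I would pass to the quotient modulo $t^2$. Since all three operations are $\mathbf{k}[[t]]$-multilinear, the submodule $t^2 A[[t]]$ is carried into itself by each of them, so the quotient inherits an associative-Yamaguti algebra structure; forgetting the action of $t$, this is an associative-Yamaguti algebra over $\mathbf{k}$ on the space $A[[t]]/(t^2) \cong A \oplus tA$, which I identify with $A \oplus A$ via $a + tu \mapsto (a,u)$.

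The key step is to compute the induced operations on $A \oplus A$. Using $\mathbf{k}[[t]]$-multilinearity and the expansions $\mu_t(x,y) = x\cdot y + t\mu_1(x,y) + \cdots$, $F_t = \{~,~,~\} + tF_1 + \cdots$, $G_t = \{\!\!\{~,~,~\}\!\!\} + tG_1 + \cdots$, I would verify that
\begin{align*}
\mu_t(a+tu, b+tv) &\equiv a\cdot b + t\big(\mu_1(a,b) + a\cdot v + u\cdot b\big), \\
F_t(a+tu, b+tv, c+tw) &\equiv \{a,b,c\} + t\big(F_1(a,b,c) + \{u,b,c\} + \{a,v,c\} + \{a,b,w\}\big), \\
G_t(a+tu, b+tv, c+tw) &\equiv \{\!\!\{a,b,c\}\!\!\} + t\big(G_1(a,b,c) + \{\!\!\{u,b,c\}\!\!\} + \{\!\!\{a,v,c\}\!\!\} + \{\!\!\{a,b,w\}\!\!\}\big),
\end{align*}
where all congruences are modulo $t^2$. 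Under the identification above, these are exactly the operations $\cdot_{\ltimes_{\mu_1}}$, $\{~,~,~\}_{\ltimes_{F_1}}$, $\{\!\!\{~,~,~\}\!\!\}_{\ltimes_{G_1}}$ of (\ref{twisted-mu}) and the two following displays, taken with respect to the adjoint representation of $A$ on itself (where the mixed actions $a\cdot v$, $\{u,b,c\}$, and so on, are just the structure maps of $A$).

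Finally, I would invoke Proposition \ref{twisted semid} with $M = A$ the adjoint representation: since the mod-$t^2$ reduction is an associative-Yamaguti algebra and coincides with the $(\mu_1, F_1, G_1)$-twisted semidirect product, that proposition yields at once that $(\mu_1, F_1, G_1) \in \mathcal{Z}^{(2,3)}(A,A)$, i.e. it is a $(2,3)$-cocycle with coefficients in the adjoint representation. The only real work---and the step most likely to demand care---is the term-by-term matching in the displayed computation; it is entirely mechanical, following from multilinearity, but one must track the several first-order terms produced when the inner argument of a triple product is deformed. Equivalently, and as a check, one may bypass Proposition \ref{twisted semid} and read off the coefficient of $t$ directly in each of (\ref{ay1})--(\ref{ay11}); the constant terms reproduce the undeformed axioms, while the eleven linear terms are precisely the defining identities of a $(2,3)$-cocycle.
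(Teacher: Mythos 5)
Your proof is correct, and it reaches the conclusion by a genuinely different route from the paper's. The paper argues directly: it substitutes the power-series expansions of $\mu_t, F_t, G_t$ into each of the eleven identities (\ref{ay1})--(\ref{ay11}), collects the coefficient of $t^n$ for every $n \geq 0$ (with the conventions $\mu_0 = \cdot$, $F_0 = \{~,~,~\}$, $G_0 = \{\!\!\{~,~,~\}\!\!\}$), and observes that the resulting system at $n=1$ is precisely the list of defining identities of a $(2,3)$-cocycle. You instead reduce the deformed algebra modulo $t^2$: the $\mathbf{k}[[t]]$-multilinearity of the operations makes $t^2A[[t]]$ stable, so the quotient $A[[t]]/t^2A[[t]] \cong A \oplus A$ inherits an associative-Yamaguti algebra structure over $\mathbf{k}$, and your first-order computation correctly identifies its operations with the $(\mu_1,F_1,G_1)$-twisted semidirect product of Proposition \ref{twisted semid} taken over the adjoint representation; the ``if'' direction of that proposition then yields the cocycle property at once. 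The trade-off is this: the paper's expansion, while pure bookkeeping, produces the whole hierarchy of degree-$n$ equations, which is exactly what underlies the remark following the theorem (if $(\mu_i, F_i, G_i)$ vanishes for all $i<k$, the degree-$k$ equations show that the $k$-th infinitesimal is a cocycle) and, more generally, obstruction-theoretic considerations; your argument is more structural and spares you from writing out the eleven cocycle identities, but it delivers only the first-order statement (it does extend to the $k$-th infinitesimal, e.g.\ via the subalgebra $A \oplus t^kA$ of $A[[t]]/t^{k+1}A[[t]]$, though that needs an extra observation), and the computational content is not eliminated so much as relocated into Proposition \ref{twisted semid}, whose proof the paper declares straightforward without writing it down.
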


\begin{proof}
    Since $(A[[t]], \mu_t, F_t, G_t)$ is an associative-Yamaguti algebra over the ring ${\bf k} [[t]]$, the identities (\ref{ay1})-(\ref{ay11}) must be satisfied for this structure. Thus, for any $n \geq 0$ and $a, b, c , d, e \in A$, we must have
    \begin{align*}
       & \qquad \qquad \qquad \sum_{i +j = n}  \big(~ \mu_i ( \mu_j (a, b), c) - \mu_i (a, \mu_j (b, c)) ~\big) + F_n (a, b, c) - G_n (a, b, c) = 0,\\
        & \sum_{i +j = n} F_i (\mu_j (a, b), c, d) =  \sum_{i +j = n} F_i (a, \mu_j (b, c), d), \quad
          \sum_{i +j = n} F_i (a, b, \mu_j (c, d) ) =  \sum_{i +j = n} \mu_i (F_j (a, b, c), d), \\
         &  \sum_{i +j = n} G_i ( \mu_j (a, b), c, d) =  \sum_{i +j = n} \mu_i (a, G_j (b, c, d)), \quad
            \sum_{i +j = n} G_i (a, \mu_j (b, c), d) =  \sum_{i +j = n} G_i (a, b, \mu_j (c, d)),\\
           & \qquad \qquad \qquad \qquad \qquad \qquad \sum_{i +j = n} \mu_i (a, F_j (b, c, d)) =  \sum_{i +j = n} \mu_i (G_j (a, b, c), d), \\
           &  \qquad \qquad \qquad \sum_{i +j = n} F_i (F_j (a, b, c), d, e) =  \sum_{i +j = n} F_i (a, G_j (b, c, d), e) =  \sum_{i +j = n} F_i (a, b, F_j (c, d, e)), \\
           & \qquad \qquad \qquad \qquad \qquad \qquad \sum_{i +j = n} F_i (a, F_j (b, c, d), e) =  \sum_{i +j = n} F_i (G_j (a, b, c), d, e ), \\
           &  \qquad \qquad \qquad  \sum_{i +j = n} G_i (G_j (a, b, c), d, e) = \sum_{i +j = n}  G_i (a, F_j (b, c, d), e) = \sum_{i +j = n} G_i (a, b, G_j (c, d, e)), \\
           &  \sum_{i+j = n} G_i (a, G_j (b, c, d), e) = \sum_{i+j = n} G_i (a, b, F_j (c, d, e)), ~~  \sum_{i+j = n} F_i (a, b, G_j (c, d, e)) = \sum_{i+j= n} G_i (F_j (a, b, c), d, e),
    \end{align*}
    where we assumed that $\mu_0 = \cdot~$, $F_0 = \{ ~, ~, ~ \}$ and $G_0 = \{ \! \! \{ ~, ~, ~ \} \! \! \}$. For $n=1$, we simply get that $(\mu_1, F_1, G_1)$ is a $(2,3)$-cocycle.
\end{proof}

The $(2,3)$-cocycle $(\mu_1, F_1, G_1)$ is called the {\em infinitesimal} of the given formal deformation. More generally, if $(\mu_1, F_1, G_1)$, $(\mu_2, F_2, G_2), \ldots , (\mu_{k-1}, F_{k-1}, G_{k-1})$ are trivials and $(\mu_k, F_k, G_k)$ is the first nontrivial term then it is a $(2,3)$-cocycle, called the {\em $k$-th infinitesimal}.

\begin{prop}
    Let $(A, ~ \! \cdot ~ \! , \{ ~, ~, ~ \}, \{ \! \! \{ ~, ~, ~ \} \! \! \})$ be an associative-Yamaguti algebra. Then the infinitesimals corresponding to equivalent formal deformations of $A$ are differ by a $(2,3)$-coboundary. 
    Therefore, the infinitesimals corresponding to equivalent deformations give rise to the same element in $\mathcal{H}^{(2,3)}(A, A)$.
\end{prop}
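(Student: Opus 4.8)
The plan is to unwind the definition of equivalence and read off the needed identity from the coefficient of $t$. Suppose $(A[[t]], \mu_t, F_t, G_t)$ and $(A[[t]], \mu'_t, F'_t, G'_t)$ are two equivalent formal deformations, with infinitesimals $(\mu_1, F_1, G_1)$ and $(\mu'_1, F'_1, G'_1)$, respectively, and let $\varphi_t(a) = a + t \varphi_1(a) + t^2 \varphi_2(a) + \cdots$ be an equivalence between them, as in (\ref{phi-t}). Since $\varphi_t$ is a homomorphism of associative-Yamaguti algebras, it preserves all three operations, so that
\begin{align*}
    \varphi_t(\mu_t(a, b)) &= \mu'_t(\varphi_t(a), \varphi_t(b)), \\
    \varphi_t(F_t(a, b, c)) &= F'_t(\varphi_t(a), \varphi_t(b), \varphi_t(c)), \\
    \varphi_t(G_t(a, b, c)) &= G'_t(\varphi_t(a), \varphi_t(b), \varphi_t(c)),
\end{align*}
for all $a, b, c \in A$. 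My first step would be to expand each of these three identities as a power series in $t$ and compare the coefficients of $t^1$, keeping in mind that $\mu_0 = \mu'_0 = ~\!\cdot~\!$, $F_0 = F'_0 = \{~,~,~\}$ and $G_0 = G'_0 = \{\!\!\{~,~,~\}\!\!\}$, since both series deform the same algebra $A$.

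Carrying out the expansion of the first identity, the coefficient of $t$ on the left-hand side is $\mu_1(a, b) + \varphi_1(a \cdot b)$, while on the right-hand side it is $\varphi_1(a) \cdot b + a \cdot \varphi_1(b) + \mu'_1(a, b)$. Rearranging, I would obtain
\begin{align*}
    \mu_1(a, b) - \mu'_1(a, b) = \varphi_1(a) \cdot b + a \cdot \varphi_1(b) - \varphi_1(a \cdot b) = \mu_{\varphi_1}(a, b),
\end{align*}
which is exactly the first component of the coboundary associated with the linear map $f = \varphi_1$ as defined in (\ref{cob-cocycle}). Performing the same comparison for the second identity, the coefficient of $t$ on the two sides gives $F_1(a, b, c) + \varphi_1(\{a, b, c\})$ and $\{\varphi_1(a), b, c\} + \{a, \varphi_1(b), c\} + \{a, b, \varphi_1(c)\} + F'_1(a, b, c)$, whence $F_1 - F'_1 = F_{\varphi_1}$; an identical computation with $G$ in place of $F$ yields $G_1 - G'_1 = G_{\varphi_1}$.

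Putting the three together, the difference of the infinitesimals is
\begin{align*}
    (\mu_1 - \mu'_1, ~\! F_1 - F'_1, ~\! G_1 - G'_1) = (\mu_{\varphi_1}, F_{\varphi_1}, G_{\varphi_1}),
\end{align*}
which is the $(2,3)$-coboundary determined by $\varphi_1$, hence an element of $\mathcal{B}^{(2,3)}(A, A)$. Therefore the two infinitesimals differ by a coboundary and represent the same class in $\mathcal{H}^{(2,3)}(A, A)$. I expect no genuine obstacle in this argument: it is the standard first-order deformation-theoretic bookkeeping, and the only point demanding care is the correct expansion of the composites $\varphi_t \circ \mu_t$, $\varphi_t \circ F_t$ and $\varphi_t \circ G_t$ together with the matching of the zeroth-order terms, which rests on both deformations having the same underlying associative-Yamaguti algebra $A$.
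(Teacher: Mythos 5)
Your proposal is correct and follows essentially the same route as the paper's proof: expand the homomorphism conditions $\varphi_t \circ \mu_t = \mu'_t \circ (\varphi_t \otimes \varphi_t)$ (and likewise for $F_t$, $G_t$) in powers of $t$, compare first-order coefficients, and conclude $(\mu_1, F_1, G_1) - (\mu_1', F_1', G_1') = (\mu_{\varphi_1}, F_{\varphi_1}, G_{\varphi_1})$. The only cosmetic difference is that the paper records the coefficient identities for general $n$ before specializing to $n=1$, while you extract the $t^1$ terms directly.
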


\begin{proof}
    Let $(A[[t]], \mu_t, F_t, G_t)$ and $(A[[t]], \mu'_t, F'_t, G'_t)$ be two equivalent formal deformations, and suppose $\varphi_t : A[[t]] \rightarrow A[[t]]$ defined by (\ref{phi-t}) is a homomorphism between them. Then for any $n \geq 0$ and $a, b, c \in A$,
    \begin{align*}
        \sum_{i+j = n} \varphi_i (\mu_j (a, b)) =& \sum_{i+j+k=n} \mu_i' (\varphi_j (a), \varphi_k (b)), \quad \sum_{i+j = n} \varphi_i (F_j (a, b, c)) = \sum_{i+j+k+l = n} F_i' (\varphi_j (a), \varphi_k (b), \varphi_l (c)), \\
        & \sum_{i+j = n} \varphi_i (G_j (a, b, c)) = \sum_{i+j+k+l = n} G_i' (\varphi_j (a), \varphi_k (b), \varphi_l (c)),
    \end{align*}
    where we assumed that $\varphi_0 = \mathrm{Id}_A$. For $n =1$, these three identities can be equivalently written as 
    \begin{align*}
        (\mu_1, F_1, G_1) - (\mu_1' , F_1' , G_1') = (\mu_{\varphi_1}, F_{\varphi_1}, G_{\varphi_1}).
    \end{align*}
    See the notations from (\ref{cob-cocycle}).
    This proves the desired result.
\end{proof}

\subsection{Abelian extensions} Let $(A, ~ \! \cdot ~ \! , \{ ~, ~ , ~ \}, \{ \! \! \{ ~, ~, ~ \} \! \! \})$ be an associative-Yamaguti algebra and $M$ be any vector space (not necessarily having any additional structure). We regard $M$ as an associative-Yamaguti algebra $(M, 0, 0, 0)$ with the trivial structure operations. An {\bf abelian extension} of $(A, ~ \! \cdot ~ \! , \{ ~, ~ , ~ \}, \{ \! \! \{ ~, ~, ~ \} \! \! \})$ by the vector space $M$ is a new associative-Yamaguti algebra $(E, ~\! \cdot_E ~ \!, \{ ~, ~, ~ \}_E, \{ \! \! \{ ~, ~, ~ \} \! \! \}_E)$ with a short exact sequence
\begin{align}\label{abel-ext-diag}
    \xymatrix{
    0 \ar[r] & M \ar[r]^i & E \ar[r]^p & A \ar[r] & 0
    }
\end{align}
of associative-Yamaguti algebras. We denote an abelian extension as above by $(E, ~\! \cdot_E ~ \!, \{ ~, ~, ~ \}_E, \{ \! \! \{ ~, ~, ~ \} \! \! \}_E) $ or simply by $E$ when the maps $i$ and $p$ are clear from the context. A {\em section} of the map $p$ is a linear map $s : A \rightarrow E$ that satisfies $p  s = \mathrm{Id}_A$. For any section $s$, we define linear maps $\cdot : \mathcal{A}^{1, 1} \rightarrow M$, $\{ ~, ~ , ~ \} : \mathcal{A}^{2, 1} \rightarrow M$ and $\{ \! \! \{ ~, ~, ~ \} \! \! \} : \mathcal{A}^{2, 1} \rightarrow M$ by
\begin{align*}
    & \qquad \qquad \qquad a \cdot u := s(a) \cdot_E u, \quad  u \cdot a := u \cdot_E s(a),\\
     &\{ a, b, u \} := \{ s(a), s(b), u \}_E, ~~~ \{ a, u, b \} :=  \{ s(a), u , s (b) \}_E, ~~ \{ u, a, b \} := \{ u, s(a), s(b) \}_E, \\
    &  \{ \! \! \{ a, b, u \} \! \! \} := \{ \! \! \{ s(a), s(b), u \}  \! \! \}_E, ~~ \{ \! \! \{ a, u, b \}  \! \! \} := \{ \! \! \{ s(a), u , s (b) \}  \! \! \}_E, ~~ \{ \! \! \{ u, a, b \}  \! \! \} := \{ \! \! \{ u, s(a), s(b) \}  \! \! \}_E,
\end{align*}
for $a, b \in A$ and $u \in M$. Then it is easy to see that the above maps make $M$ into a representation of the given associative-Yamaguti algebra $(A, ~ \! \cdot ~ \! , \{ ~, ~ , ~ \}, \{ \! \! \{ ~, ~, ~ \} \! \! \})$, called the {\em induced representation}. Further, this representation doesn't depend on the choice of the section $s$. 

Let $(E, ~\! \cdot_E ~ \!, \{ ~, ~, ~ \}_E, \{ \! \! \{ ~, ~, ~ \} \! \! \}_E) $ and $(E', ~\! \cdot_{E'} ~ \!, \{ ~, ~, ~ \}_{E'}, \{ \! \! \{ ~, ~, ~ \} \! \! \}_{E'}) $ be two abelian extensions of the associative-Yamaguti algebra $(A, ~ \! \cdot ~ \! , \{ ~, ~ , ~ \}, \{ \! \! \{ ~, ~, ~ \} \! \! \})$ by the vector space $M$. They are said to be {\bf isomorphic} if there exists an isomorphism $\Phi : E \rightarrow E'$ of associative-Yamaguti algebras making the following diagram commutative:
\begin{align}\label{iso-abel}
    \xymatrix{
    0 \ar[r] & M \ar@{=}[d] \ar[r]^i & E \ar[r]^p \ar[d]^\Phi & A \ar[r] \ar@{=}[d] & 0 \\
    0 \ar[r] & M \ar[r]_{i'} & E' \ar[r]_{p'} & A \ar[r] & 0.
    }
\end{align}
Note that the induced representations on $M$ corresponding to isomorphic abelian extensions are the same.

Next, let $(M, ~ \! \cdot ~ \! , \{ ~, ~ , ~ \}, \{ \! \! \{ ~, ~, ~ \} \! \! \} )$ be any given representation of the associative-Yamaguti algebra $(A, ~ \! \cdot ~ \! , \{ ~, ~ , ~ \}, \{ \! \! \{ ~, ~, ~ \} \! \! \})$. We denote by $\mathrm{Ext} (A, M)$ the set of all isomorphism classes of abelian extensions of the associative-Yamaguti algebra $(A, ~ \! \cdot ~ \! , \{ ~, ~ , ~ \}, \{ \! \! \{ ~, ~, ~ \} \! \! \})$ by the vector space $M$ for which the induced representation on $M$ coincides with the prescribed one. Then we have the following result.

\begin{thm}\label{thm-abelian}
    Let $(A, ~ \! \cdot ~ \! , \{ ~, ~ , ~ \}, \{ \! \! \{ ~, ~, ~ \} \! \! \})$ be an associative-Yamaguti algebra and $(M, ~ \! \cdot ~ \! , \{ ~, ~ , ~ \}, \{ \! \! \{ ~, ~, ~ \} \! \! \} )$ be a given representation of it. Then there is a bijection
        $\mathrm{Ext} (A, M) \cong  \mathcal{H}^{(2, 3)} (A, M).$
\end{thm}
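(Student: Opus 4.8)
The plan is to construct two mutually inverse maps between the set $\mathrm{Ext}(A,M)$ and the group $\mathcal{H}^{(2,3)}(A,M)$, following the standard template for the cohomological classification of abelian extensions, with Proposition \ref{twisted semid} playing the central role. Given an abelian extension $E$ as in (\ref{abel-ext-diag}), I would first fix a ${\bf k}$-linear section $s : A \to E$ of $p$. Since $p$ is a homomorphism of associative-Yamaguti algebras, for all $a,b,c \in A$ the elements
$$s(a)\cdot_E s(b) - s(a\cdot b), \quad \{s(a),s(b),s(c)\}_E - s(\{a,b,c\}), \quad \{\!\{s(a),s(b),s(c)\}\!\}_E - s(\{\!\{a,b,c\}\!\})$$
lie in $\ker p = i(M)$, and hence define linear maps $\mu : A^{\otimes 2} \to M$ and $F, G : A^{\otimes 3} \to M$. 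Transporting the structure of $E$ to $A \oplus M$ along the linear isomorphism $(a,u) \mapsto s(a) + i(u)$, the resulting operations are precisely $\cdot_{\ltimes_\mu}$, $\{~,~,~\}_{\ltimes_F}$ and $\{\!\{~,~,~\}\!\}_{\ltimes_G}$ of Proposition \ref{twisted semid}, where the underlying representation on $M$ is the induced one. By that proposition, $(\mu, F, G)$ is therefore a $(2,3)$-cocycle.

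Next I would verify that the assignment $[E] \mapsto [(\mu,F,G)]$ is well defined. A second section $s'$ differs from $s$ by a linear map $f := s' - s : A \to M$ (since $p(s'-s)=0$), and a direct computation shows that the cocycle associated to $s'$ equals the one associated to $s$ plus exactly $(\mu_f, F_f, G_f)$ as defined in (\ref{cob-cocycle}); thus the class in $\mathcal{H}^{(2,3)}(A,M)$ is independent of the section. Likewise, if $\Phi : E \to E'$ is an isomorphism of extensions as in (\ref{iso-abel}) and $s$ is a section of $p$, then $\Phi \circ s$ is a section of $p'$, and because $\Phi$ is an algebra homomorphism restricting to the identity on $M$, the cocycle computed from $E'$ using $\Phi \circ s$ coincides with the one computed from $E$ using $s$. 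Combined with section-independence, this shows isomorphic extensions determine the same cohomology class, so we obtain a well-defined map $\mathrm{Ext}(A,M) \to \mathcal{H}^{(2,3)}(A,M)$.

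For the inverse, given a $(2,3)$-cocycle $(\mu,F,G)$, Proposition \ref{twisted semid} endows $A \oplus M$ with an associative-Yamaguti algebra structure, and the obvious maps $i : u \mapsto (0,u)$ and $p : (a,u) \mapsto a$ exhibit it as an abelian extension of $A$ by $M$ whose induced representation is the prescribed one. I would then check that cohomologous cocycles yield isomorphic extensions: if $(\mu',F',G') = (\mu,F,G) + (\mu_f, F_f, G_f)$ for some linear $f : A \to M$, then $\Phi : (a,u) \mapsto (a, u + f(a))$ is an isomorphism of the two extensions fitting into (\ref{iso-abel}). This produces a map $\mathcal{H}^{(2,3)}(A,M) \to \mathrm{Ext}(A,M)$. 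Finally, taking the canonical section $s(a) = (a,0)$ of the standard extension recovers the original cocycle, and conversely the isomorphism $(a,u) \mapsto s(a)+i(u)$ identifies any extension with the one built from its associated cocycle; hence the two maps are mutually inverse, giving the bijection.

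The main obstacle is not conceptual but bookkeeping: the two computations that carry the real content are the section-change identity (that reparametrizing by $f$ adds exactly the coboundary $(\mu_f, F_f, G_f)$) and the verification that $\Phi(a,u)=(a,u+f(a))$ is a genuine homomorphism for the twisted operations. Both unwind directly from the eleven cocycle identities and the coboundary formulas (\ref{cob-cocycle}), but they must be tracked carefully across all of (\ref{ay1})--(\ref{ay11}) and their one-slot-in-$M$ analogues, so the effort lies in organizing this verification rather than in any single hard step.
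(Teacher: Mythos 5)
Your proposal is correct and follows essentially the same route as the paper's proof: extract the cocycle $(\mu,F,G)$ from a section, check well-definedness under change of section (difference is the coboundary $(\mu_f,F_f,G_f)$) and under isomorphism of extensions (via the section $\Phi\circ s$), build the inverse map from a cocycle using the twisted semidirect product of Proposition \ref{twisted semid}, and use $\Phi(a,u)=(a,u+f(a))$ to show cohomologous cocycles give isomorphic extensions. The only (minor, and welcome) difference is that you establish that $(\mu,F,G)$ is a $(2,3)$-cocycle by transporting the structure of $E$ to $A\oplus M$ along $(a,u)\mapsto s(a)+i(u)$ and invoking the characterization in Proposition \ref{twisted semid}, whereas the paper verifies the cocycle identities directly; your variant reuses in the forward direction the same proposition the paper applies only in the converse direction.
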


\begin{proof}
    Let $(E, ~\! \cdot_E ~ \!, \{ ~, ~, ~ \}_E, \{ \! \! \{ ~, ~, ~ \} \! \! \}_E)$ be an abelian extension as of (\ref{abel-ext-diag}). For any section $s : A \rightarrow E$, we define linear maps $\mu : A^{\otimes 2} \rightarrow M$ and $F, G : A^{\otimes 3} \rightarrow M$ by
    \begin{align*}
        \mu (a, b) :=~& s (a) \cdot_E s(b) -  s (a \cdot b),\\
        F (a, b, c ) :=~& \{ s(a), s(b), s(c) \}_E - s ( \{ a, b, c \}),\\
        G (a, b, c ) :=~& \{ \! \! \{ s(a), s(b), s(c) \} \! \! \}_E - s ( \{ \! \! \{  a, b, c \} \! \! \}),
    \end{align*}
    for $a, b, c \in A$. Then it is not hard to see that the triple $(\mu, F, G)$ is a $(2, 3)$-cocycle. See the reference \cite{zhang-li} for a similar result in the context of Lie-Yamaguti algebras. Suppose $\widetilde{s} : A \rightarrow E$ is any other section with the corresponding $(2, 3)$-cocycle $(\widetilde{\mu}, \widetilde{F}, \widetilde{G})$. We set a linear map $\varphi : A \rightarrow M$ defined by $\varphi = s - \widetilde{s}$. Then one may show that
    \begin{align*}
        (\mu, F, G) - (\widetilde{\mu}, \widetilde{F}, \widetilde{G}) = (\mu_\varphi, F_\varphi, G_\varphi) \text{~ is a } (2,3)\text{-coboundary.}
    \end{align*}
    Hence, the $(2, 3)$-cocycles $(\mu, F, G)$ and $(\widetilde{\mu}, \widetilde{F}, \widetilde{G})$ correspond to the same element in $\mathcal{H}^{2,3} (A, M)$.

    Next, let $(E, ~\! \cdot_E ~ \!, \{ ~, ~, ~ \}_E, \{ \! \! \{ ~, ~, ~ \} \! \! \}_E)$ and $(E', ~\! \cdot_{E'} ~ \!, \{ ~, ~, ~ \}_{E'}, \{ \! \! \{ ~, ~, ~ \} \! \! \}_{E'})$ be two isomorphic abelian extensions as of (\ref{iso-abel}). For any section $s : A \rightarrow E$ (of the map $p$), the map $s'= \Phi \circ s : A \rightarrow E'$ is a section of the map $p'$. Therefore, if $(\mu', F', G')$ is the $(2, 3)$-cocycle corresponding to the abelian extension $(E', ~\! \cdot_{E'} ~ \!, \{ ~, ~, ~ \}_{E'}, \{ \! \! \{ ~, ~, ~ \} \! \! \}_{E'})$ with the section $s'= \Phi \circ s$, then we have
    \begin{align*}
        \mu'(a, b) = (\Phi \circ s) (a) \cdot_{E'} (\Phi \circ s) (b) - (\Phi \circ s) (a \cdot b) = \Phi ( \underbrace{s (a) \cdot_E s(b) -  s (a \cdot b)}_{ =~ \mu (a, b) ~\in M}   ) = \mu (a, b). 
        \end{align*}
    Similarly, $F'(a, b, c ) = F(a, b, c) $ and $G'(a, b, c ) = G(a, b, c ),$ for all $a, b, c \in A$. Therefore, we get that $(\mu, F, G) = (\mu', F', G')$. This ensures the existence of a map $\Psi: \mathrm{Ext} (A, M) \rightarrow \mathcal{H}^{(2,3)} (A, M).$

    \medskip

    On the other hand, let $(\mu, F, G) \in \mathcal{Z}^{(2, 3)} (A, M)$ be any $(2, 3)$-cocycle of the associative-Yamaguti algebra $(A, ~\! \cdot ~\! , \{ ~, ~, ~\} , \{ \! \! \{ ~, ~, ~\} \! \! \})$ with coefficients in the given representation. Then by Proposition \ref{twisted semid}, we obtain the associative-Yamaguti algebra $ A \oplus_{(\mu, F, G)} M = (A \oplus M, ~ \! \cdot_{\ltimes_\mu} ~ \! , \{ ~, ~, ~ \}_{\ltimes_F}, \{ \! \! \{ ~, ~, ~ \} \! \! \}_{\ltimes_G})$. It is easy to see that $0 \rightarrow M \xrightarrow{i}  A \oplus_{(\mu, F, G)} M \xrightarrow{p} A \rightarrow 0$ is an abelian extension, where $i$ is the obvious inclusion map and $p$ is the projection map. Further, the induced representation on $M$ coincides with the prescribed one. Next, suppose $(\mu, F, G)$ and $(\mu', F', G')$ are two $(2, 3)$-cocycles differ by a $(2, 3)$-coboundary, i.e.
    \begin{align*}
        (\mu, F, G) - (\mu', F', G') = (\mu_f, F_f, G_f),
    \end{align*}
    for some linear map $f: A \rightarrow M$. We define a linear isomorphism $\Phi : A \oplus M \rightarrow A \oplus M$ by  
    \begin{align}\label{phi-map}
        \Phi (a, u) = (a , u + f (a)), \text{ for } (a, u) \in A \oplus M.
    \end{align}
    Then we have
    \begin{align*}
         \Phi \big(  (a, u) \cdot_{\ltimes_\mu} (b, v) \big) \stackrel{(\ref{twisted-mu}) (\ref{phi-map})}{=}~& \big(  a \cdot b ~ \!, ~ \! a \cdot v + u \cdot b + \mu (a, b) + f (a \cdot b)  \big) \\
         =~& \big(  a \cdot b  ~ \!, ~ \! a \cdot v + a \cdot f(b) + u \cdot b + f(a) \cdot b + \mu' (a, b) \big) \quad (\because ~ \mu- \mu' = \mu_f)\\         
         =~& (a, u +f (a)) \cdot_{\ltimes_{\mu'}} (b , v + f(b)) = \Phi (a, u) \cdot_{\ltimes_{\mu'}} \Phi (b, v),
    \end{align*}
    for any $(a, u), (b, v) \in A \oplus M$. In the same way, one may show that
    \begin{align*}
        \Phi \big( \{ (a, u), (b, v), (c, w ) \}_{\ltimes_F} \big) =~& \{ \Phi (a, u), \Phi (b, v), \Phi (c, w ) \}_{\ltimes_{F'}}, \\
        \Phi \big( \{ \! \! \{  (a, u), (b, v), (c, w ) \} \! \!  \}_{\ltimes_G} \big) =~& \{ \! \! \{  \Phi (a, u), \Phi (b, v), \Phi (c, w ) \} \! \! \}_{\ltimes_{G'}}.
    \end{align*}
    Hence $\Phi : A \oplus_{(\mu, F, G)} M \rightarrow A \oplus_{(\mu', F', G')} M$ is an isomorphism of abelian extensions. As a result, we obtain a map $\Upsilon : \mathcal{H}^{(2,3)} (A, M) \rightarrow \mathrm{Ext} (A, M)$. Finally, it is easy to see that the maps $\Psi$ and $\Upsilon$ are inverses to each other. This completes the proof.
\end{proof}

\section{Yamaguti multiplications, dendriform-Yamaguti algebras and relative Rota-Baxter operators}\label{sec6}

In this section, we introduce Yamaguti multiplications on any nonsymmetric operad as a generalization of associative-Yamaguti algebras. Subsequently, we define and study dendriform-Yamaguti algebras. We end this section by considering relative Rota-Baxter operators on associative-Yamaguti algebras and their relations with dendriform-Yamaguti algebras.

\begin{defn}
    A {\bf nonsymmetric operad} (or a {\bf non-$\Sigma$ operad}) is a triple $\mathcal{P} = (\{ \mathcal{P} (n) \}_{n \geq 1}, \circ, {\bf 1})$ consisting of a collection $\{ \mathcal{P} (n) \}_{n \geq 1}$ of vector spaces equipped with linear maps (called {\em partial compositions})
    \begin{align*}
        \circ_i : \mathcal{P} (m) \otimes \mathcal{P}(n) \rightarrow \mathcal{P} (m+n-1) \quad (\text{for } 1 \leq i \leq m)
    \end{align*}
    such that for any $f \in \mathcal{P}(m)$, $g \in \mathcal{P}(n)$ and $h \in \mathcal{P} (k),$
    \begin{align*}
        f \circ_i (g \circ_j h) =~& (f \circ_i g) \circ_{i+j-1} h, ~~  \text{ for } 1 \leq i \leq m, 1 \leq j \leq n, \\
        (f \circ_i  g) \circ_{j+n-1} h =~& (f \circ_j h) \circ_i  g, ~~ \text{ for } 1 \leq i < j \leq m,
    \end{align*}
    and there is a distinguished element ${\bf 1} \in \mathcal{P}(1)$ that satisfies $f \circ_i {\bf 1} = {\bf 1} \circ_1 f$, for all $f \in \mathcal{P} (m)$ and $1 \leq i \leq m.$
\end{defn}

In this paper, we will mostly concentrate on the following two nonsymmetric operads.

\begin{exam}
    Let $A$ be any vector space. Then the endomorphism operad $\mathrm{End}_A = ( \{ \mathrm{End}_A (n) \}_{n \geq 1}, \circ, \mathrm{Id}_A )$ associated to $A$ is given by $\mathrm{End}_A (n) = \mathrm{Hom} (A^{\otimes n} , A)$ for $n \geq 1$, with the partial compositions
    \begin{align*}
         (f \circ_i g) (a_1, \ldots, a_{m+n-1} ) = f (a_1, \ldots, a_{i-1} ,  g (a_i, \ldots, a_{i+n-1}), a_{i+n}, \ldots, a_{m+n-1}),
    \end{align*}
    for $f \in \mathrm{End}_A (m)$, $g \in \mathrm{End}_A (n)$, $1 \leq i \leq m$ and $a_1, \ldots, a_{m+n-1} \in A$. Here $\mathrm{Id}_A \in \mathrm{End}_A(1) = \mathrm{Hom}(A,A)$ is the identity map.
\end{exam}

\begin{exam} Let $C_n$ be the set of first $n$ natural numbers. For convenience and notational purpose, we write $C_n = \{   [1], [2], \ldots , [n] \}$. Given any vector space $A$, we set $\mathrm{Dend}_A (n) = \mathrm{Hom} ( {\bf k} [C_n] \otimes A^{\otimes n}, A)$ for $n \geq 1$. For any $f \in \mathrm{Dend}_A (m)$, $g \in \mathrm{Dend}_A (n)$ and $1 \leq i \leq m$, one define an element $f \circ_i g \in \mathrm{Dend}_A (m+n-1)$ by 
\begin{align*}
    &(f \circ_i g) ([r]; a_1, \ldots, a_{m+n-1}) \\
   &~= \begin{cases}
        f ({\scriptstyle [r]} ;  a_1, \ldots, a_{i-1},  g ( {\scriptstyle [1] + \cdots + [n]} ; a_i, \ldots, a_{i+n-1}) , a_{i+n} , \ldots, a_{m+n-1}) & \text{ if } 1 \leq r \leq i-1, \\
        f ({\scriptstyle [i]} ;  a_1, \ldots, a_{i-1},  g ( {\scriptstyle [r-i+1]} ; a_i, \ldots, a_{i+n-1}) , a_{i+n} , \ldots, a_{m+n-1}) & \text{ if } i \leq r \leq i+n-1, \\
        f ({\scriptstyle [r-n+1]} ;  a_1, \ldots, a_{i-1},  g ( {\scriptstyle [1] + \cdots + [n]} ; a_i, \ldots, a_{i+n-1}) , a_{i+n} , \ldots, a_{m+n-1}) & \text{ if } i+n \leq r \leq m+n-1,
    \end{cases}
\end{align*}
where $[r] \in C_{m+n-1}$ and $a_1, \ldots, a_{m+n-1} \in A$. We also consider the element $\mathrm{Id}_A \in \mathrm{Dend}_A (1)$ which is being defined by $\mathrm{Id}_A ( [1]; a) = a$, for all $a \in A$. Then $\mathrm{Dend}_A = (     \{ \mathrm{Dend}_A (n) \}_{n \geq 1}, \circ, \mathrm{Id}_A)$ is a nonsymmetric operad \cite{das-dend}.
\end{exam}

Let $\mathcal{P} = ( \{ \mathcal{P} (n) \}_{n \geq 1}, \circ, {\bf 1})$ be any nonsymmetric operad. Then an element $\pi \in \mathcal{P}(2)$ is said to be a {\bf multiplication} \cite{gers-voro} on $\mathcal{P}$ if $\pi \circ_1 \pi = \pi \circ_2 \pi$. It follows that a multiplication on the endomorphism operad $\mathrm{End}_A$ is a linear map $\pi : A \otimes A \rightarrow A$, $a \otimes b \mapsto a \cdot b$ satisfying $(a \cdot b) \cdot c  = a \cdot (b \cdot c)$, for all $a, b, c \in A$. Hence, multiplications on $\mathrm{End}_A$ correspond to associative algebra structures on $A$. On the other hand, it is easy to see that a multiplication $\pi$ on the nonsymmetric operad $\mathrm{Dend}_A$ is equivalent to having two linear maps $\prec, \succ : A \otimes A \rightarrow A$ satisfying
    \begin{align}
        (a \prec b) \prec c =~& a \prec (b \prec c + b \succ c), \label{dend1} \\
        (a \succ b) \prec c =~& a \succ (b \prec c),\\
        (a \prec b + a \succ b) \succ c=~& a \succ (b \succ c), \label{dend3}
    \end{align}
    for all $a, b, c \in A$. Such correspondence is given by
    \begin{align*}
        \pi ~ ~  \Longleftrightarrow ~ ~ a \prec b = \pi ([1] ; a, b) ~ \text{ and } ~ a \succ b = \pi ([2]; a, b).
    \end{align*}
    A triple $(A, \prec, \succ)$ of a vector space $A$ with two linear operations $\prec, \succ : A \otimes A \rightarrow A$ satisfying the identities (\ref{dend1})-(\ref{dend3}) is called a {\bf dendriform algebra} \cite{loday}. Thus, multiplications on $\mathrm{Dend}_A$ correspond to dendriform algebra structures on $A$.

\begin{defn}\label{defn-ym}
    Let $\mathcal{P}$ be a nonsymmetric operad. A {\bf Yamaguti multiplication} on $\mathcal{P}$ is a triple $(\pi, \theta, \vartheta)$ consisting of elements $\pi \in \mathcal{P}(2)$ and $\theta, \vartheta \in \mathcal{P}(3)$ satisfying the following conditions:
\begin{align*}
     \pi \circ_1 \pi - \pi \circ_2 \pi + \theta - \vartheta = 0, \qquad 
     \begin{cases}
         \theta \circ_1 \pi = \theta \circ_2 \pi, \\
         \theta \circ_3 \pi = \pi \circ_1 \theta, \\
         \vartheta \circ_1 \pi = \pi \circ_2 \vartheta,\\
         \vartheta \circ_2 \pi = \vartheta \circ_3 \pi,\\
         \pi \circ_2 \theta = \pi \circ_1 \vartheta,
     \end{cases} \qquad 
     \begin{cases}
         \theta \circ_1 \theta = \theta \circ_2 \vartheta = \theta \circ_3 \theta,\\
         \theta \circ_2 \theta = \theta \circ_1 \vartheta,\\
         \vartheta \circ_1 \vartheta = \vartheta \circ_2 \theta = \vartheta \circ_3 \vartheta,\\
         \vartheta \circ_2 \vartheta = \vartheta \circ_3 \theta,\\
         \theta \circ_3 \vartheta = \vartheta \circ_1 \theta.
     \end{cases}
\end{align*}
\end{defn}

\begin{exam}
    Let $\pi$ be a multiplication on $\mathcal{P}$. Define elements $\theta, \vartheta \in \mathcal{P}(3)$ by $\theta = \vartheta = \pi \circ_1 \pi = \pi \circ_2 \pi$. Then $(\pi, \theta, \vartheta)$ is a Yamaguti multiplication on $\mathcal{P}$.
\end{exam}

\begin{thm}\label{thm-ym-assy}
    Let $A$ be a vector space. Then there is a one-one correspondence between associative-Yamaguti algebra structures on $A$ and Yamaguti multiplications on the endomorphism operad $\mathrm{End}_A$.
\end{thm}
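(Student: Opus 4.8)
The plan is to establish the bijection by translating each of the eleven defining identities of an associative-Yamaguti algebra, phrased on the space $A$, into the corresponding operadic identity on $\mathrm{End}_A$, and conversely. A Yamaguti multiplication on $\mathrm{End}_A$ is a triple $(\pi, \theta, \vartheta)$ with $\pi \in \mathrm{End}_A(2) = \mathrm{Hom}(A^{\otimes 2}, A)$ and $\theta, \vartheta \in \mathrm{End}_A(3) = \mathrm{Hom}(A^{\otimes 3}, A)$. First I would set up the dictionary: given such a triple, define $a \cdot b := \pi(a, b)$, $\{a, b, c\} := \theta(a, b, c)$ and $\{\!\!\{ a, b, c \}\!\!\} := \vartheta(a, b, c)$, and conversely. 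The whole proof then amounts to checking that, under this correspondence, the twelve operadic equations of Definition \ref{defn-ym} are exactly the eleven algebra identities \eqref{ay1}--\eqref{ay11}.

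The key computational input is that for the endomorphism operad, the partial composition $\circ_i$ is precisely substitution into the $i$-th slot. Thus for $\alpha \in \mathrm{End}_A(3)$ and $\beta \in \mathrm{End}_A(3)$, evaluating $(\alpha \circ_1 \beta)(a, b, c, d, e) = \alpha(\beta(a,b,c), d, e)$, $(\alpha \circ_2 \beta)(a, b, c, d, e) = \alpha(a, \beta(b,c,d), e)$ and $(\alpha \circ_3 \beta)(a, b, c, d, e) = \alpha(a, b, \beta(c,d,e))$; similarly $(\theta \circ_i \pi)$ and $(\pi \circ_i \theta)$ unfold into the ternary and binary operations evaluated on the appropriate slots. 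I would proceed group by group, matching the three displayed columns of Definition \ref{defn-ym} against the three groups of algebra axioms. The single scalar equation $\pi \circ_1 \pi - \pi \circ_2 \pi + \theta - \vartheta = 0$ evaluated on $(a, b, c)$ gives exactly \eqref{ay1}. The five equations in the middle column, evaluated on a generic four-fold tuple, reproduce \eqref{ay2}--\eqref{ay6}: for instance $\theta \circ_1 \pi = \theta \circ_2 \pi$ reads $\{a \cdot b, c, d\} = \{a, b \cdot c, d\}$, which is \eqref{ay2}, while $\theta \circ_3 \pi = \pi \circ_1 \theta$ reads $\{a, b, c \cdot d\} = \{a,b,c\} \cdot d$, which is \eqref{ay3}, and so on. The five equations in the right column, evaluated on a five-fold tuple, reproduce \eqref{ay7}--\eqref{ay11}.

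The one point requiring care is matching the correct operadic equation to each algebra axiom, since the axioms \eqref{ay7}, \eqref{ay9} each encode a double equality while \eqref{ay8}, \eqref{ay10}, \eqref{ay11} are single equalities; the bookkeeping must confirm that the right-hand column of Definition \ref{defn-ym} contains exactly the same chain of equalities. Concretely, $\theta \circ_1 \theta = \theta \circ_2 \vartheta = \theta \circ_3 \theta$ unfolds to $\{\{a,b,c\},d,e\} = \{a, \{\!\!\{ b,c,d \}\!\!\}, e\} = \{a, b, \{c,d,e\}\}$, which is precisely \eqref{ay7}; $\vartheta \circ_1 \vartheta = \vartheta \circ_2 \theta = \vartheta \circ_3 \vartheta$ gives \eqref{ay9}; and the remaining three single equations give \eqref{ay8}, \eqref{ay10}, \eqref{ay11} respectively. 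Since each operadic equation is an equality of elements in $\mathrm{Hom}(A^{\otimes n}, A)$, and two such elements coincide iff they agree on all tuples of arguments, every implication is an equivalence; the correspondence $(\pi, \theta, \vartheta) \leftrightarrow (\cdot, \{\,,\,,\,\}, \{\!\!\{\,,\,,\,\}\!\!\})$ is manifestly a bijection on the underlying data, and the verification above shows it restricts to a bijection between Yamaguti multiplications and associative-Yamaguti structures. \textbf{The main obstacle} is purely organizational rather than mathematical: correctly unraveling which slot each $\circ_i$ substitutes into and confirming the exact term-by-term correspondence, so I would present the dictionary explicitly and then assert that each equation unfolds as claimed, leaving the routine slot-substitution checks to the reader.
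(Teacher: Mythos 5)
Your proposal is correct and takes essentially the same approach as the paper: the paper's proof consists precisely of your dictionary $\pi(a,b) = a \cdot b$, $\theta(a,b,c) = \{a,b,c\}$, $\vartheta(a,b,c) = \{\!\!\{a,b,c\}\!\!\}$, declaring the identity-matching ``easy to see,'' and your line-by-line pairing (the scalar equation with \eqref{ay1}, the middle column with \eqref{ay2}--\eqref{ay6}, the right column with \eqref{ay7}--\eqref{ay11}) is exactly the omitted verification, carried out correctly. The only nit is your count of ``twelve'' operadic equations: Definition \ref{defn-ym} has eleven lines (two of them chains of two equalities, mirroring \eqref{ay7} and \eqref{ay9}), matching the eleven axioms one-to-one, which is what your own matching shows.
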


\begin{proof}
    Let $(A, ~ \! \cdot  ~ \!, \{ ~, ~ , ~ \}, \{ \! \! \{ ~, ~, ~ \} \! \! \})$ be an associative-Yamaguti algebra structure on the vector space $A$. We define elements $\pi \in \mathrm{End}_A (2)$ and $\theta, \vartheta \in \mathrm{End}_A (3)$ by 
    \begin{align}\label{aya-aym}
        \pi (a, b) = a \cdot b, \qquad \theta (a, b, c) = \{ a, b, c \}, \qquad \vartheta (a, b, c ) = \{ \! \! \{ a, b, c \} \! \! \},
    \end{align}
    for $a, b, c \in A$. Then it is easy to see that $(\pi, \theta, \vartheta)$ is a Yamaguti multiplication on $\mathrm{End}_A$. Using (\ref{aya-aym}), one may conversely define an associative-Yamaguti algebra structure on $A$ from a Yamaguti multiplication on $\mathrm{End}_A$.
\end{proof}

Motivated by the above result, one may think that a dendriform-Yamaguti algebra structure on a vector space $A$ should be a Yamaguti multiplication on the nonsymmetric operad $\mathrm{Dend}_A$. Explicit form is given in the following definition.

\begin{defn} \label{defn-dendy} A {\bf dendriform-Yamaguti algebra} is a tuple 
\begin{align}\label{dy-algebra}
    (D, \prec, \succ, \{ ~, ~, ~ \}_{[1]},\{ ~, ~, ~ \}_{[2]}, \{ ~, ~, ~ \}_{[3]}, \{ \! \! \{ ~, ~ , ~ \} \! \! \}_{[1]}, \{ \! \! \{ ~, ~ , ~ \} \! \! \}_{[2]}, \{ \! \! \{ ~, ~ , ~ \} \! \! \}_{[3]} )
\end{align}
of a vector space $D$ equipped with linear operations $\prec, \succ : D \otimes D \rightarrow D$ and 
\begin{align*}
     \{ ~, ~, ~ \}_{[1]} ~ , ~\{ ~, ~, ~ \}_{[2]} ~,~ \{ ~, ~, ~ \}_{[3]} ~,~ \{ \! \! \{ ~, ~ , ~ \} \! \! \}_{[1]}~,~ \{ \! \! \{ ~, ~ , ~ \} \! \! \}_{[2]} ~,~ \{ \! \! \{ ~, ~ , ~ \} \! \! \}_{[3]} : D \otimes D \otimes D \rightarrow D
\end{align*}
such that for all $a, b, c, d, e \in D$, the following list of identities are hold:
\begin{align*}
   &\mathrm{(DY1A)} \qquad (a \prec b) \prec c - a \prec (b \prec c + b \succ c) + \{ a, b, c \}_{[1]} - \{ \! \! \{ a, b, c \} \! \! \}_{[1]} = 0,\\
   &\mathrm{(DY1B)} \qquad (a \succ b) \prec c - a \succ (b \prec c) + \{ a, b, c \}_{[2]} - \{ \! \! \{ a, b, c \} \! \! \}_{[2]} = 0,\\
   &\mathrm{(DY1C)} \qquad (a \prec b + a \succ b) \succ c - a \succ (b \succ c) + \{ a, b, c \}_{[3]} - \{ \! \! \{ a, b, c \} \! \! \}_{[3]} = 0,
\end{align*}
\begin{align*}
    &\mathrm{(DY2A)} \qquad  \{ a \prec b, c, d \}_{[1]} = \{ a, b \prec c + b \succ c, d \}_{[1]},\\
    &\mathrm{(DY2B)} \qquad \{ a \succ b, c, d \}_{[1]} = \{ a, b \prec c , d \}_{[2]}, \\
    &\mathrm{(DY2C)} \qquad \{ a \prec b + a \succ b , c, d \}_{[2]} = \{ a, b \succ c , d \}_{[2]},\\
    &\mathrm{(DY2D)} \qquad \{ a \prec b + a \succ b , c, d \}_{[3]} = \{ a, b \prec c + b \succ c , d \}_{[3]},
\end{align*}
\begin{align*}
     &\mathrm{(DY3A)} \qquad \{ a, b, c \prec d + c \succ d \}_{[1]} = \{ a, b, c \}_{[1]} \prec d,\\
     &\mathrm{(DY3B)} \qquad \{ a, b, c \prec d + c \succ d \}_{[2]} = \{ a, b, c \}_{[2]} \prec d,\\
     &\mathrm{(DY3C)} \qquad \{ a, b, c \prec d \}_{[3]} = \{ a, b, c \}_{[3]} \prec d,\\
     &\mathrm{(DY3D)} \qquad \{ a, b, c \succ d \}_{[3]} = \{ a, b, c \}_\mathrm{Tot} \succ d,
\end{align*}
\begin{align*}
    &\mathrm{(DY4A)} \qquad  \{ \! \! \{ a \prec b, c, d \} \! \! \}_{[1]} = a \prec \{ \! \! \{ b, c, d \} \! \! \}_\mathrm{Tot},\\
    &\mathrm{(DY4B)} \qquad  \{ \! \! \{ a \succ b, c, d \} \! \! \}_{[1]} = a \succ \{ \! \! \{ b, c, d \} \! \! \}_{[1]},\\
     &\mathrm{(DY4C)} \qquad  \{ \! \! \{ a \prec b + a \succ b , c, d \} \! \! \}_{[2]} = a \succ \{ \! \! \{ b, c, d \} \! \! \}_{[2]},\\
     &\mathrm{(DY4D)} \qquad   \{ \! \! \{ a \prec b + a \succ b , c, d \} \! \! \}_{[3]} = a \succ \{ \! \! \{ b, c, d \} \! \! \}_{[3]},
\end{align*}
\begin{align*}
    &\mathrm{(DY5A)} \qquad  \{ \! \! \{ a, b \prec c + b \succ c , d \} \! \! \}_{[1]} = \{ \! \! \{ a, b, c \prec d + c \succ d \} \! \! \}_{[1]}, \\
    &\mathrm{(DY5B)} \qquad  \{ \! \! \{ a, b \prec c , d \} \! \! \}_{[2]} = \{ \! \! \{ a, b, c \prec d + c \succ d \} \! \! \}_{[2]},\\
     &\mathrm{(DY5C)} \qquad  \{ \! \! \{ a, b \succ c , d \} \! \! \}_{[2]} = \{ \! \! \{ a, b, c \prec d \} \! \! \}_{[3]},\\
     &\mathrm{(DY5C)} \qquad  \{ \! \! \{ a, b \prec c + b \succ c , d \} \! \! \}_{[3]} = \{ \! \! \{ a, b, c \succ d \} \! \! \}_{[3]},
\end{align*}
\begin{align*}
     &\mathrm{(DY6A)} \qquad  a \prec \{ b, c, d   \}_\mathrm{Tot} = \{ \! \! \{ a, b, c    \} \! \! \}_{[1]} \prec d, \\
    &\mathrm{(DY6B)} \qquad  a \succ \{ b, c, d   \}_{[1]} = \{ \! \! \{ a, b, c     \} \! \! \}_{[2]} \prec d, \\
    &\mathrm{(DY6C)} \qquad  a \succ \{ b, c, d   \}_{[2]} = \{ \! \! \{ a, b, c     \} \! \! \}_{[3]} \prec d, \\
    &\mathrm{(DY6D)} \qquad  a \succ \{ b, c, d   \}_{[3]} = \{ \! \! \{  a, b, c   \} \! \! \}_\mathrm{Tot} \succ d,
\end{align*}
\begin{align*}
     &\mathrm{(DY7A)} \qquad  \{    \{ a, b, c    \}_{[1]}, d, e    \}_{[1]}  = \{a, \{ \! \! \{ b, c , d    \} \! \! \}_\mathrm{Tot} , e \}_{[1]} = \{ a, b, \{ c, d, e \}_\mathrm{Tot} \}_{[1]},\\
    &\mathrm{(DY7B)} \qquad   \{    \{ a, b, c \}_{[2]}, d, e    \}_{[1]}  = \{a, \{ \! \! \{ b, c , d    \} \! \! \}_{[1]}, e \}_{[2]} = \{ a, b, \{ c, d, e \}_\mathrm{Tot} \}_{[2]}, \\
     &\mathrm{(DY7C)} \qquad   \{    \{ a, b, c    \}_{[3]}, d, e    \}_{[1]}  = \{a, \{ \! \! \{ b, c , d    \} \! \! \}_{[2]}, e \}_{[2]} = \{ a, b, \{ c, d, e \}_{[1]} \}_{[3]}, \\
      &\mathrm{(DY7D)} \qquad   \{    \{ a, b, c    \}_\mathrm{Tot}, d, e    \}_{[2]}  = \{a, \{ \! \! \{ b, c , d    \} \! \! \}_{[3]}, e \}_{[2]} = \{ a, b, \{ c, d, e \}_{[2]} \}_{[3]}, \\
      &\mathrm{(DY7E)} \qquad    \{    \{ a, b, c    \}_\mathrm{Tot} , d, e    \}_{[3]}  = \{a, \{ \! \! \{ b, c , d    \} \! \! \}_\mathrm{Tot} , e \}_{[3]} = \{ a, b, \{ c, d, e \}_{[3]} \}_{[3]},
\end{align*}
\begin{align*}
    &\mathrm{(DY8A)} \qquad  \{ a, \{ b, c , d\}_\mathrm{Tot} , e \}_{[1]} = \{   \{ \! \! \{ a, b, c \} \! \! \}_{[1]}, d, e \}_{[1]}, \\
     &\mathrm{(DY8B)} \qquad   \{ a, \{ b, c , d\}_{[1]} , e \}_{[2]} = \{   \{ \! \! \{ a, b, c \} \! \! \}_{[2]}, d, e \}_{[1]}, \\
     &\mathrm{(DY8C)} \qquad   \{ a, \{ b, c , d\}_{[2]} , e \}_{[2]} = \{   \{ \! \! \{ a, b, c \} \! \! \}_{[3]}, d, e \}_{[1]},\\
      &\mathrm{(DY8D)} \qquad   \{ a, \{ b, c , d\}_{[3]} , e \}_{[2]} = \{   \{ \! \! \{ a, b, c \} \! \! \}_\mathrm{Tot}, d, e \}_{[2]},\\
       &\mathrm{(DY8E)} \qquad   \{ a, \{ b, c , d\}_\mathrm{Tot} , e \}_{[3]} = \{   \{ \! \! \{ a, b, c \} \! \! \}_\mathrm{Tot}, d, e \}_{[3]},
\end{align*}
\begin{align*}
   &\mathrm{(DY9A)} \qquad   \{ \! \! \{    \{ \! \! \{ a, b, c    \} \! \! \}_{[1]}, d, e   \} \! \! \}_{[1]}  = \{ \! \! \{a,  \{ b, c , d    \}_\mathrm{Tot}, e \} \! \! \}_{[1]} = \{ \! \! \{ a, b, \{ \! \!  \{ c, d, e  \} \! \!  \}_\mathrm{Tot}  \} \! \!  \}_{[1]}, \\
   &\mathrm{(DY9B)} \qquad   \{ \! \! \{    \{ \! \! \{ a, b, c    \} \! \! \}_{[2]}, d, e   \} \! \! \}_{[1]}  = \{ \! \! \{a,  \{ b, c , d    \}_{[1]}, e \} \! \! \}_{[2]} = \{ \! \! \{ a, b, \{ \! \!  \{ c, d, e  \} \! \!  \}_\mathrm{Tot}  \} \! \!  \}_{[2]}, \\
   &\mathrm{(DY9C)} \qquad   \{ \! \! \{    \{ \! \! \{ a, b, c    \} \! \! \}_{[3]}, d, e   \} \! \! \}_{[1]}  = \{ \! \! \{a,  \{ b, c , d    \}_{[2]}, e \} \! \! \}_{[2]} = \{ \! \! \{ a, b, \{ \! \!  \{ c, d, e  \} \! \!  \}_{[1]}  \} \! \!  \}_{[3]}, \\
    &\mathrm{(DY9D)} \qquad    \{ \! \! \{    \{ \! \! \{ a, b, c    \} \! \! \}_\mathrm{Tot}, d, e   \} \! \! \}_{[2]}  = \{ \! \! \{a,  \{ b, c , d    \}_{[3]}, e \} \! \! \}_{[2]} = \{ \! \! \{ a, b, \{ \! \!  \{ c, d, e  \} \! \!  \}_{[2]}  \} \! \!  \}_{[3]}, \\
    &\mathrm{(DY9E)} \qquad    \{ \! \! \{    \{ \! \! \{ a, b, c    \} \! \! \}_\mathrm{Tot} , d, e   \} \! \! \}_{[3]}  = \{ \! \! \{a,  \{ b, c , d    \}_\mathrm{Tot}, e \} \! \! \}_{[3]} = \{ \! \! \{ a, b, \{ \! \!  \{ c, d, e  \} \! \!  \}_{[3]}  \} \! \!  \}_{[3]},
\end{align*}
\begin{align*}
    &\mathrm{(DY10A)} \qquad   \{ \! \! \{ a, \{ \! \! \{ b, c, d \} \! \! \}_\mathrm{Tot} , e \} \! \! \}_{[1]} = \{ \! \! \{ a, b, \{ c, d, e \}_\mathrm{Tot}  \} \! \! \}_{[1]}, \\
    &\mathrm{(DY10B)} \qquad   \{ \! \! \{ a, \{ \! \! \{ b, c, d \} \! \! \}_{[1]}, e \} \! \! \}_{[2]} = \{ \! \! \{ a, b, \{ c, d, e \}_\mathrm{Tot}  \} \! \! \}_{[2]}, \\
     &\mathrm{(DY10C)} \qquad   \{ \! \! \{ a, \{ \! \! \{ b, c, d \} \! \! \}_{[2]}, e \} \! \! \}_{[2]} = \{ \! \! \{ a, b, \{ c, d, e \}_{[1]} \} \! \! \}_{[3]}, \\
       &\mathrm{(DY10D)} \qquad  \{ \! \! \{ a, \{ \! \! \{ b, c, d \} \! \! \}_{[3]}, e \} \! \! \}_{[2]} = \{ \! \! \{ a, b, \{ c, d, e \}_{[2]} \} \! \! \}_{[3]},\\
       &\mathrm{(DY10E)} \qquad  \{ \! \! \{ a, \{ \! \! \{ b, c, d \} \! \! \}_\mathrm{Tot} , e \} \! \! \}_{[3]} = \{ \! \! \{ a, b, \{ c, d, e \}_{[3]} \} \! \! \}_{[3]},
\end{align*}
\begin{align*}
     &\mathrm{(DY11A)} \qquad  \{ a, b, \{ \! \! \{ c, d, e \} \! \! \}_\mathrm{Tot}  \}_{[1]} = \{ \! \! \{     \{ a, b, c \}_{[1]} , d, e   \} \! \! \}_{[1]},\\
    &\mathrm{(DY11B)} \qquad  \{ a, b, \{ \! \! \{ c, d, e \} \! \! \}_\mathrm{Tot}  \}_{[2]} = \{ \! \! \{     \{ a, b, c \}_{[2]} , d, e   \} \! \! \}_{[1]}, \\
     &\mathrm{(DY11C)} \qquad  \{ a, b, \{ \! \! \{ c, d, e \} \! \! \}_{[1]}    \}_{[3]} = \{ \! \! \{     \{ a, b, c \}_{[3]} , d, e   \} \! \! \}_{[1]}, \\
     &\mathrm{(DY11D)} \qquad   \{ a, b, \{ \! \! \{ c, d, e \} \! \! \}_{[2]}   \}_{[3]} = \{ \! \! \{     \{ a, b, c \}_\mathrm{Tot} , d, e   \} \! \! \}_{[2]}, \\
     &\mathrm{(DY11E)} \qquad    \{ a, b, \{ \! \! \{ c, d, e \} \! \! \}_{[3]}    \}_{[3]} = \{ \! \! \{     \{ a, b, c \}_\mathrm{Tot} , d, e   \} \! \! \}_{[3]}.
\end{align*}
Here for any $a, b, c \in D$, we used the notations
\begin{align}\label{total-dend}
    \{ a, b, c \}_\mathrm{Tot} = \{ a, b, c \}_{[1]} + \{ a, b, c \}_{[2]} + \{ a, b, c \}_{[3]} ~ \text{ and } ~ \{ \! \! \{ a, b, c \} \! \! \}_\mathrm{Tot} = \{ \! \! \{ a, b, c \} \! \! \}_{[1]} + \{ \! \! \{ a, b, c \} \! \! \}_{[2]} + \{ \! \! \{ a, b, c \} \! \! \}_{[3]}.
\end{align}
\end{defn}

We may denote a dendriform-Yamaguti algebra as above simply by $D$ when the underlying operations are clear from the context. Let $D$ and $D'$ be two dendriform-Yamaguti algebras. A {\bf homomorphism} of dendriform-Yamaguti algebras from $D$ to $D'$ is a linear map $\psi : D \rightarrow D'$ that preserves the corresponding operations. 

\begin{exam}
    Let $(D, \prec, \succ)$ be any dendriform algebra. Then for any $a, b, c \in D$, we define
    \begin{align*}
        \{ a, b, c \}_{[1]} =~& \{ \! \! \{ a, b, c \} \! \! \}_{[1]} = (a \prec b ) \prec c = a \prec (b \prec c + b \succ c), \\
        \{ a, b, c \}_{[2]} =~& \{ \! \! \{ a, b, c \} \! \! \}_{[2]} = (a \succ b) \prec c = a \succ (b \prec c),\\
        \{ a, b, c \}_{[3]} =~& \{ \! \! \{ a, b, c \} \! \! \}_{[3]} = (a \prec b + a \succ b) \succ c = a \succ (b \succ c).
    \end{align*}
    Then $(D, \prec, \succ, \{ ~, ~, ~ \}_{[1]},\{ ~, ~, ~ \}_{[2]}, \{ ~, ~, ~ \}_{[3]}, \{ \! \! \{ ~, ~ , ~ \} \! \! \}_{[1]}, \{ \! \! \{ ~, ~ , ~ \} \! \! \}_{[2]}, \{ \! \! \{ ~, ~ , ~ \} \! \! \}_{[3]} )$ is a dendriform-Yamaguti algebra.
\end{exam}

\begin{exam}
    A {\bf dendriform triple system} is a tuple $(D, \{ ~, ~ , ~ \}_{[1]}, \{ ~, ~ , ~ \}_{[2]}, \{ ~, ~, ~ \}_{[3]})$ of a vector space $D$ with linear operations $\{ ~, ~ , ~ \}_{[1]}, \{ ~, ~ , ~ \}_{[2]}, \{ ~, ~, ~ \}_{[3]} : D \otimes D \otimes D \rightarrow D$ satisfying
    \begin{equation}
        \{ \{ a, b, c \}_{[1]} , d, e \}_{[1]} = \{ a, \{ b, c, d \}_\mathrm{Tot}, e \}_{[1]} = \{ a, b, \{ c, d, e \}_\mathrm{Tot} \}_{[1]},
    \end{equation}
     \begin{equation}
        \{ \{ a, b, c \}_{[2]} , d, e \}_{[1]} = \{ a, \{ b, c, d \}_{[1]}, e \}_{[2]} = \{ a, b, \{ c, d, e \}_\mathrm{Tot} \}_{[2]},
    \end{equation}
     \begin{equation}
        \{ \{ a, b, c \}_{[3]} , d, e \}_{[1]} = \{ a, \{ b, c, d \}_{[2]}, e \}_{[2]} = \{ a, b, \{ c, d, e \}_{[1]} \}_{[3]},
    \end{equation}
     \begin{equation}
        \{ \{ a, b, c \}_\mathrm{Tot} , d, e \}_{[2]} = \{ a, \{ b, c, d \}_{[3]}, e \}_{[2]} = \{ a, b, \{ c, d, e \}_{[2]} \}_{[3]},
    \end{equation}
     \begin{equation}
        \{ \{ a, b, c \}_\mathrm{Tot} , d, e \}_{[3]} = \{ a, \{ b, c, d \}_\mathrm{Tot}, e \}_{[3]} = \{ a, b, \{ c, d, e \}_{[3]} \}_{[3]},
    \end{equation} 
    for $a, b, c, d, e \in D$. Here $\{ a, b, c \}_\mathrm{Tot} = \{ a, b, c \}_{[1]} + \{ a, b, c \}_{[2]} + \{ a, b, c \}_{[3]}$. It follows that a dendriform triple system can be regarded as a dendriform-Yamaguti algebra with trivial $\prec$ and $\succ$.
\end{exam}

\begin{exam}
    Any associative-Yamaguti algebra $(A, ~ \! \cdot ~ \! , \{ ~, ~,~ \}, \{ \! \! \{ ~, ~, ~ \} \! \! \})$ can be regarded as a dendriform-Yamaguti algebra in which $a \prec b = a\cdot b$, $ a \succ b = 0$ and
    \begin{align*}
        \{ a, b, c \}_{[1]} = \{ a, b, c \}, \quad  \{ \! \! \{ a, b, c \} \! \! \}_{[1]} = \{ \! \! \{ a, b, c \} \! \! \}, \quad \{ a, b, c \}_{[2]} = \{ a, b, c \}_{[3]} = \{ \! \! \{ a, b, c \} \! \! \}_{[2]}= \{ \! \! \{ a, b, c \} \! \! \}_{[3]} = 0,
    \end{align*}
    for all $a, b, c \in D$. More generally, a dendriform-Yamaguti algebra (\ref{dy-algebra}) in which exactly one of $\prec$ or $\succ$ is nontrivial, exactly one of $\{ ~, ~ , ~ \}_{[1]}, \{ ~, ~ , ~ \}_{[2]}$ or $\{ ~, ~ , ~ \}_{[3]}$ is nontrivial, and exactly one of $\{ \! \! \{ ~, ~ , ~ \} \! \! \}_{[1]}, \{ \! \! \{ ~, ~ , ~ \} \! \! \}_{[2]}$ or $\{ \! \! \{ ~, ~ , ~ \} \! \! \}_{[3]}$ is nontrivial, is nothing but an associative-Yamaguti algebra.
\end{exam}

\begin{thm}\label{total-assy}
    Let $(D, \prec, \succ, \{ ~, ~, ~ \}_{[1]},\{ ~, ~, ~ \}_{[2]}, \{ ~, ~, ~ \}_{[3]}, \{ \! \! \{ ~, ~ , ~ \} \! \! \}_{[1]}, \{ \! \! \{ ~, ~ , ~ \} \! \! \}_{[2]}, \{ \! \! \{ ~, ~ , ~ \} \! \! \}_{[3]} )$ be a dendriform-Yamaguti algebra. Then the quadruple $(D, ~ \! \cdot_\mathrm{Tot} ~ \! , \{ ~, ~ , ~ \}_\mathrm{Tot} , \{ \! \! \{ ~, ~, ~ \} \! \! \}_\mathrm{Tot})$ is an associative-Yamaguti algebra, where 
    \begin{align*}
        a \cdot_\mathrm{Tot} b :=  a \prec b + a \succ b, \text{ for } a, b \in D,
    \end{align*}
    and the operations $\{ ~, ~ , ~ \}_\mathrm{Tot}$ and $\{ \! \! \{ ~, ~ , ~ \} \! \! \}_\mathrm{Tot}$ are given in (\ref{total-dend}). Further, if $\psi : D \rightarrow D'$ is a homomorphism of dendriform-Yamaguti algebras, then $\psi$ is also a homomorphism of the corresponding associative-Yamaguti algebras.
\end{thm}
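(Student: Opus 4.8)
The plan is to avoid verifying the eleven identities (AY1)--(AY11) for the totalized operations one at a time, and instead to argue at the operadic level. By Definition~\ref{defn-dendy} a dendriform-Yamaguti algebra structure on $D$ is exactly a Yamaguti multiplication $(\pi,\theta,\vartheta)$ on $\mathrm{Dend}_D$, where $\pi([1];a,b)=a\prec b$, $\pi([2];a,b)=a\succ b$, $\theta([r];a,b,c)=\{a,b,c\}_{[r]}$ and $\vartheta([r];a,b,c)=\{\!\!\{a,b,c\}\!\!\}_{[r]}$; by Theorem~\ref{thm-ym-assy} an associative-Yamaguti algebra structure on $D$ is exactly a Yamaguti multiplication on $\mathrm{End}_D$. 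Since the defining conditions of a Yamaguti multiplication (Definition~\ref{defn-ym}) are written entirely in terms of the partial compositions $\circ_i$ and are linear, any morphism of nonsymmetric operads carries Yamaguti multiplications to Yamaguti multiplications. Thus it suffices to produce a morphism of operads $\mathrm{Tot}\colon\mathrm{Dend}_D\to\mathrm{End}_D$ under which $(\pi,\theta,\vartheta)$ maps to the totalized data.

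First I would define, for each $n\ge 1$, the linear \emph{totalization} map
\[
\mathrm{Tot}\colon \mathrm{Dend}_D(n)\to \mathrm{End}_D(n),\qquad (\mathrm{Tot}\,f)(a_1,\dots,a_n)=\sum_{r=1}^{n} f([r];a_1,\dots,a_n),
\]
sending $\mathrm{Id}_D\in\mathrm{Dend}_D(1)$ to $\mathrm{Id}_D\in\mathrm{End}_D(1)$. Then $(\mathrm{Tot}\,\pi)(a,b)=a\prec b+a\succ b=a\cdot_\mathrm{Tot}b$, and by (\ref{total-dend}) one has $(\mathrm{Tot}\,\theta)(a,b,c)=\{a,b,c\}_\mathrm{Tot}$ and $(\mathrm{Tot}\,\vartheta)(a,b,c)=\{\!\!\{a,b,c\}\!\!\}_\mathrm{Tot}$, so $\mathrm{Tot}$ sends $(\pi,\theta,\vartheta)$ to precisely the triple whose associated operations are those in the statement.

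The crux is to show $\mathrm{Tot}$ is a morphism of operads, i.e. $\mathrm{Tot}(f\circ_i g)=\mathrm{Tot}(f)\circ_i\mathrm{Tot}(g)$ for $f\in\mathrm{Dend}_D(m)$, $g\in\mathrm{Dend}_D(n)$ and $1\le i\le m$. Expanding the left-hand side amounts to summing the defining formula of $f\circ_i g$ in $\mathrm{Dend}_D$ over all labels $[r]\in C_{m+n-1}$, and here the three-way case split in that formula is the main point. For $r$ in the outer ranges $1\le r\le i-1$ and $i+n\le r\le m+n-1$, the inner slot always carries $g([1]+\cdots+[n];-)=(\mathrm{Tot}\,g)(-)$, and after the shift $r\mapsto r-n+1$ in the third range the outer labels run exactly over the $[r]$ with $r\in\{1,\dots,m\}\setminus\{i\}$. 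For $r$ in the middle range $i\le r\le i+n-1$, reindexing by $s=r-i+1$ collapses $\sum_{s=1}^{n} g([s];-)$ into $(\mathrm{Tot}\,g)(-)$ and fixes the outer label at $[i]$. Collecting all contributions yields $\sum_{r=1}^{m} f([r];a_1,\dots,(\mathrm{Tot}\,g)(a_i,\dots,a_{i+n-1}),\dots,a_{m+n-1})$, which is exactly $\big((\mathrm{Tot}\,f)\circ_i(\mathrm{Tot}\,g)\big)(a_1,\dots,a_{m+n-1})$. This is the only nontrivial computation; the unit and arity conditions are immediate.

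Granting this, $(\mathrm{Tot}\,\pi,\mathrm{Tot}\,\theta,\mathrm{Tot}\,\vartheta)$ is a Yamaguti multiplication on $\mathrm{End}_D$, so Theorem~\ref{thm-ym-assy} gives that $(D,\cdot_\mathrm{Tot},\{~,~,~\}_\mathrm{Tot},\{\!\!\{~,~,~\}\!\!\}_\mathrm{Tot})$ is an associative-Yamaguti algebra. For the functorial statement, a homomorphism $\psi\colon D\to D'$ preserves all nine structure operations, hence preserves each of the three sums defining $\cdot_\mathrm{Tot}$, $\{~,~,~\}_\mathrm{Tot}$ and $\{\!\!\{~,~,~\}\!\!\}_\mathrm{Tot}$, and is therefore a homomorphism of the totalized associative-Yamaguti algebras. (Alternatively one may prove each (AY$k$) directly by adding the corresponding block (DY$k$A)--(DY$k$E) of dendriform-Yamaguti identities; there the obstacle is purely bookkeeping, whereas the operadic route packages all these additions into the single identity $\mathrm{Tot}(f\circ_i g)=\mathrm{Tot}(f)\circ_i\mathrm{Tot}(g)$.)
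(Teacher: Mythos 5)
Your proof is correct, but it takes a genuinely different route from the paper's. The paper proves Theorem \ref{total-assy} by direct summation of identities: adding (DY1A)--(DY1C) yields (AY1), adding the left- and right-hand sides of (DY2A)--(DY2D) yields (AY2), and in general the block (DY$i$A)--(DY$i$D) (resp.\ (DY$j$A)--(DY$j$E)) sums to (AY$i$) for $i=3,\dots,6$ (resp.\ to (AY$j$) for $j=7,\dots,11$); the homomorphism statement is then checked directly, exactly as you do. You instead work operadically: you introduce the totalization maps $\mathrm{Tot}\colon \mathrm{Dend}_D(n)\to\mathrm{End}_D(n)$, prove the single identity $\mathrm{Tot}(f\circ_i g)=\mathrm{Tot}(f)\circ_i\mathrm{Tot}(g)$ (your three-range case analysis is right: the outer ranges contribute the labels $[r]$, $r\in\{1,\dots,m\}\setminus\{i\}$, each carrying $\mathrm{Tot}\,g$ in the $i$-th slot, while the middle range collapses to the label $[i]$ by linearity of $f$ in that slot), observe that any morphism of nonsymmetric operads carries Yamaguti multiplications to Yamaguti multiplications because Definition \ref{defn-ym} is written purely in terms of partial compositions, and then invoke Theorem \ref{thm-ym-assy}. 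This packages all the additions the paper performs into one structural computation, and the morphism $\mathrm{Tot}$ is reusable (it is the same map underlying the classical passage from dendriform to associative algebras). The one caveat is that your argument rests on the identification of Definition \ref{defn-dendy} with Yamaguti multiplications on $\mathrm{Dend}_D$: the paper asserts this identification (it is how the definition is motivated) but never verifies it label by label, and a fully self-contained version of your proof would need that verification, which is essentially the same expansion bookkeeping as the paper's direct route, merely relocated. Granting the identification, as the paper's own framing invites, your proof is complete and correct.
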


\begin{proof}
    By additing the identities (DY1A)-(DY1C), one simply obtain 
    \begin{align*}
        (a \cdot_\mathrm{Tot} b ) \cdot_\mathrm{Tot} c - a \cdot_\mathrm{Tot} (b \cdot_\mathrm{Tot} c) + \{ a, b, c \}_\mathrm{Tot} - \{ \! \! \{ a, b, c \} \! \! \}_\mathrm{Tot} = 0.
    \end{align*}
    Thus, the identity (\ref{ay1}) of an associative-Yamaguti algebra follows. Similarly, by adding the left-hand sides of the identities (DY2A)-(DY2D) and also the right-hand sides of the same identities, one gets
    \begin{align*}
        \{ a \cdot_\mathrm{Tot} b , c, d \}_\mathrm{Tot} = \{ a, b \cdot_\mathrm{Tot} c, d \}_\mathrm{Tot},
    \end{align*}
    which is the identity (\ref{ay2}). The other identities of an associative-Yamaguti algebra similarly follow by applying
    \begin{center}
        (DYiA) + (DYiB) + (DYiC) + (DYiD) $\Rightarrow$ (AYi), for $i=3,4,5,6,$
    \end{center}
    \begin{center}
        (DYjA) + (DYjB) + (DYjC) + (DYjD) + (DYjE) $\Rightarrow$ (AYj), for $j=7,8,9,10,11$.
    \end{center}
    This proves the first part of the theorem. For the second part, we observe that
    \begin{align*}
        \psi (a \cdot_\mathrm{Tot} b) = \psi (a \prec b + a \succ b) = \psi (a) \prec' \psi (b) + \psi (a) \succ' \psi (b) = \psi (a) \cdot'_\mathrm{Tot} \psi (b), 
        \end{align*}
        \begin{align*}
        \psi (\{ a, b, c \}_\mathrm{Tot} ) =~& \psi (\{ a, b, c \}_{[1]} + \{ a, b, c \}_{[2]} + \{ a, b, c \}_{[3]}) \\
        =~& \{ \psi (a) , \psi (b), \psi (c) \}'_{[1]} + \{ \psi (a), \psi (b), \psi (c) \}'_{[2]} + \{ \psi (a), \psi (b), \psi (c) \}'_{[3]} \\=~& \{ \psi (a) , \psi (b) , \psi (c) \}'_\mathrm{Tot},
    \end{align*}
    and similarly, $\psi (\{ \! \! \{ a, b, c \} \! \! \}_\mathrm{Tot} ) = \{ \! \! \{ \psi (a), \psi (b) , \psi (c) \} \! \! \}'_\mathrm{Tot}$, for all $a, b, c \in D.$ Hence the map $\psi : D \rightarrow D'$ is a homomorphism of the corresponding associative-Yamaguti algebras.
\end{proof}

We will now consider relative Rota-Baxter operators on associative-Yamaguti algebras and find their relationships with dendriform-Yamaguti algebras.

\begin{defn}
   Let $(A, ~ \! \cdot ~ \! , \{ ~, ~ , ~ \} , \{ \! \! \{ ~, ~, ~ \} \! \! \})$ be an associative-Yamaguti algebra and  $(M, ~ \! \cdot ~ \! , \{ ~, ~ , ~ \} , \{ \! \! \{ ~, ~, ~ \} \! \! \})$  be a representation of it. Then a linear map $R: M \rightarrow A$ is said to be a {\bf relative Rota-Baxter operator} (also called an {\bf $\mathcal{O}$-operator}) if
\begin{align*}
    R(u ) \cdot R(v) =~& R (R(u) \cdot v + u \cdot R(v)),\\
    \{ R(u), R(v) , R(w) \} =~& R \big( \{ R(u), R(v), w \} + \{ R(u), v, R(w) \} + \{ u, R(v) , R(w) \} \big),\\
    \{ \! \! \{  R(u), R(v)  , R(w)  \} \! \! \} =~& R \big(    \{ \! \! \{ R(u) , R(v) , w \} \! \! \} + \{ \! \! \{ R(u) , v , R(w) \} \! \! \} + \{ \! \! \{ u , R(v) , R(w) \} \! \! \} \big), \text{ for all } u, v, w \in M.
\end{align*}  
\end{defn}
It is easy to see that an invertible linear map $f: A \rightarrow M$ is a derivation on $A$ with values in $M$ if and only if the inverse $f^{-1}: M \rightarrow A$ is a relative Rota-Baxter operator.

The following result provides an equivalent characterization of relative Rota-Baxter operators.

\begin{prop}
    Let $(A, ~ \! \cdot ~ \! , \{ ~, ~, ~ \}, \{ \! \! \{ ~, ~, ~ \} \! \! \})$ be an associative-Yamaguti algebra and $(M, ~ \! \cdot ~ \! , \{ ~, ~, ~ \}, \{ \! \! \{ ~, ~, ~ \} \! \! \})$ be a representation of it. Then a linear map $R: M \rightarrow A$ is a relative Rota-Baxter operator if and only if the graph $\mathrm{Gr}(R) = \{ (R(u), u) ~ \! | ~ \! u \in M \}$ is a subalgebra of the semidirect product associative-Yamaguti algebra $(A \oplus M, ~ \! \cdot_\ltimes ~ \! , \{ ~, ~ , ~ \}_\ltimes , \{ \! \! \{ ~ , ~ , ~ \} \! \! \}_\ltimes)$.
\end{prop}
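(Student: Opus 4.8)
The plan is to reduce the statement to a direct computation using the explicit formulas for the semidirect product operations recorded in Proposition \ref{prop-assy-semid}. First I would observe that, since $R$ is linear, the graph $\mathrm{Gr}(R) = \{ (R(u), u) ~ \! | ~ \! u \in M \}$ is automatically a linear subspace of $A \oplus M$. Hence $\mathrm{Gr}(R)$ is a subalgebra of the semidirect product associative-Yamaguti algebra precisely when it is closed under the three structure operations $\cdot_\ltimes$, $\{ ~, ~, ~ \}_\ltimes$ and $\{ \! \! \{ ~, ~, ~ \} \! \! \}_\ltimes$. The key elementary fact driving the equivalence is that a pair $(x, y) \in A \oplus M$ lies in $\mathrm{Gr}(R)$ if and only if $x = R(y)$.

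The main step is to evaluate each of the three operations on elements of the form $(R(u), u), (R(v), v), (R(w), w)$ with $u, v, w \in M$. Using the formulas of Proposition \ref{prop-assy-semid}, one gets $(R(u), u) \cdot_\ltimes (R(v), v) = (R(u) \cdot R(v), ~ R(u) \cdot v + u \cdot R(v))$, and analogously the two triple operations produce a pair whose $A$-component is the structure operation of $A$ applied to $R(u), R(v), R(w)$ and whose $M$-component is the sum of the three mixed terms in which exactly one input escapes the action of $R$. Consequently, closure under $\cdot_\ltimes$ is equivalent to $R(u) \cdot R(v) = R(R(u) \cdot v + u \cdot R(v))$, which is exactly the first defining identity of a relative Rota-Baxter operator; closure under $\{ ~, ~, ~ \}_\ltimes$ is equivalent to the second identity; and closure under $\{ \! \! \{ ~, ~, ~ \} \! \! \}_\ltimes$ is equivalent to the third.

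Assembling these three equivalences gives both implications simultaneously: $\mathrm{Gr}(R)$ is closed under all three operations if and only if $R$ satisfies all three relative Rota-Baxter identities, i.e.\ is a relative Rota-Baxter operator. I do not expect a genuine obstacle here; the entire content is the bookkeeping that matches the semidirect product formulas against the three defining relations, and the one point deserving care is simply to confirm that the $M$-components produced by the semidirect product reproduce verbatim the three-term expressions appearing inside $R$ in the definition of a relative Rota-Baxter operator, which is immediate from the placement of the representation actions in Proposition \ref{prop-assy-semid}.
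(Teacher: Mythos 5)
Your proof is correct and is exactly the routine verification the paper has in mind: the paper states this proposition without proof, treating it as immediate from the semidirect product formulas of Proposition \ref{prop-assy-semid}, and your computation supplies precisely that argument. Matching the $A$-component against $R$ applied to the $M$-component of each of the three operations evaluated on graph elements recovers the three defining identities of a relative Rota-Baxter operator, so nothing is missing.
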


\begin{thm}\label{thm-relrba-dendy}
    Let $(A, ~ \! \cdot ~ \! , \{ ~, ~ , ~ \} , \{ \! \! \{ ~, ~, ~ \} \! \! \})$ be an associative-Yamaguti algebra and  $(M, ~ \! \cdot ~ \! , \{ ~, ~ , ~ \} , \{ \! \! \{ ~, ~, ~ \} \! \! \})$  be a representation of it. Suppose $R: M \rightarrow A$ is a relative Rota-Baxter operator. For any $u, v, w \in M$, we define
    \begin{align*}
        & \qquad \qquad  u \prec v := u \cdot R(v), \quad u \succ v := R(u) \cdot v, \\
         &\{ u, v, w \}_{[1]} := \{ u, R(u), R(v) \}, \quad \{ u, v, w \}_{[2]} := \{ R(u), v, R(w) \}, \quad \{ u, v, w \}_{[3]} := \{ R(u), R(v), w \},\\
          &\{ \! \! \{ u, v, w \} \! \! \}_{[1]} :=  \{ \! \! \{ u, R(u), R(v) \} \! \! \}, \quad \{ \! \! \{ u, v, w \} \! \! \}_{[2]} := \{ \! \! \{ R(u), v, R(w) \} \! \! \}, \quad \{ \! \! \{ u, v, w \} \! \! \}_{[3]} := \{ \! \! \{ R(u), R(v), w \} \! \! \}.
    \end{align*}
    Then $(M, \prec, \succ, \{ ~, ~ , ~ \}_{[1]}, \{ ~, ~, ~ \}_{[2]}, \{ ~, ~, ~ \}_{[3]} , \{ \! \! \{ ~, ~ , ~ \} \! \! \}_{[1]},  \{ \! \! \{ ~, ~ , ~ \} \! \! \}_{[2]},  \{ \! \! \{ ~, ~ , ~ \} \! \! \}_{[3]})$ is a dendriform-Yamaguti algebra.
\end{thm}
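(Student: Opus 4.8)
The plan is to verify the full list of defining identities (DY1A)--(DY11E) directly, exploiting the observation that they are in bijection with the $58$ \emph{mixed} (representation) instances of (AY1)--(AY11)---those in which exactly one argument lies in $M$ while the others are images under $R$. Throughout I read the subscript $[i]$ as marking the slot occupied by the bare element of $M$, so that $\{u,v,w\}_{[1]}=\{u,R(v),R(w)\}$, $\{u,v,w\}_{[2]}=\{R(u),v,R(w)\}$, $\{u,v,w\}_{[3]}=\{R(u),R(v),w\}$, and likewise for $\{\!\!\{~,~,~\}\!\!\}$. The first step is to rewrite the three defining conditions of the relative Rota-Baxter operator $R$ in terms of the new operations as the intertwining relations
\begin{align*}
    R(u \prec v + u \succ v) &= R(u) \cdot R(v), \\
    R\big(\{u,v,w\}_{[1]} + \{u,v,w\}_{[2]} + \{u,v,w\}_{[3]}\big) &= \{R(u),R(v),R(w)\}, \\
    R\big(\{\!\!\{u,v,w\}\!\!\}_{[1]} + \{\!\!\{u,v,w\}\!\!\}_{[2]} + \{\!\!\{u,v,w\}\!\!\}_{[3]}\big) &= \{\!\!\{R(u),R(v),R(w)\}\!\!\},
\end{align*}
for all $u,v,w\in M$; in the notation (\ref{total-dend}) these say precisely that $R$ intertwines $\cdot_\mathrm{Tot}$, $\{~,~,~\}_\mathrm{Tot}$ and $\{\!\!\{~,~,~\}\!\!\}_\mathrm{Tot}$ on $M$ with $\cdot$, $\{~,~,~\}$ and $\{\!\!\{~,~,~\}\!\!\}$ on $A$.

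The core of the argument is a uniform reduction. After substituting the definitions of $\prec,\succ$ and of the six ternary operations into a fixed identity (DY$i$X), every factor carrying a $\mathrm{Tot}$ subscript (equivalently, every combination $b\prec c + b\succ c$) occupies a non-bare slot and hence becomes an argument of $R$; the intertwining relations then replace it by the matching product or bracket of $R$-images lying in $A$. Once this is done every entry but one lies in $A$, the exceptional one being the bare element of $M$ in the position recorded by the subscript, and the identity becomes \emph{verbatim} the mixed form of (AY$i$) with its single $M$-slot in position $i$, which holds because $M$ is a representation. For instance, (DY1A)--(DY1C) are exactly the three mixed instances of (AY1); (DY2A) collapses on both sides to $\{a,\,R(b)\cdot R(c),\,R(d)\}$ after using $R(b\prec c + b\succ c)=R(b)\cdot R(c)$; and (DY3D) reduces to the mixed (AY3) with bare slot $d$, since its two sides both expand to $\{R(a),R(b),R(c)\}\cdot d$ (using $c\succ d=R(c)\cdot d$ on the left and $\{a,b,c\}_\mathrm{Tot}\succ d=R(\{a,b,c\}_\mathrm{Tot})\cdot d$ on the right).

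The nested ternary identities (DY7)--(DY11) follow by the same mechanism applied at two levels: one first rewrites each inner triple bracket of $R$-images, $\{R(-),R(-),R(-)\}$ or $\{\!\!\{R(-),R(-),R(-)\}\!\!\}$, as $R$ of the corresponding $\mathrm{Tot}$-bracket through the second or third intertwining relation, and then reads the outer bracket as the appropriate mixed instance of (AY7)--(AY11). Since the $58$ identities fall into the eleven blocks attached to (AY1)--(AY11), and each block exhausts the mixed versions of its axiom, this completes the verification. The main obstacle is purely the bookkeeping: at every (possibly inner) slot one must invoke the correct one of the three intertwining relations, and the choice between the $\{~,~,~\}$- and $\{\!\!\{~,~,~\}\!\!\}$-versions at an inner bracket is forced by the surrounding bracket type, so a single misidentification of a slot's type or position derails the reduction.

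Finally, a global consistency check is supplied by the characterization preceding the theorem together with Theorem \ref{total-assy}: summing the members of each block, (DY$i$A)$+\cdots$, returns exactly (AY$i$) for the total operations, reflecting the fact that $\mathrm{Gr}(R)=\{(R(u),u):u\in M\}$ is an associative-Yamaguti subalgebra of the semidirect product on $A\oplus M$, whose transported structure along $u\mapsto(R(u),u)$ is precisely $(M,\cdot_\mathrm{Tot},\{~,~,~\}_\mathrm{Tot},\{\!\!\{~,~,~\}\!\!\}_\mathrm{Tot})$. This guarantees that the refined (split) identities are at least compatible at the total level, which is a reassuring sanity check but not a substitute for the slot-by-slot verification above.
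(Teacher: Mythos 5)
Your proposal is correct and follows essentially the same route as the paper's own proof: expand the induced operations, use the relative Rota-Baxter identities (your ``intertwining relations'') to collapse each sum $b \prec c + b \succ c$ and each $\mathrm{Tot}$-bracket sitting inside $R$, and then recognize the result as the mixed (one-slot-in-$M$) instance of the corresponding axiom (AY1)--(AY11) for the representation. The paper simply carries this out explicitly for the blocks (DY1) and (DY2) and asserts the remaining blocks follow in the same way, which is exactly the uniform reduction you describe.
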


\begin{proof}
    For any $u, v, w \in M$, we have
    \begin{align*}
        &(u \prec v) \prec w - u \prec (v \prec w + v \succ w) + \{ u, v, w \}_{[1]} - \{ \! \! \{ u, v, w \} \! \! \}_{[1]} \\
        &= (u \cdot R(v)) \cdot R(w)- u \cdot R ( v \cdot R(w) + R(v) \cdot w ) + \{ u, R(v) , R(w) \} - \{ \! \! \{ u, R(v), R(w) \} \! \! \} \\
       & = (u \cdot R(v)) \cdot R(w)- u \cdot (R(v) \cdot R(w)) + \{ u, R(v) , R(w) \} - \{ \! \! \{ u, R(v), R(w) \} \! \! \} \stackrel{(\text{\ref{ay1}})}{=} 0,
    \end{align*}
    \begin{align*}
        &(u \succ v) \prec w - u \succ (v \prec w) + \{ u, v, w \}_{[2]} - \{ \! \! \{ u, v, w \} \! \! \}_{[2]} \\
        &= (R(u) \cdot v) \cdot R(w) - R(u) \cdot (v \cdot R(w)) + \{ R(u), v, R(w) \} - \{ \! \! \{ R(u), v, R(w) \} \! \! \}  \stackrel{(\text{\ref{ay1}})}{=} 0,
    \end{align*}
    and
    \begin{align*}
        &(u \prec v + u \succ v) \succ w - u \succ (v \succ w) + \{ u, v, w \}_{[3]} - \{ \! \! \{ u, v, w \} \! \! \}_{[3]} \\
        &= R (u \cdot R(v) + R(u) \cdot v) \cdot w - R(u) \cdot (R(v) \cdot w) + \{ R(u), R(v) , w \} - \{ \! \! \{ R(u), R(v), w \} \! \! \} \\
        &= (R(u) \cdot R(v)) \cdot w - R(u) \cdot (R(v) \cdot w) + \{ R(u), R(v), w \} - \{ \! \! \{ R(u), R(v), w \} \! \! \}  \stackrel{(\text{\ref{ay1}})}{=} 0.
    \end{align*}
    Hence, the identities (DY1A)-(DY1C) follow. Similarly, for any $u, v, w, s \in M$, we see that
    \begin{align*}
        \{ u \prec v, w, s \}_{[1]} =& \{ u \cdot R(v), R(w), R(s) \}  \stackrel{(\text{\ref{ay2}})}{=} \{ u, R(v) \cdot R(w) , R(s) \}  \\
        & \qquad =  \{ u, R(R(v) \cdot w + v \cdot R(w) ), R(s) \} = \{ u, v \prec w + v \succ w , s \}_{[1]}, 
    \end{align*}
    \begin{align*}
        \{ u \succ v, w, s \}_{[1]} = \{ R(u) \cdot v, R(w), R(s) \} \stackrel{(\text{\ref{ay2}})}{=} \{ R(u), v \cdot R(w), R(s) \}  = \{ u, v \prec w, s \}_{[2]},
    \end{align*}
    \begin{align*}
        \{ u \prec v + u \succ v, w, s \}_{[2]} =~& \{  R (R(u) \cdot v + u \cdot R(v)) , w, R(s)  \} = \{ R(u) \cdot R(v), w, R(s) \} \\ 
       & \qquad \stackrel{(\text{\ref{ay2}})}{=} \{ R(u) , R(v) \cdot w, R(s) \} 
        = \{ u, v \succ w, s \}_{[2]},
    \end{align*}
    \begin{align*}
        \{ u \prec v + u \succ v, w, s \}_{[3]} =& \{  R (R(u) \cdot v + u \cdot R(v)) , R(w), s \} = \{ R(u) \cdot R(v), R(w), s \} \\
        & \qquad \stackrel{(\text{\ref{ay2}})}{=} \{ R(u), R(v) \cdot R(w), s \}  = \{ u, v \prec w + v \succ w , s \}_{[3]}.
    \end{align*}
    Thus, by using (\ref{ay2}), we verified the identities (DY2A)-(DY2D) of a dendriform-Yamaguti algebra. In the same way, by using (AYi), one may show the identities (DYiA)-(DYiD), for $i=3, 4,5,6$. Also, by using (AYj), one may derive the identities (DYjA)-(DYjE), for $j = 7,8,9,10,11$. Hence, the result follows.
\end{proof}

In the previous theorem, we have shown that a relative Rota-Baxter operator induces a dendriform-Yamaguti algebra. The next result proves the converse. Namely, it shows that any dendriform-Yamaguti algebra can be obtained from a relative Rota-Baxter operator.

\begin{thm}\label{thm-last}
Let $(D, \prec, \succ, \{ ~, ~ , ~ \}_{[1]}, \{ ~, ~, ~ \}_{[2]}, \{ ~, ~, ~ \}_{[3]} , \{ \! \! \{ ~, ~ , ~ \} \! \! \}_{[1]},  \{ \! \! \{ ~, ~ , ~ \} \! \! \}_{[2]},  \{ \! \! \{ ~, ~ , ~ \} \! \! \}_{[3]})$ be a dendriform-Yamaguti algebra with the (total) associative-Yamaguti algebra $D_\mathrm{Tot} = (D, ~ \! \cdot_\mathrm{Tot} ~ \! , \{ ~, ~, ~ \}_\mathrm{Tot}, \{ \! \! \{ ~, ~, ~ \} \! \! \}_\mathrm{Tot})$. We define maps
\begin{align*}
    &\cdot : D_\mathrm{Tot} \otimes D \rightarrow D ~~ \text{ by } ~~ a \cdot b := a \succ b, \qquad \cdot : D \otimes D_\mathrm{Tot} \rightarrow D ~~ \text{ by } ~~ a \cdot b := a \prec b, \\
    & \qquad \qquad \{ ~, ~, ~ \} :  D_\mathrm{Tot} \otimes  D_\mathrm{Tot} \otimes D \rightarrow D ~~ \text{ by } ~~ \{ a, b, c \} := \{ a, b, c \}_{[3]},\\
    & \qquad \qquad \{ ~, ~, ~ \} :  D_\mathrm{Tot} \otimes D  \otimes  D_\mathrm{Tot} \rightarrow D ~~ \text{ by } ~~ \{ a, b, c \} := \{ a, b, c \}_{[2]}, \\
    & \qquad \qquad \{ ~, ~, ~ \} :   D  \otimes  D_\mathrm{Tot} \otimes D_\mathrm{Tot} \rightarrow D ~~ \text{ by } ~~ \{ a, b, c \} := \{ a, b, c \}_{[1]},\\
    & \qquad \qquad \{ \! \! \{ ~, ~, ~ \} \! \! \}:  D_\mathrm{Tot} \otimes  D_\mathrm{Tot} \otimes D \rightarrow D ~~ \text{ by } ~~ \{ \! \! \{ a, b, c \} \! \! \} := \{ \! \! \{  a, b, c \} \! \! \}_{[3]},\\
    & \qquad \qquad \{ \! \! \{ ~, ~, ~ \} \! \! \} :  D_\mathrm{Tot} \otimes D  \otimes  D_\mathrm{Tot} \rightarrow D ~~ \text{ by } ~~ \{ \! \! \{ a, b, c \} \! \! \} := \{ \! \! \{  a, b, c \} \! \! \}_{[2]}, \\
    & \qquad \qquad \{ \! \! \{ ~, ~, ~ \} \! \! \} :   D  \otimes  D_\mathrm{Tot} \otimes D_\mathrm{Tot} \rightarrow D ~~ \text{ by } ~~ \{ \! \! \{ a, b, c \} \! \! \} := \{ \! \! \{ a, b, c \} \! \! \}_{[1]}.
\end{align*}

\begin{itemize}
    \item[(i)] With the above action maps, $D$ is a representation of the associative-Yamaguti algebra $D_\mathrm{Tot}$.
    \item[(ii)] Then the identity map $\mathrm{Id} : D \rightarrow D_\mathrm{Tot}$ is a relative Rota-Baxter operator. Moreover, the induced dendriform-Yamaguti algebra structure on the space $D$ coincides with the prescribed one.
\end{itemize}
\end{thm}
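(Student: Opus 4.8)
\emph{The plan} is to treat the two parts separately, with essentially all of the work sitting in part (i); part (ii) will then follow almost formally.

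For part (i) I would verify directly that $D$, equipped with the six ternary actions and the two bilinear actions displayed in the statement, satisfies the axioms of a representation of $D_{\mathrm{Tot}}$. Recall that these axioms are the $58$ identities obtained from (AY1)--(AY11) by letting exactly one argument lie in the module $D$ and the remaining arguments lie in $D_{\mathrm{Tot}}$. My claim is that, once the prescribed actions are inserted, these $58$ identities are \emph{literally} the $58$ identities (DY1A)--(DY11E) of Definition \ref{defn-dendy}. The mechanism is the following dictionary: an argument occupying the module forces any $\cdot$ acting on it to specialize to the one-sided product $a\succ u$ (a left action $A\otimes M\to M$) or $u\prec a$ (a right action $M\otimes A\to M$), whereas a product of two $D_{\mathrm{Tot}}$-arguments is $\cdot_{\mathrm{Tot}}=\prec+\succ$; simultaneously, the slot $s\in\{1,2,3\}$ in which the module argument sits inside a ternary bracket selects the component $\{\,,\,,\,\}_{[s]}$ (respectively $\{\!\!\{\,,\,,\,\}\!\!\}_{[s]}$), while a bracket all of whose arguments lie in $D_{\mathrm{Tot}}$ becomes the total $\{\,,\,,\,\}_{\mathrm{Tot}}$ (respectively $\{\!\!\{\,,\,,\,\}\!\!\}_{\mathrm{Tot}}$). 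For example, placing the module in the three slots of (AY1) returns exactly (DY1A), (DY1B), (DY1C); placing it in the four slots of (AY2) returns (DY2A)--(DY2D); and placing it in the five slots of (AY7) returns the five three-fold equalities (DY7A)--(DY7E). In general the letters A, B, C, D, E of Definition \ref{defn-dendy} are indexed precisely by the slot of the module argument, so the $58$ representation axioms match the $58$ dendriform-Yamaguti identities one-for-one. As the latter hold by hypothesis, $D$ is a representation of $D_{\mathrm{Tot}}$.

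For part (ii), once (i) is in place I would observe that with $R=\mathrm{Id}$ the three relative Rota-Baxter conditions reduce to the very definitions $\cdot_{\mathrm{Tot}}=\prec+\succ$ and $\{\,,\,,\,\}_{\mathrm{Tot}}=\{\,,\,,\,\}_{[1]}+\{\,,\,,\,\}_{[2]}+\{\,,\,,\,\}_{[3]}$ (and likewise for the double bracket). Indeed the left-hand side of each condition is a total operation of $D_{\mathrm{Tot}}$, while its right-hand side, read through the actions of part (i), is the corresponding sum of three components: for instance $\{\mathrm{Id}(u),\mathrm{Id}(v),w\}+\{\mathrm{Id}(u),v,\mathrm{Id}(w)\}+\{u,\mathrm{Id}(v),\mathrm{Id}(w)\}=\{u,v,w\}_{[3]}+\{u,v,w\}_{[2]}+\{u,v,w\}_{[1]}=\{u,v,w\}_{\mathrm{Tot}}$. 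Hence $\mathrm{Id}$ is a relative Rota-Baxter operator. Substituting $R=\mathrm{Id}$ into the construction of Theorem \ref{thm-relrba-dendy} then gives back the prescribed operations, e.g.\ $u\prec v=u\cdot\mathrm{Id}(v)=u\prec v$, $u\succ v=\mathrm{Id}(u)\cdot v=u\succ v$, $\{u,v,w\}_{[1]}=\{u,\mathrm{Id}(v),\mathrm{Id}(w)\}=\{u,v,w\}_{[1]}$, and similarly for the remaining five ternary operations; thus the induced dendriform-Yamaguti structure coincides with the given one.

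The only genuine difficulty lies in the index bookkeeping of part (i) for the composite identities (AY7)--(AY11): there an inner bracket is evaluated first, its output lands in the module $D$, and one must correctly read off the slot this output then occupies inside the outer bracket in order to assign the outer component index. Keeping this slot-to-index assignment consistent is what makes the verification delicate, though each individual instance is routine; everything else, including the whole of part (ii), is essentially formal.
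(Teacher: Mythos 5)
Your proposal is correct and follows essentially the same route as the paper: part (i) is exactly the paper's observation that the $58$ representation conditions (one module slot in each of (AY1)--(AY11)) match one-for-one with the $58$ identities (DY1A)--(DY11E), with your slot-to-index dictionary making explicit what the paper leaves as "direct calculations," and part (ii) reproduces the paper's computation that with $R=\mathrm{Id}$ the relative Rota-Baxter conditions become the defining sums for $\cdot_{\mathrm{Tot}}$, $\{~,~,~\}_{\mathrm{Tot}}$, $\{\!\!\{~,~,~\}\!\!\}_{\mathrm{Tot}}$ and that substituting $R=\mathrm{Id}$ into Theorem \ref{thm-relrba-dendy} returns the given operations.
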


\begin{proof}
    (i) This part follows from direct calculations. One can easily see that each identity  of the dendriform-Yamaguti algebra (there are a total $58$ identities) corresponds to exactly one condition of the representation (here also there are a total $58$ conditions).

    (ii) For any $a, b \in D$, we have
    \begin{align*}
        \mathrm{Id} (a) \cdot_\mathrm{Tot} \mathrm{Id} (b) = a \prec b + a \succ b = \mathrm{Id} \big(  {\mathrm{Id} (a)} \cdot b ~ \! +~ \! a \cdot \mathrm{Id}(b)  \big).
    \end{align*}
    Similarly, one can show that
    \begin{align*}
        \{ \mathrm{Id} (a), \mathrm{Id} (b), \mathrm{Id} (c) \}_\mathrm{Tot} =~& \mathrm{Id} \big(   \{ \mathrm{Id} (a), \mathrm{Id} (b), c \} + \{ \mathrm{Id} (a), b, \mathrm{Id} (c) \}  + \{ a, \mathrm{Id} (b), \mathrm{Id} (c) \}  \big),\\
        \{ \! \! \{ \mathrm{Id} (a), \mathrm{Id} (b), \mathrm{Id} (c) \} \! \! \}_\mathrm{Tot} =~& \mathrm{Id} \big(   \{ \! \! \{ \mathrm{Id} (a), \mathrm{Id} (b), c \} \! \! \} + \{ \! \! \{ \mathrm{Id} (a), b, \mathrm{Id} (c) \} \! \! \}  + \{ \! \! \{ a, \mathrm{Id} (b), \mathrm{Id} (c) \} \! \! \}  \big),
    \end{align*}
    for $a, b, c \in D$. Hence $\mathrm{Id} : D \rightarrow D_\mathrm{Tot}$ is a relative Rota-Baxter operator. Finally, if the induced dendriform-Yamaguti structure is $(D, \prec', \succ', \{ ~, ~ , ~ \}'_{[1]},  \{ ~, ~ , ~ \}'_{[2]},  \{ ~, ~ , ~ \}'_{[3]},  \{ \! \! \{ ~, ~ , ~ \} \! \! \}'_{[1]},  \{ \! \! \{ ~, ~ , ~ \} \! \! \}'_{[2]},  \{ \! \! \{ ~, ~ , ~ \} \! \! \}'_{[3]})$ then
    \begin{align*}
        a \prec' b = a \cdot \mathrm{Id} (b) = a \prec b ~~~~ \text{ and } ~~~~ a \succ' b = \mathrm{Id}(a) \cdot b = a \succ b, \text{ for } a, b \in D.
    \end{align*}
    Similarly, $\{ ~, ~, ~ \}'_{[i]} = \{ ~, ~, ~ \}_{[i]}$ and $\{ \! \! \{ ~, ~, ~ \} \! \! \}'_{[i]} = \{ \! \! \{ ~, ~, ~ \} \! \! \}_{[i]}$, for all $i = 1,2,3$. Hence, the result follows.
\end{proof}

\noindent {\bf Concluding remarks and further discussions.} This paper introduces associative-Yamaguti algebras as the associative analogue of Lie-Yamaguti algebras. The $(2,3)$-cohomology group of an associative-Yamaguti algebra is also introduced, and applications to formal one-parameter deformations and abelian extensions are obtained. Finally, Yamaguti multiplications and dendriform-Yamaguti algebras are considered, and their relations with relative Rota-Baxter operators are discussed. Since the notion of an associative-Yamaguti algebra is closely connected to associative algebras, associative triple systems, diassociative algebras and Lie-Yamaguti algebras, this new algebraic structure opens several interesting questions. First, one may ask about the full cochain complex of an associative-Yamaguti algebra generalizing the $(2,3)$-cohomology group considered in this paper and the well-known cochain complex of an associative triple system \cite{carlsson}. Further, in all the identities of an associative-Yamaguti algebra, since the variables stay in the same order, it is worth noting to find a cup product and a graded Lie bracket in the corresponding cochain complex (generalizing the well-known cup product and the graded Lie bracket on the Hochschild cochain complex of an associative algebra). To our knowledge, the existence of a cup product or a graded Lie bracket on the complex of an associative triple system is not even known.

\medskip

The notion of associative-Yamaguti algebra introduced here also motivates us to consider a generalization of an associative triple system. Explicitly, we say a tuple $(A, \{ ~, ~, ~ \}, \{ \! \! \{ ~, ~, ~ \} \! \! \})$ consisting of a vector space $A$ with linear operations $\{ ~, ~, ~ \}, \{ \! \! \{ ~, ~, ~ \} \! \! \} : A^{\otimes 3} \rightarrow A$ a {\em weak associative triple system} if only the identities (\text{\ref{ay7}}) and (\text{\ref{ay9}}) hold. A diassociative algebra $(D, \dashv, \vdash)$ yields a {weak associative triple system}, where the operations $\{ ~, ~, ~ \}$ and $ \{ \! \! \{ ~, ~, ~ \} \! \! \}$ are given by (\ref{diass-assy2}), (\ref{diass-assy3}). On the other hand, a weak associative triple system $(A, \{ ~, ~, ~ \}, \{ \! \! \{ ~, ~, ~ \} \! \! \})$ gives rise to a diassociative algebra structure on the tensor product $A \otimes A$ with the operations
\begin{align*}
    (a \otimes b) \dashv (c \otimes d) =  a \otimes \{ b, c, d \} \quad \text{ and } \quad 
    (a \otimes b) \vdash (c \otimes d) =  \{ \! \! \{ a, b, c \} \! \! \} \otimes d,
\end{align*}
for $a \otimes b,~ \! c \otimes d \in A \otimes A$. We believe that a weak associative triple system is a much better algebraic structure than an associative triple system associated with diassociative algebras. Thus, at this place, one may also look for the cohomology of a weak associative triple system (generalizing the cohomology of \cite{carlsson}) and find its connection to the cohomology of the corresponding diassociative algebra.

\medskip

A traditional question is to find out a suitable group-like object whose differentiation yields a Lie-Yamaguti algebra structure. It is well-known that a Lie digroup \cite{kinyon-self} serves as the group-like object for Leibniz algebras. Since Leibniz algebras form subclasses of Lie-Yamaguti algebras, it is highly expected that Lie digroups form subclasses of the group-like object corresponding to Lie-Yamaguti algebras (which one may call {\em Lie-Yamaguti groups}). Although the description of the Lie-Yamaguti group is not in hand, we expect that some of the identities (\text{\ref{ay1}})-(\text{\ref{ay11}}) of an associative-Yamaguti algebra will appear in the definition of a Lie-Yamaguti group. This can be justified by the following diagrammatic observation:
\begin{align*}
    \xymatrix{
    \substack{\text{diassociative} \\ \text{algebra} } \ar[d]_{\mathrm{Theorem ~} \ref{diass-assy}} \ar[rr]^{\text{group-like}}_{\text{object}}&  & \text{Lie digroup} \ar@{-->}[d] \ar[rr]^{\text{differentiation}} & & \text{Leibniz algebra} \ar[d]^{\mathrm{Example ~} \ref{exam-leib-liey}} \\
    \substack{\text{associative-Yamaguti} \\ \text{algebra} } \ar[rr]^{\text{group-like}}_{\text{object}} & & \substack{??? \\ \text{Lie-Yamaguti group}} \ar[rr]_{\text{differentiation}} & & \substack{\text{Lie-Yamaguti} \\ \text{algebra}}.
    }
\end{align*}

\medskip

An associative-Yamaguti algebra $(A, ~ \! \cdot ~ \! , \{ ~, ~, ~ \} , \{ \! \! \{ ~, ~ , ~ \} \! \! \})$ is said to be {\em commutative} if $a \cdot b = b \cdot a$, $\{ a, b, c \} = \{ b, a, c \}$ and $\{ \! \! \{ a, b, c \} \! \! \} = \{ \! \! \{ a, c, b \} \! \! \}$, for all $a, b, c \in A$. The last two conditions can be equivalently understood by  $\sigma_{a, b} = \sigma_{b, a}$ and $\tau_{a, b} = \tau_{b, a}$, for all $a, b \in A$. In this regard, one may look for the appropriate notion of a Poisson-Yamaguti algebra. Precisely, it would be given by a commutative associative-Yamaguti algebra endowed with a Lie-Yamaguti algebra structure that satisfies certain compatibilities that need to be figured out. Further, one may find the possible connections between Poisson-Yamaguti algebras and Poisson-dialgebras as introduced by Loday \cite{loday} (in the same way Lie-Yamaguti algebras are related to Leibniz algebras, and associative-Yamaguti algebras are related to diassociative algebras).

\medskip

\noindent  {\bf Acknowledgements.} The author would like to thank the Department of Mathematics, IIT Kharagpur, for providing the beautiful academic atmosphere where the research has been conducted.

\medskip

\noindent {\bf Data Availability Statement.} Data sharing does not apply to this article as no new data were created or analyzed in this study.

\end{document}